\documentclass[final,12pt]{amsart}
\usepackage{amssymb, amsmath, amsthm,geometry}
\usepackage{mathrsfs}  

\usepackage{enumitem}

\usepackage{graphicx}
\usepackage[colorlinks=true, citecolor=blue, linkcolor=blue, urlcolor=blue]{hyperref}
\newgeometry{asymmetric, centering}
 \usepackage{xcolor}

\usepackage{ifdraft}
\ifoptionfinal{
\usepackage[disable]{todonotes}
}{
\usepackage[norefs, nocites]{refcheck}
\usepackage{graphicx}
\usepackage{epstopdf}

\usepackage[notref, notcite]{showkeys}
\usepackage[bordercolor=white, color=white]{todonotes}
}

\makeatletter\providecommand\@dotsep{5}\def\listtodoname{List of Todos}\def\listoftodos{\hypersetup{linkcolor=black}\@starttoc{tdo}\listtodoname\hypersetup{linkcolor=blue}}\makeatother
\allowdisplaybreaks[4]

\numberwithin{equation}{section}

\newtheorem{lemma}{Lemma}[section]
\newtheorem{proposition}{Proposition}[section]
\newtheorem{theorem}{Theorem}[section]

\newtheorem{definition}{Definition}[section]

\newtheorem{remark}{Remark}[section]

\theoremstyle{remark}

\newcommand{\bel}{\begin{equation} \label}
\newcommand{\ee}{\end{equation}}
\def\beq{\begin{equation}}
\def\eeq{\end{equation}}
\newcommand{\bea}{\begin{eqnarray}}
\newcommand{\eea}{\end{eqnarray}}
\newcommand{\beas}{\begin{eqnarray*}}
\newcommand{\eeas}{\end{eqnarray*}}

\def\R{\mathbb R}

\def\Z{\mathbb Z}
\def\N{\mathbb N}

\def\U{\mathcal U}

\def\Im{\mathrm{Im}}
\def\Char{\mathrm{Char}}
\def\WF{\mathrm{WF}}

\def\out{\mathrm{out}}

\def\sgn{\mathrm{sgn}}
\def\b{\backslash}
\def \into{\mathrm{in}}

\def \id{\mathrm{id}}

\def \comp{\mathrm{comp}}
\def \rem{\mathrm{rem}}
\def \fre{\mathrm{fre}}

\def\W{\mathcal W}

\def\D{\mathbb D}

\renewcommand{\leq}{\leqslant}

\def \e{\varepsilon}

\def \c{\boldsymbol}

\def\p{\partial}

\def\l{\left}
\def\r{\right}

\newcommand\rotom{\mho}

\DeclareMathOperator{\supp}{supp}

\date{Compiled \today}
\title%[A Semi-linear equation in Lorentzian Geometry]
[Inverse  problems of wave equations]{Unique inversion of smooth nonlinearity in semilinear wave equations on Lorentzian manifolds}

\author[X. Chen]{Xi Chen}
\address{Shanghai Center for Mathematical Sciences, Fudan University, Shanghai 200438, China;	School of Mathematical Sciences, Fudan University, Shanghai 200433, China;
	Center for Applied Mathematics, Fudan University, Shanghai 200433, China. }
\email{xi\_chen@fudan.edu.cn}
\author[S. Lu]{Shuai Lu}
\address{School of Mathematical Sciences, Fudan University, Shanghai 200433, China.}
\email{slu@fudan.edu.cn}
\author[R. Zhang]{Ruochong Zhang}
\address{School of Mathematical Sciences, Fudan University, Shanghai 200433, China.}
\email{22110180055@m.fudan.edu.cn}

\begin{document}
%\begin{abstract}
%\end{abstract}

\begin{abstract}
    We study the recovery of a smooth nonlinearity in semilinear wave equations on globally hyperbolic Lorentzian manifolds. Our approach combines higher-order linearization with distorted plane waves and Gaussian beams, using three-wave, four-wave, and m-wave interactions to recover different Taylor coefficients. 
\end{abstract}

\maketitle

\tableofcontents

\section{Introduction}

Consider  semilinear wave equations on a globally hyperbolic simple Lorentzian $4$-manifold $(M, g)$ with signature $(3, 1)$
\begin{align}\label{eqn : semilinear wave}  \Box_g u(x) + N(x, u(x)) = f(x)\end{align} with D'Alembertian $\Box_g$, nonlinearity $N $, source $f$, zero initial values and $x \in M$. Suppose that one can manipulate $f$ within a domain $ \mathcal{O}$ and observe the resulting waves in $\mathcal{O}$. The generic inverse problem of concern is whether the source-to-solution type local measurements uniquely determine the metric and the nonlinearity respectively. That is, \begin{align}\label{Q : g}
	\left\{\left(f^{(1)}, u^{(1)}|_{\mathcal{O}}\right)\right\} = \left\{\left(f^{(2)}, u^{(2)}|_{\mathcal{O}}\right)\right\}, N^{(1)} = N^{(2)} &\Longrightarrow g^{(1)} = g^{(2)} ?   \\
	\label{Q : N}
    \left\{\left(f^{(1)}, u^{(1)}|_{\mathcal{O}}\right)\right\} = \left\{\left(f^{(2)}, u^{(2)}|_{\mathcal{O}}\right)\right\}, g^{(1)} = g^{(2)} &\Longrightarrow   N^{(1)} = N^{(2)} ?
\end{align}

The time-dependence nature of  \eqref{eqn : semilinear wave} normally disables the classical unique continuation approach for inverse coefficient problems of hyperbolic equations with time-independent coefficients. 

In the landmark paper \cite{KLU}, Kurylev--Lassas--Uhlmann introduced the framework of higher order linearization, combined with the calculus of distorted plane waves, to recover $g$ from quadratic wave equations \begin{align}\label{eqn : quadratic wave equation}\Box_g u(x) +   u(x)^2 = f(x)\end{align} on $(M, g)$. The core of this methodology is to approach the nonlinear scattering waves by products of linear waves and extract the coefficients from the visible scattering data of such multilinear approximations. From this viewpoint, the metric $g$ addresses the propagation of linear waves, while the nonlinearity $N$ encodes the nonlinear scattering of linear waves.

Since the metric $g(x, dx)$ appears in a concrete format (i.e. a quadratic form in $dx$), question \eqref{Q : g} is, to some extent, well-understood via higher order linearization. See e.g. \cite{KLU, KLOU, LUW, Uhlmann-Wang-CPAM, FLO} for recovery of $g$ in various models. 

In contrast, the nonlinearity $N(x, u)$, in general, does not take on a specific form in $u$. If the nonlinearity is assumed to be a special power series  
\begin{align} \label{eqn : higher degree nonlinearity without potential}  h_2(x) u^2 + h_3(x) u^3 + h_4(x) u^4 + \cdots\end{align} with $h_k(x) \in C^\infty(M;\R)$ and $ \sum_{k=2}^\infty |h_k(x)| \neq 0$, \cite{LUW} showed that $N$ can be uniquely recovered from source-to-solution type local information. It was extended by \cite{Hintz-Uhlmann-Zhai-IMRN, Uhlmann-Zhang} to boundary value problems with Dirichlet-to-Neumann type local data. 

In this paper, we shall give a positive answer to question \eqref{Q : N} with source-to-solution type local data for more general nonlinearity $N\in C^\infty(M \times \mathbb{C})$ such that \begin{enumerate}
	\item[(i)]\label{i} $N(x, u)$ is smooth in $x$;  
	\item[(ii)]\label{ii} for any $x$, $N(x, \cdot)$ is not a linear function; 
	\item[(iii)]\label{iii} $N(x, u)$ is homogeneous in $u$;  
	\item[(iv)]\label{iv} $N(x, u)$ is analytic in $u$, and $N(x, u) \in \R$ if $u\in \R$.
\end{enumerate}

\subsection{Lorentzian geometry}

To present the geometric setup, we follow the standard notations in Lorentzian geometry from \cite{Beem-Ehrlich-Easley, O}.
\begin{itemize}
	\item Denote the local coordinates on $M$ by $$x := (x^0,x^{1},x^{2},x^{3}) = (x^{0},x^{\prime}).$$  
	\item For any $p,q \in M$,   $p < q$ says that there exists a future-pointing causal curve from $p$ to $q$, and $p\le q$ means that either $p<q$ or $p=q$ holds. 
	\item For any $p,q \in M$,    $p \ll q$ says that there exists a future-pointing time-like curve from $p$ to $q$.
	\item For any $p\in M$ and $A\subseteq M$,   the causal future  and causal past of $p$ and $A$ are defined respectively to be
\begin{align*}
 J^{+}(p) &:= \{q\in M; p\le q\},\\ J^{-}(p) &:= \{q\in M; q\le p\}, \\ J^{\pm}(A) &:= \bigcup_{p\in A} J^{\pm}(p).
\end{align*}
\item For any $p\in M$ and $A\subseteq M$,  the chronological future and chronological past of $p$ and $A$ are  defined respectively to be
\begin{align*}
  I^{+}(p) &=\{q\in M; p \ll q\},\\ I^{-}(p) &= \{q\in M; q\ll p\},\\   I^{\pm}(A) &:= \bigcup_{p\in A}I^{\pm}(p).
\end{align*}
 
\end{itemize}

In the sense of \cite{Bernal-Sanchez-2007}, Lorentzian manifold $(M,g)$ is said to be \textit{globally hyperbolic} if
\begin{itemize}
     \item no closed causal curves exist on $M$;
     \item for all $p,q\in M$, the causal diamond \begin{align}\label{eqn: causal diamond} \mathbb{D} := J^{+}(p) \cap J^{-}(q)\end{align} is compact.
\end{itemize}
Such condition implies that $(M, g)$ admits a global $1+3$ splitting in ‘time’ and ‘space’. Namely, $(M,g)$ is isometric to $\R\times M'$ furnished with metric 
$$
      -\beta(x^{0},x^{\prime}) dx^0 \otimes dx^0 + g'(x^{0},x^{\prime}, dx'),\quad \text{for all  }x^{0}\in \R,x^{\prime}\in M',
$$
where $\beta$ is a smooth positive function and $g'$ is a Riemannian metric on $M'$ smoothly dependent on $x^0$. For each fixed $x^0$, the hypersurface $\{x^0\}\times M'$ is a Cauchy hypersurface which any inextendible causal curve intersects at most once. The initial value of a wave equation is usually defined on such Cauchy hypersurfaces.

\subsection{The wave equation and local measurements}
The D'Alembertian operator $\Box_g$ is defined by 
\[ (\Box_g u, v)_{L^2(M)} := (\nabla_g u, \nabla_g v)_{L^2(M)}, \quad \forall u, v \in C^\infty_c(M).\]  In local coordinates $(x^0, x^1, x^2, x^3)$, it takes the form
$$
\Box_g  = -|\det g|^{-1/2}\partial_{x^j}\l(|\det g|g^{jk}\partial_{x^k}\r),
$$
 where $(g^{jk})$ denotes the inverse matrix of $(g_{jk})$.

 Taking advantage of global hyperbolicity, we are allowed to endow \eqref{eqn : semilinear wave} with an initial value explicitly as follows. If $T > 0$, the underlying Cauchy problem, in the domain $(0, T) \times M' \subset M$, is  
\begin{equation} \label{semi_linear_wave}
  \begin{aligned}
    \begin{cases}
 \Box_g u(x)  + N(x,u(x)) = f(x),\quad & x\in (0,T)\times M';\\
   u(0,x') = 0,\partial_{x^0}u(0,x') = 0,\quad & x'\in M'.
  \end{cases}
      \end{aligned}
\end{equation}

Assume we can arbitrarily make the sources and measure the solutions of \eqref{semi_linear_wave} in a small open ball  $B  \subset  (  M',g'(0,\cdot))$ within time $T$. Here $|B|$ can be arbitrarily small. From the perspective of timespace, this amounts to measuring the source-to-solution type data of \eqref{semi_linear_wave}  in the region  
\begin{equation}\label{eqn: observation region mho}
	\rotom := (0,T)\times B \subset M.
\end{equation}

The forward theory (e.g. \cite[Appendix III, Theorem 3.7 and 4.2]{Choquet-Bruhat}, \cite[Theorem III]{Hughes-Kato-Marsden-1976}, \cite{K}, \cite[Theorem 6.3.1]{Rauch-book}) yields the local measurements \eqref{semi_linear_wave}  in $(0, T) \times M'$.  The sources useful for recovery lie in the set
$$
 \mathscr{C}^k_{\mho, \varepsilon} := \left\{ h\in H^{k}(M);k\ge4 ,\supp h \subseteq \mho,  \Vert h\Vert_{H^{k}(M)}\le \varepsilon\right\},
$$
where $\varepsilon$ is a sufficiently small constant depending on $(M,g)$ and $T$. Then, for any $f\in \mathscr{C}^k_{\mho, \varepsilon}$ the Cauchy problem (\ref{semi_linear_wave}) admits a unique solution
$$
   u \in  H^{k+1}((0,T)\times M'). 
$$
It follows a well-defined source-to-solution map 
  \begin{align}\label{eqn : StS}
  L_{N}:  \mathscr{C}^k_{\mho, \varepsilon} &\longrightarrow H^{k+1}(\rotom) ,\\ 
   f &\longmapsto u|_{\rotom}. \notag
  \end{align}

 It is worth pointing out that the data encoded in the source-to-solution map \eqref{eqn : StS} is fewer than the full Dirichlet-to-Neumann type boundary data. If the underlying domain is geodesically convex, any pair of a source point and its receiver point can be connected by a geodesic in the full boundary measurements. However, this is not applicable in our model with measurement \eqref{eqn : StS} as the size $|B|$ of the observation region can be arbitrarily small.

\subsection{Unique inversion of nonlinearity} The precise statement of the main result is the following.

\begin{theorem}\label{thm: main theorem}
  Fix a Lorentzian manifold $(M, g)$ and the measurement region $\mho$ as in \eqref{eqn: observation region mho}. 
  Assume that there are no cut points  along any null geodesic within the causal diamond 	$$\D := J^{+}(\rotom) \cap J^{-}(\rotom).$$ Let $N^{(1)}$ and $N^{(2)}$ be two functions  which both satisfy conditions (i)-(iv) and agree in $\mho$.
     If their source-to-solution maps $L_{N^{(j)}}$ for $j = 1, 2$, defined by \eqref{eqn : StS}, agree in $\mho$, then they must be the same pointwisely in  	$\D $. That is, 
  \[   L_{N^{(1)}} = L_{N^{(2)}} \quad \Longrightarrow \quad   N^{(1)} = N^{(2)} \, \mbox{pointwisely in $\D$}.  \]

\end{theorem}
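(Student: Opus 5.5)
Write $N^{(j)}(x,u)=\sum_{k\ge 1} h^{(j)}_k(x)u^k$, which is legitimate because of (iii)--(iv). Homogeneity in $u$ means $N(x,0)=0$, so $u\equiv0$ is the background solution; we read the series as a Taylor expansion at $u=0$. Condition (iv) makes every $h_k^{(j)}$ real and smooth in $x$.

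Analyticity in $u$ reduces the theorem to showing $h^{(1)}_k=h^{(2)}_k$ on $\D$ for every $k\ge1$. Condition (ii) guarantees that some $h_k$ with $k\ge2$ is nonzero at each point, and this is what lets us fix the metric-independent normalisation. We use the higher-order linearization of \cite{KLU,LUW}. Take sources $f=\sum_{l=1}^m \epsilon_l f_l$ with $f_l\in\mathscr C^k_{\mho,\varepsilon}$. Then $\partial_{\epsilon_1}\cdots\partial_{\epsilon_m}u|_{\epsilon=0}$ restricted to $\mho$ is determined by $L_N$, and it is an explicit multilinear expression in the linear waves $v_l=Q_g f_l$. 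Here $Q_g$ is the causal inverse of $\Box_g+h_1$.

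\textbf{Step 1: the linear coefficient $h_1$.} The first linearization gives the source-to-solution map of $\Box_g+h_1$ on $\mho$. With $g$ known, I would recover $h_1$ on $\D$ using Gaussian beams along null geodesics leaving and returning to $\mho$. The leading amplitude picks up $\exp(\int h_1)$ along the null ray, so the data give light-ray transforms of $h_1^{(1)}-h_1^{(2)}$. Inverting that light-ray transform under the no-cut-point assumption in $\D$ should give $h_1^{(1)}=h_1^{(2)}$, and hence $Q_{g}$ is common to both problems.

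\textbf{Step 2: induction on $k$.} Assume $h_j^{(1)}=h_j^{(2)}$ for all $j<k$. Then the $k$-th order linearization differs between the two problems only by the term
\[
Q_g\big((h^{(1)}_k-h^{(2)}_k)\,v_1\cdots v_k\big),
\]
while all lower-order cascaded terms coincide. Integrating this against a backward linear wave $v_0$ supported near $\mho$ gives
\[
\int_M (h^{(1)}_k-h^{(2)}_k)\, v_0v_1\cdots v_k\,dV_g=0 .
\]

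\textbf{Step 3: pointwise recovery.} Fix $q\in\D$. For $k=2$ and $k=3$, i.e. three- and four-wave interactions, I would use distorted plane waves conormal to null hypersurfaces intersecting at $q$, following \cite{KLU,LUW}. The principal symbol of the new singularity produced at $q$ and propagating back to $\mho$ is a nonzero multiple of $h_k(q)$ times a product of the amplitudes. Its vanishing therefore forces $h_k^{(1)}(q)=h_k^{(2)}(q)$.

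For general $m=k+1$ I would instead use $m$ Gaussian beams with phases $\phi_l$ such that $\sum\phi_l$ has a nondegenerate critical point at $q$. Stationary phase in the integral identity then gives $C\,\rho^{-2}(h^{(1)}_k-h^{(2)}_k)(q)+O(\rho^{-2-1/2})$, where $\rho$ is the beam frequency. This requires $m$ null covectors at $q$ summing to zero with suitable imaginary parts. Such covectors exist for $m\ge3$ in dimension $4$, after choosing directions whose geodesics meet $\mho$ in the past and future, which is possible inside $\D$.

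\textbf{Main obstacle.} The hardest step is the stationary-phase/symbol computation: checking that the leading coefficient $C$ is genuinely nonzero and that beams or plane waves can be arranged to meet only at $q$ inside $\D$, with no cut points spoiling the geometry. I would attack this first with the explicit Gaussian beam ansatz in Fermi coordinates around each null geodesic, choosing the Hessian parameters so that the imaginary part of the total phase is positive-definite transversally. Having all $h_k$ equal on $\D$, analyticity (iv) gives $N^{(1)}=N^{(2)}$ on $\D$.
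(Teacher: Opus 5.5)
Your proposal has a genuine gap in Step~1, and Steps~2--3 rely on it, since your induction needs $Q_g$, i.e.\ $h_1$, to be the same for both problems.

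\textbf{Step 1 cannot recover $h_1$.} Reading a light-ray transform of $h_1$ at $q\in\D\setminus\mho$ from the first linearization requires null geodesics through $q$ that start \emph{and end} in $\mho$. For the thin cylinder $\mho=(0,T)\times B$ there are none.
\begin{itemize}
\item In Minkowski space, a null line leaving the cylinder never comes back.
\item In general, suppose a null geodesic from $x\in\mho$ passed through a point $q$ away from $\mho$ and re-entered $\mho$ at $z$. The ray spends a definite time outside the small cylinder, so $x$ and $z$ can be joined by a timelike curve inside it, and $z\in I^{+}(x)$. The geodesic would then contain a null cut point lying in $J^{+}(\mho)\cap J^{-}(\mho)=\D$, which is exactly what the theorem's hypothesis excludes.
\end{itemize}
Two further problems compound this. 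Injectivity of the light-ray transform on a general globally hyperbolic manifold is not known. Also, a zeroth-order potential does not enter the leading Gaussian-beam amplitude at all, only the $\lambda^{-1}$ correction $-\frac{\imath}{2}\int h_1$.

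\textbf{The missing idea is to let the nonlinearity break the ray at the point.} The paper reads truncated broken-ray integrals of $h_1$ off lower-order data of nonlinear interactions at $\c{y}$, then differentiates in $\c{y}$.
\begin{itemize}
\item The subprincipal symbol of the three-fold linearized wave picks up $\int_0^{s_{\out}}h_1(\gamma_{\out}(s))\,ds$ along the returning ray (Lemma~\ref{lemma: microlocal structure of the higher order remiander wave of three fold-linearization}, Proposition~\ref{prop: principal symbol of v_123^rem}).
\item In the $m$-fold interaction, the $\lambda^{-1}$ term of a Gaussian beam evaluated at $\c{y}$ gives $\int h_1$ along the incoming ray.
\end{itemize}
This only works where some $h_k$ with $k\ge 2$ is non-zero. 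One must also know which $h_k$ is the first one not vanishing near $\c{y}$, because that coefficient produces the leading singularity. Hence the paper's interleaved order:
\begin{itemize}
\item first $h_3$, and $h_2^2$ off $\supp h_3$, from principal symbols, which do not see $h_1$;
\item then $h_1$ on $\supp h_3$, on $\supp h_2\setminus\supp h_3$, and on the strata $S_m$, $m\ge4$ (together with $\supp h_2\cup\supp h_3$ these exhaust $\D\setminus\mho$ precisely because of (ii), which is the real role of that condition);
\item only afterwards $h_2$, $h_4$ and $h_m$, $m\ge5$.
\end{itemize}

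\textbf{Your $k=2$ step is also wrong as stated.} The second linearization $-2Q(h_2v_1v_2)$ of two distorted plane waves creates no new singularity propagating back to $\mho$, because the sum of two independent null covectors is not null. Moreover, if $\xi_0+\xi_1+\xi_2=0$ with all $\xi_j$ null, then $0=\langle\xi_1+\xi_2,\xi_1+\xi_2\rangle_g=2\langle\xi_1,\xi_2\rangle_g$ forces $\xi_1\parallel\xi_2$. So your claim that suitable null covectors exist ``for $m\ge 3$'' fails at $m=3$, and the stationary phase degenerates there. The quadratic coefficient has to come from higher interactions: $h_2^2$ from the three-fold wave where $h_3\equiv 0$ near $\c{y}$, and $h_2h_3$ or $h_2^3$ from the four-fold wave after subtracting the $h_4$ contribution (Proposition~\ref{prop: recovery of h_2 and h_4}). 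This again presupposes $h_1$.

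Once $h_1$ is known, your Gaussian-beam integral identity for $k\ge 3$ is a legitimate alternative, essentially the Hintz--Uhlmann--Zhai argument, to the paper's principal-symbol computation with three plane waves and $m-3$ beams. The real difficulty is therefore $h_1$, not the stationary-phase constant you flagged as the main obstacle.
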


\begin{remark}The causal diamond $\D$ is the maximal area where one can detect $N$ from the measurements in $\mho$. This is a consequence of the finite speed of propagation of waves.\end{remark}

\begin{remark}
The constrains (i)-(iv) on $N$ seem to be inevitable for the framework of higher order linearization. 
\begin{itemize}
	\item Such methodology is based on the microlocal analysis of distorted plane waves and semilinear wave equations. Therefore, the coefficients have to be smooth.
	\item If $N(x, u)$ is linear in $u$ at some $x$, then no nonlinear scattering will occur at $x$ such that no information from $x$ will be received. So we are only able to reconstruct $N(x, u)$ which is nonlinear in $u$ everywhere in the model.  
	\item The wave equations from physical models are often the Euler-Lagrange equations derived through calculus of variation. Hence it is legitimate to assume that $N(x, u)$ is homogeneous in $u$. 
	\item In the higher order linearization scheme, nonlinear waves are approximated by multiple linear waves. 	It is thus necessary to assume that $N(x, u)$ has product features in $u$. Here comes the analyticity in $u$.
\end{itemize} 
\end{remark}

\subsection{Methodologies} 

The classical tools for inverse coefficient problems of wave equations are the boundary control method \cite{Belishev,BK} and   the unique continuation principle \cite{T}. However, it does not, in general, apply to wave equations with time-dependent coefficients, as Alinhac's counterexample \cite{A} suggested. 

For nonlinear wave equations with time-dependent coefficients,   \cite{KLU} developed the framework of higher order linearization. The general philosophy is to exploit the nonlinear scattering of the underlying equation, through  higher order linearized equations, and then  the coefficients are reconstructed by the data of the solution operators of these linearized equations. 

Specifically, \cite{KLU} introduced the four wave linearization to solve the inverse problem of quadratic wave equations \eqref{eqn : quadratic wave equation}. Four linear distorted plane waves are designed to interact in the region to be retrieved, and then the interaction prompts a scattered wave returning back to the receiver. Since the dimension of the Lorentzian manifold is $4$, four linearly independent waves can deliver the scattered waves along any light-like direction. 

The process of linearization can be simplified for wave equations with cubic nonlinearity. \cite{CLOP} observed that three linear distorted plane waves are sufficient to produce such a returning wave. Consequently, the three wave linearization enables us to solve inverse coefficient problems of wave equations with cubic nonlinearity, as the cubic structure naturally accommodates three wave interactions. Along this line, \cite{FO, FLO} used three Gaussian beams, instead of distorted plane waves, to detect the second order and zeroth order terms. However, this simplified linearization method breaks down for quadratic equations \eqref{eqn : quadratic wave equation}, due to the structure of null vectors.

For wave equations with $m$-th degree nonlinearity, it is natural to perform a linearization scheme with $m$ waves. \cite{LUW} considered wave equations with special nonlinearity \eqref{eqn : higher degree nonlinearity without potential}.  The linearization approach with $m$ distorted plane waves was employed to reconstruct the $m$-th coefficient $h_m(x)$ in \eqref{eqn : higher degree nonlinearity without potential}.  Then  \cite{Uhlmann-Zhang}  generalized this result to some quasi-linear cases. However, this approach does not seem to work when the first degree term $h_1$ is present. On the other hand, \cite{Hintz-Uhlmann-Zhai-IMRN} used $m$ Gaussian beam linearization to recover \eqref{eqn : higher degree nonlinearity without potential}. Nonetheless, this strategy is unable to deliver the second degree term $h_2$. Recently, \cite{Lassas-Liimatainen-Pohjola-Tyni-2025} considered the inverse source problem of nonlinear hyperbolic equations, together with the recovery of zero-th order terms and quadratic nonlinear terms.

Furthermore, \cite{Lin-Liu-Liu-2024,Qiu-Xu-Ye-Zhou-2025} investigated \eqref{Q : N} with nonlinearity of the form \eqref{eqn : higher degree nonlinearity without potential} on Minkowski space-time. They proved the uniqueness of recovery from the full Dirichlet-to-Neumann type data. In this case, the recovery of $h_1$ is from the light-ray transform and the recovery of $h_2$ is from two-fold linearization. However,  this framework   does not apply to Theorem \ref{thm: main theorem}. On one hand, the invertibility of the light-ray transform on a globally hyperbolic Lorentzian manifold is unknown.  On the other hand,   the  source point and and its   receiver point are not connected by a light-like geodesic in the model of Theorem \ref{thm: main theorem} as we mentioned above. In contrast, we have to construct suitable broken light-like geodesics to connect them, and recover $h_1$ via the corresponding (truncated) broken light-ray transforms as in \cite{CLOP, FO}.

In summary, all of the existing linearization schemes have limitations such that it only recovers the coefficients of some specific forms or from the full boundary measurements on Minkowski space-time.

To overcome these challenges,  we propose a hybrid linearization program with both distorted plane waves and Gaussian beams. The specific steps are the following.

\subsubsection*{Reduction to nonlinearity of power series type}
	
 Making use of the analyticity of $N(x, u)$ in $u$, we expand $N(x, u)$ as a power series in $u$ near $0$, i.e. \[N(x, u) = h_1(x) u + h_2(x) u^2 + \cdots + h_m(x) u^m + \cdots\] when $u$ lies in a small disc $\mathcal{B} \subset \mathbb{C}$ centred at $0$. Then we shall first consider \begin{align}
		\label{eqn : semilinear wave with power series} \Box_g u + \sum_{k = 1}^\infty h_k(x) u^k = f,
	\end{align} and show the following simpler version of the main theorem.

  	\begin{theorem}\label{thm : simplified theorem}
		Let $(M, g)$, $\D$ and $\mho$ be as in Theorem \ref{thm: main theorem}. 	
		Suppose, in addition,  that there are  $h_m^{(j)} \in C^\infty(\D)$ for $j = 1, 2$ and $m \in \Z_+$ such that $h_m^{(1)} = h_m^{(2)}$ in $\mho$ and \[N^{(j)}(x, u) = h_1^{(j)}(x) u + h_2^{(j)}(x) u^2 + \cdots + h_m^{(j)}(x) u^m + \cdots, \] and   $L_{N^{(j)}}$ are defined by  \eqref{eqn : StS} accordingly. Then  there holds
		\[   L_{N^{(1)}} = L_{N^{(2)}} \quad \Longrightarrow \quad   N^{(1)} = N^{(2)} \quad \mbox{pointwisely in $\D$}.  \]
		
	\end{theorem}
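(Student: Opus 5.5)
Since each $N^{(j)}$ is a convergent power series in $u$ with smooth coefficients, Theorem~\ref{thm : simplified theorem} reduces to showing $h_m^{(1)} = h_m^{(2)}$ in $\D$ for every $m \ge 1$. The plan is an induction in $m$, driven by higher order linearization of $L_{N^{(j)}}$. Take sources $f = \sum_{l=1}^{m} \epsilon_l f_l$ with $f_l \in \mathscr{C}^k_{\mho,\varepsilon}$. Differentiate the solution $u^{(j)}$ in $\epsilon_1,\dots,\epsilon_m$ at $\epsilon = 0$. This gives:
\begin{itemize}
\item the linear waves $v_l$, solving $(\Box_g + h_1^{(j)}) v_l = f_l$;
\item the $m$-th order linearization $\mathcal U^{(j)}_m$, solving $(\Box_g + h_1^{(j)})\,\mathcal U_m^{(j)} = -m!\, h_m^{(j)} v_1\cdots v_m + R_m^{(j)}$.
\end{itemize}
Here $R_m^{(j)}$ is built only from $h_1^{(j)},\dots,h_{m-1}^{(j)}$ and lower-order linearizations. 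Equality of the source-to-solution maps gives $\mathcal U_m^{(1)} = \mathcal U_m^{(2)}$ on $\mho$ for every choice of the $f_l$.

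\textbf{Step 1: recover $h_1$.} I would first use the second linearization, or the third if the quadratic structure is degenerate.
\begin{itemize}
\item Choose Gaussian beams $v_l$ concentrated along null geodesics leaving $\mho$ and meeting at a point $q \in \D$. Their product scatters along a null geodesic back to $\mho$.
\item Pair against a backward Gaussian beam solution of the adjoint equation $(\Box_g + h_1)w = 0$, supported near a returning null geodesic.
\item Integrate by parts. The identity $\mathcal U^{(1)} = \mathcal U^{(2)}$ on $\mho$ turns into an identity involving the amplitudes, which contain $\exp\!\big(\int h_1\big)$ along the beams.
\end{itemize}
Following \cite{CLOP, FO}, this yields equality of the truncated broken light-ray transforms of $h_1^{(1)}$ and $h_1^{(2)}$, with broken null geodesics starting and ending in $\mho$. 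Here the no-cut-point hypothesis guarantees that the phases are well behaved. Varying the break point over $\D$ and inverting these broken transforms gives $h_1^{(1)} = h_1^{(2)}$. I would use the layer-stripping argument of \cite{FO}, working from near $\mho$ outward along causal order.

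\textbf{Step 2: recover $h_m$ for $m \ge 2$.} With $h_1,\dots,h_{m-1}$ already known to agree, the remainders $R_m^{(1)}$ and $R_m^{(2)}$ coincide. Subtracting the two equations and pairing with an adjoint solution $w$ then gives
\[ \int_M \big(h_m^{(1)} - h_m^{(2)}\big)\, v_1 \cdots v_m\, w \, dV_g = 0 \]
for all admissible linear solutions.
\begin{itemize}
\item For $m \ge 3$, take $m$ Gaussian beams, or $m-3$ smooth background waves and three beams as in \cite{Hintz-Uhlmann-Zhai-IMRN}, whose phases cancel at $q$. Stationary phase gives the value of $h_m^{(1)} - h_m^{(2)}$ at $q$ times a nonzero constant.
\item For $m = 2$, three null vectors cannot be balanced in the phase, which is the known obstruction. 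Here I would use the four-wave distorted plane wave construction of \cite{KLU, LUW}. Then $h_2$ appears as the leading principal symbol of a conormal singularity produced at $q$ and observed in $\mho$, and a nonvanishing symbol computation isolates $h_2(q)$. The waves must now be constructed with the known potential $h_1$, which only changes the transport amplitudes by nonvanishing factors.
\end{itemize}

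\textbf{Main obstacle.} I expect the hardest part to be Step 1. It requires showing that the broken light-ray data determine $h_1$ on all of $\D$ using only the small set $\mho$, together with a careful account of how the $h_1$-dependent transport factors propagate into the higher-order steps.
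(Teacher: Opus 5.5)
There is a genuine gap in Step~1, and it makes the ordering of Step~2 circular. Information on $h_1$ near a point $q$ is carried only by a nonlinear interaction at $q$, and it always appears multiplied by the leading coefficient of that interaction.

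\begin{itemize}
\item The $2$-fold linearization produces no broken rays at all. This is the obstruction you note yourself in Step~2: two non-collinear null covectors sum to a non-null one.
\item The $3$-fold linearization sees $h_1(q)$ only through $h_3(q)$. When $h_3\equiv 0$ near $q$, it sees $h_1(q)$ only through $h_2(q)^2$, via the nested term $h_2u_{(i)}Q(h_2u_{(j)}u_{(k)})$.
\end{itemize}

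Hypothesis (ii) guarantees only that \emph{some} $h_k(q)\neq 0$ with $k\ge 2$. Suppose $h_2,h_3$ vanish identically near $q$ but, say, $h_5(q)\neq 0$. Then your Step~1 gives nothing at $q$. Recovering $h_1$ there by inverting a broken light-ray transform over such regions is exactly what is unavailable on a general globally hyperbolic manifold with small $\mho$. Yet Step~2 assumes $h_1$ is known on all of $\D$ before any $h_m$ with $m\ge2$ is treated, both to get $R_m^{(1)}=R_m^{(2)}$ and to build the probing waves.

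The paper resolves this as follows.
\begin{itemize}
\item It stratifies $\D\setminus\mho$ into $S_3=\supp h_3$, $S_2=\supp h_2\setminus S_3$, and $S_m=\supp h_m\setminus(S_2\cup\dots\cup S_{m-1})$ for $m\ge4$.
\item It shows $S_m^{(1)}=S_m^{(2)}$.
\item At $q\in S_m$ with $m\ge4$, it uses an $m$-fold linearization with three distorted plane waves and $m-3$ Gaussian beams $u_\lambda$.
\end{itemize}
The key point is microlocal. Because $h_2,\dots,h_{m-1}\equiv 0$ near $q$, the unknown lower-order sources $f_k$ produce no singularity on the returning bicharacteristic $\Gamma$ (Proposition~\ref{prop: popagation of regularity}), so they never need to be known. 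The principal symbol at $(\c{z},\c{\zeta})$ is an $h_1$-independent nonzero factor times $h_m(q)\,u_\lambda(q)^{m-3}$. Then $h_1(q)$ is read off from the $\lambda^{-1}$ term $-\tfrac{\imath}{2}\int_{\gamma_{(1)}}h_1$ in \eqref{eqn: expansion of Gaussian beams at y}, followed by differentiation along the geodesic; no ray-transform inversion is needed. Note also that $h_1$ enters the beam amplitudes only at order $\lambda^{-1}$, not as a factor $\exp(\int h_1)$ in the leading amplitude.

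There is a second circularity in Step~2, at $m=2,3$. In the $4$-wave construction $h_2$ is not the leading symbol. The principal symbol of $u_{(1234)}$ is governed by $h_4$, and $h_2$ appears only in the lower-order part $\mathcal W_{(1234)}$: as $h_2h_3$ where $h_3\neq0$, and as $h_2^3$ where $h_3\equiv0$. So $h_2$ can be isolated only after $h_3$ and $h_4$ are known. Meanwhile, your $m=3$ integral identity needs $R_3$, hence $h_2$.

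The paper breaks this circle using the hierarchy of symbol orders:
\begin{itemize}
\item $h_3$ and $h_4$ are read off from the leading symbols of the $3$- and $4$-fold linearizations, which are independent of $h_1$ and of all lower coefficients;
\item only then is $\mathcal V_{(1234)}$ subtracted to extract $h_2$;
\item finally, $h_m$ for $m\ge5$ follows by induction with $R_m$ known.
\end{itemize}
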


	\subsubsection*{The $3$-wave linearization }
	
  We perform   the linearization on \eqref{eqn : semilinear wave with power series} with $3$ distorted plane waves which delivers the linearized system \eqref{one-fold linearization}-\eqref{three-fold linearization}. On one hand, the information of the leading terms recovers $h_3$ in $\D$ as well as $h_2^2$ away from $\supp h_3$.  On the other hand, the data of the subleading terms  determines $h_1$ on  $\supp h_3$  and  $\supp h_2 \setminus \supp h_3$ respectively. Compared with the analogous analysis for the flat case in \cite{Chen-Lu-Zhang-2025}, the curved geometry of $(M, g)$ makes the three-fold linearized waves no longer distorted plane waves but Lagrangian distributions with non-trivial Keller--Maslov bundle. As such, we remedy this issue by analysing the principal symbols with non-trivial Keller--Maslov bundle.     
 See Proposition \ref{prop : 3 wave}.
 
\subsubsection*{The $m$-wave linearization}
For $y \in  \D \setminus  (\supp h_2 \cup \supp h_3)$, assume $h_m(y) \neq 0$ and $h_j(y) = 0$ for $j = 2, \cdots, m-1$.  To reconstruct the coefficients at $y$, we perform linearization with  $3$ distorted plane waves and $m-3$ Gaussian beams. This gives a system in the form of \eqref{eqn: l-fold linearization}.
Analogous with $3$-wave linearization, the leading terms of \eqref{m-fold linearization} likewise yield $h_m$ in $\D \setminus   (\supp h_2 \cup \supp h_3)$, while the subleading terms determine $h_1$ on  $\supp h_m\setminus   (\supp h_2 \cup \supp h_3)$.    If we go through all $y \in  \D \setminus  (\supp h_2 \cup \supp h_3)$, $h_1$ will be fully obtained.
See Proposition \ref{prop: uniqueness of $V$ via higher order nonlinearity}.	
 
\subsubsection*{The $4$-wave linearization}

 Next, we perform  the linearization on \eqref{eqn : semilinear wave with power series} with $4$ distorted plane waves, which, along with the knowledge of $h_1$, leads to the complete retrieval of $h_4$ and $h_2$ respectively. See Proposition \ref{prop: recovery of h_2 and h_4}.

\subsubsection*{The $m$-wave linearization (revisited)}
  Moreover, we perform  again the linearization on \eqref{eqn : semilinear wave with power series} with $3$ distorted plane waves and $m-3$ Gaussian beams, which, together with the knowledge of $h_1$, concludes the reconstruction of $h_m$ for $m > 4$.  See Proposition \ref{prop: recovery of higher order nonlinearities}.

\subsubsection*{Analytic continuation}The propositions in Steps 2-5 together prove Theorem \ref{thm : simplified theorem}. In the end, analytic continuation extends $N(x, \cdot)$ from $\mathcal{B}$ to $\mathbb{R}$, which completes the proof of Theorem \ref{thm: main theorem}.

\bigskip

In summary, we lay out the following roadmap for the inversion of $N(x,u)$.

\begin{figure}[htbp]
    % 预定义开始结束圆角矩形的样式
    \tikzstyle{startstop} = [rectangle, rounded corners=0.5cm, thick, minimum width = 3cm, minimum height=1cm,text centered, draw = black, ]
    % 预定义流程步骤方框的样式
    \tikzstyle{process} = [rectangle, thick, minimum width=5cm, minimum height=1.5cm, text centered, text width = 5cm, inner sep = 8pt, draw = black,]
    % 预定义连线样式
    \tikzstyle{arrow} = [thick,->,>=latex]
    \centering
    \begin{tikzpicture}[node distance=3cm]
        \node (m1) [process] {Reduction to nonlinearity of power series type};
        \node (m2) [process, right = 40pt of m1] {3-wave to recover $h_3$ and $h_1$ on $\supp(h_2) \cup\supp(h_3)$};
        \node (m3) [process, below = 30pt of m1] {4-wave to recover $h_2$ and $h_4$};
        \node (m4) [process, below = 30pt of m2] {$m$-wave to recover $h_1$ away from $\supp(h_2) \cup\supp(h_3)$};
        \node (m5) [process, below = 30pt of m3] {$m$-wave revisited to recover $h_m$ for $m>4$};
        \node (m6) [process, below = 30pt of m4] {Analytic continuation to recover $N$ from $h_m$};
%4-wave to recover $h_2$ and $h_4$
      \draw[arrow](m1) -- (m2);
      \draw[arrow](m2) -- (m4);
      \draw[arrow](m4) -- (m3);
      \draw[arrow](m3) -- (m5);
      \draw[arrow](m5) -- (m6);
    \end{tikzpicture}
\end{figure}

One might be puzzled why we mix up the inversion of all $h_m$, rather than reconstruct $h_m$ in an orderly fashion as it goes in \cite{LUW, Hintz-Uhlmann-Zhai-IMRN, Uhlmann-Zhang}. 
 In fact, the complication to determine the nonlinearity $N(x, u)$ in \eqref{eqn : semilinear wave} is two-fold. 
\begin{itemize}

	\item The presence of $h_1$  severely entangles the structure of the product terms in the linearized equations, no matter what strategy is adopted. Therefore, it significantly complicates the singularity analysis. In particular, $h_1$ appears in the linear wave equation, which means it is present in any linearized equation. As such, one cannot fully determine $h_2$ and $h_k$ for $k > 3$ without the knowledge of $h_1$.  
	
	\item For each $k \in \Z_+$, $h_k$ might have zeros such that the principal singularities of the scattered waves emanating from different points might result from the derivatives of $N$ in $u$ of distinct orders. Consequently, a much finer discussion on $\supp h_k$ for each $k \in \N$ is required.  
\end{itemize}

\subsection{Structure of the paper}This paper is organized as follows. First of all, Section \ref{sec: linear waves with potential} provides a brief review of useful tools of analysis and geometry. Following the roadmap, the proof of Theorem \ref{thm : simplified theorem} is carried out from Section \ref{sec : 3 wave linearization} to Section \ref{sec : Step 5}.

\section{Preliminaries}\label{sec: linear waves with potential}
%The framework of higher order linearization utilizes linear waves to approach underlying nonlinear waves.
%The relevant linear waves to approximate nonlinear waves in \eqref{eqn : semilinear wave} or \eqref{eqn : semilinear wave with power series} are solutions to
%\begin{equation}\label{linear waves with a potential}
 % \Box_g u + h_1 u = f
%\end{equation}
%where $h_1$ is a smooth function.

%We first review the useful tools in analysis and geometry.

\subsection{Lagrangian distributions}  
%A class of special solutions that play a central role in inverse problems for nonlinear hyperbolic equations is distorted plane waves, which are modeled by a class of conormal distributions. We first recall the conormal distributions in \cite[Section 18]{H3}.

Unlike the  Minkowski case, the linearized waves  turns out to be  no longer distorted plane waves but Lagrangian distributions. Thus, we have to invoke  H\"ormander's framework of Lagrangian distributions in \cite{H}.

Let $X$ be a smooth $n$-manifold with local coordinates $(x^1,\dots,x^n)$ and denote by $(\xi_1,\dots,\xi_n)$ the corresponding coordinates on the cotangent space $T_{x}^{\ast}X$. Let $\Lambda\subseteq T^{\ast}X \b 0$ be a smooth conic Lagrangian submanifold (see e.g. \cite[Definition 21.1.8, 21.2.5]{H3}).

Suppose $\phi(x,\theta)$ with $(x,\theta) \in U\times \R^{N}$ is a nondegenerate phase function defined in \cite[Definition 21.2.15]{H3}, where $U$ is a coordinate neighborhood in $X$. Then, $\phi$ parametrizes $\Lambda$ in a conic neighborhood $\Gamma \subseteq U \times \R^{N}$ in the sense  
 \begin{equation}\label{eqn: nondegenerate phase function}
    \Lambda = \{(x,d_{x}\phi(x,\theta)) \in T^\ast U : d_{\theta}\phi(x,\theta) = 0, (x,\theta)\in\Gamma\}.
\end{equation}

 Lagrangian distributions are defined as follows.

\begin{definition}
Suppose that $\Lambda$ is a conic Lagrangian submanifold of $T^{\ast}X\b 0$. The space $I^{m}(\Lambda)$ consists of distributions $u = \sum_{j\in J} u_j\in \mathcal{D}^{\prime}(X)$ such that the supports of $u_j$ are locally finite and the expression of $u_j$ is
$$
 u_{j}(x)= (2\pi)^{-\frac{n+2N_j}{4}}\int_{\R^{N}}e^{\imath(\phi_{j}(x,\theta) -\pi N_j/4)}a_{j}(x,\theta)d\theta,
$$
where 
\begin{itemize}
    \item $\phi_{j}(x,\theta)$ is a nondegenerate phase function parametrizing $\Lambda$ as in \eqref{eqn: nondegenerate phase function};

    \item $a_{j}(x,\theta)$ is a classical symbol of order $m+n/4 - N_j/2$ supported in a conic neighborhood in $U_j\times \R^{N_j}$.
\end{itemize}
\end{definition}

For a Lagrangian distribution $u \in I^{m}(\Lambda)$,  its principal symbol $\sigma[u]$ of $u$ on $\Lambda$ can be intrinsically defined as  a section lying in the quotient space
\begin{equation}\label{eqn : principal symbol with HD KM bundle}
 \sigma[u] \in S^{m+\frac{n}{4}}(\Lambda,\Omega_{1/2}\otimes L) / S^{m+\frac{n}{4}-1}(\Lambda,\Omega_{1/2}\otimes L),
\end{equation}
where $\Omega_{1/2}$ is the half-density bundle over $\Lambda$ and $L$ is the Keller--Maslov line bundle. More precisely, an element $\sigma[u] \in S^{m+n/4}(\Lambda,\Omega_{1/2}\otimes L)$ means that
$$
\sigma[u]_{j} = \imath^{\sigma_{jk}} \sigma[u]_{k}\quad \text{in }U_{j}\cap U_{k},
$$
where $\sigma[u]_{j}$ is the local expression of $\sigma[u]$ in $U_{j}$, and
\begin{equation}\label{eqn: definition of sigma_{jk}}
\sigma_{jk} := ((\sgn \partial_{\theta}^{2}\phi_{k}(x,\theta_k) - N_k) - (\sgn\partial_{\theta}^{2}\phi_{j}(x,\theta_j)-N_j))/2.
\end{equation}

A special class of Lagrangian distributions are conormal distributions / distorted planes waves, which are associated with conormal bundles to $p$-submanifolds.
A $p$-submanifold $Y\subseteq X$ of codimension $n-r$ is locally furnished with coordinates $(x_1,\cdots,x_r)$, for some $0\le r \le n-1$. Then the conormal bundle $N^{\ast}Y \subseteq T^{\ast}X$ is locally given by
$$
  N^{\ast}Y = \{(x^1,\dots,x^r,\xi_{r+1},\dots,\xi_n)\}.
 $$
 For brevity we write 
 \begin{align}\label{eqn : '-'' coordinates} \left\{ \begin{aligned}
x &= (x^{\prime}, x^{\prime\prime}),  &&\mbox{for $x^{\prime} = (x^1,\dots,x^r)$ and $x^{\prime\prime} = (x^{r+1},\dots,x^n)$}; 
\\ 
\xi &= (\xi^{\prime},\xi^{\prime\prime}), &&\mbox{for $\xi^{\prime} = (\xi_1,\dots,\xi_r)$ and $\xi^{\prime\prime} = (\xi_{r+1},\dots,\xi_n)$}. 
\end{aligned} \right.\end{align}

 \begin{definition}

 We say that a distribution $u \in \mathcal{D}'(X)$ is conormal to $Y$ and of order $m$ if $u$, in coordinates \eqref{eqn : '-'' coordinates}, takes the local form
  $$
     u(x) = \int_{\R^{n-r}} e^{\imath x^{\prime\prime}\cdot\xi^{\prime\prime}}a(x^{\prime},\xi^{\prime\prime})d\xi^{\prime\prime}, \quad \text{where} \quad a\in S^{m+n/4-(n-r)/2}\l(\R^{r}_{x^{\prime}}\times\mathbb{R}^{n-r}_{\xi^{\prime\prime}}\r).
  $$
The principal symbol of $u$ is given by 
$$
  \sigma[u] = (2\pi)^{\frac{n}{4}-\frac{n-r}{2}}a(x^{\prime},\xi^{\prime\prime})|dx^{\prime}|^{\frac{1}{2}}|d\xi^{\prime\prime}|^{\frac{1}{2}} \in S^{m+\frac{n}{4}}(N^{\ast}Y)/S^{m+\frac{n}{4} - 1}(N^{\ast}Y).
$$
We  denote by $I^{m}(N^{\ast}Y)$ the collection of  conormal distributions of order $m$ to $Y$. \end{definition}
 Note that the Keller--Maslov bundle associated with a conormal bundle is trivial.

%To study linear wave equations with a source, Melrose-Uhlmann \cite{MU} and Guillemin-Uhlmann \cite{GU} developed the theory of paired conormal distributions. 

The paired Lagrangian distributions, introduced by Melrose--Uhlmann \cite{MU}, are useful to describe waves near the source as well as the products of conormal waves.

Let $\Lambda_0$ be a conic Lagrangian submanifold of $T^{\ast}X\b0$ and $\Lambda_1$ be a conic Lagrangian submanifold of $T^{\ast}X\b 0$ with boundary. We say $(\Lambda_0,\Lambda_1)$ form an intersecting pair of Lagrangian submanifolds if $\Lambda_0$ and $\Lambda_1$ intersect cleanly. That is, $\Lambda_0 \cap \Lambda_1 = \p \Lambda_1$ and $$T_\lambda (\Lambda_0) \cap T_\lambda (\Lambda_1) = T_{\lambda} (\p \Lambda_1), \quad \mbox{for all $\lambda \in \p \Lambda_1$}.$$

An intersecting paired Lagrangian distribution is defined as follows. 

\begin{definition}
    Let $(\Lambda_0,\Lambda_1)$ be an intersecting pair of Lagrangian submanifolds of $T^\ast X \b 0$. The space $I^{m}(\Lambda_0,\Lambda_1)$ consists of $u\in \mathcal{D}^{\prime}(X)$ such that
    $$
       u = u_0 + u_1 +\sum_{j}F_j v_j,
    $$
    where
    \begin{itemize}
        \item $u_0\in I^{m-1/2}(\Lambda_0)$ and $u_1 \in I^{m}(\Lambda_1\b\partial\Lambda_1)$;
        \item $F_j$ is a zeroth order elliptic Fourier integral operator associated with a homogeneous canonical transformation $\chi_j$ mapping $V_j\subseteq \partial\Lambda_1$ to $T^{\ast}\R^{n}$;
        \item $v_j$  %introduced in \cite[Definition 2.1]{MU},
         is  microlocally supported near $\Lambda_0 \cap \Lambda_1$, and locally takes the form \[v_j(x) = \int_0^\infty \int_{\R^n} \exp\l(\imath (x_1 - s) \xi_1 + \sum_{j=2}^n x_j \xi_j  \r) a_j(s, x, \xi)\,d\xi ds\] with $a_j \in S_{\mathrm{cl}}^{m+1/2-n/4}(T^\ast \R^n)$;
        \item the summation $\sum_{j}F_j v_j$ is locally finite.
    \end{itemize}
\end{definition}

Let $\chi_0,\chi_1$ be a zero-th order pseudodifferential operator satisfying 
$$
 \WF(\chi_0)\cap \Lambda_1 = \varnothing\quad\text{and}\quad\WF( \chi_1) \cap \Lambda_0 = \varnothing.
$$
By \cite[Proposition 4.1]{MU}, we have
$$
 \chi_0 u \in I^{m-1/2}(\Lambda_0\b \partial\Lambda_1),\quad \chi_1 u \in I^{m}(\Lambda_1\b\partial\Lambda_1).
$$
The principal symbol of $u$ away from $\partial\Lambda_1$ is 
\begin{equation}
    \begin{aligned}
        \begin{cases}
            \sigma[u](\lambda_0) = \sigma[\chi_0 u](\lambda_0)/\sigma[\chi_0](\lambda_0),\quad \lambda_0\in\Lambda_0\b\partial\Lambda_1,\\
            \sigma[u](\lambda_1) = \sigma[\chi_1 u](\lambda_1)/\sigma[\chi_1](\lambda_1),\quad \lambda_1\in\Lambda_1\b\partial\Lambda_1.
            \nonumber
        \end{cases}
    \end{aligned}
\end{equation}
On the boundary $\partial\Lambda_1 = \Lambda_0\cap\Lambda_1$, the symbol transition map $\mathscr{R}$ is invariantly defined in \cite[(4.7),(4.12)]{MU} such that
\begin{align*}
  \mathscr{R}: S^{m-\frac{1}{2}+\frac{n}{4}}(\Lambda_0\b\partial\Lambda_1) &\longrightarrow S^{m+\frac{n}{4}}(\Lambda_1\b\partial\Lambda_1)\\
  \sigma[u]|_{\Lambda_0\b\partial\Lambda_1} &\longmapsto  \sigma[u]|_{\partial\Lambda_1}.
\end{align*}

Moreover, \cite[Lemma 1.1]{GU} computed the product of two distributions conormal to two transverse $p$-submanifolds.
\begin{proposition}\label{prop: Greenleaf-Uhlmann}
  Let $Y_{(1)}$ and $Y_{(2)}$ be two transversally intersecting  $p$-submanifolds of $X$ and $u_{(j)} \in I^{\mu_j}\l( N^{\ast}Y_{(j)}\r)$ for $j=1,2$. Then, microlocally away from $N^{\ast}Y_{(1)} \cup N^{\ast}Y_{(2)}$, 
$$ 
  u_{(1)}u_{(2)}\in I^{\mu_1+\mu_2+1}\l( N^{\ast}\l(Y_{(1)}\cap Y_{(2)} \r)\r).
$$
Furthermore, the principal symbol of $u_{(1)}u_{(2)}$ at $(x,\xi)\in N^{\ast}\l(Y_{(1)}\cap Y_{(2)} \r) \setminus \cup_{j=1}^{2}N^{\ast}Y_{(j)}$ is given by
$$
  \sigma\l[u_{(1)}u_{(2)}\r](x,\xi) = \sigma\l[u_{(1)}\r]\l(x,\xi_{(1)}\r) \sigma\l[u_{(2)}\r]\l(x,\xi_{(2)}\r),
$$
where $\xi_{(j)}\in N^{\ast}Y_{(j)}\b 0$ and $\xi = \xi_{(1)} + \xi_{(2)}$.
\end{proposition}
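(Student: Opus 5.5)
The plan is to reduce to a local model in which both $Y_{(1)}$ and $Y_{(2)}$ are coordinate planes, multiply the two oscillatory integrals, and read off the product as one conormal integral to $Y_{(1)}\cap Y_{(2)}$. The order comes out as $\mu_1+\mu_2+n/4$ for arbitrary codimensions, which is the stated $\mu_1+\mu_2+1$ in the paper's setting $n=\dim M=4$. Membership in a Lagrangian class and the principal symbol are local and microlocal, so it suffices to work near a point of $Y_{(1)}\cap Y_{(2)}$.

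\textbf{Step 1 (normal form).} Let $Y_{(j)}$ have codimension $k_j$. Transversality lets us choose local coordinates $x=(z,y_1,y_2)\in\R^{n-k_1-k_2}\times\R^{k_1}\times\R^{k_2}$ with $Y_{(1)}=\{y_1=0\}$ and $Y_{(2)}=\{y_2=0\}$. Then $Y_{(1)}\cap Y_{(2)}=\{y_1=y_2=0\}$ has codimension $k_1+k_2$, and
\[
N^\ast(Y_{(1)}\cap Y_{(2)})=\{(z,0,0;0,\eta_1,\eta_2)\},\qquad N^\ast Y_{(j)}=\{\eta_{j'}=0\}\cap N^\ast(Y_{(1)}\cap Y_{(2)})\ (j'\neq j).
\]
By the definition of conormal distributions, and after removing the dependence of the amplitudes on the fibre variable $y_j$ by the standard Taylor expansion and integration by parts (which changes only lower-order terms), we may write
\[
u_{(1)}=\int e^{\imath y_1\cdot\eta_1}a_1(z,y_2,\eta_1)\,d\eta_1,\qquad
u_{(2)}=\int e^{\imath y_2\cdot\eta_2}a_2(z,y_1,\eta_2)\,d\eta_2,
\]
with $a_j\in S^{\mu_j+n/4-k_j/2}$.

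\textbf{Step 2 (product and microlocalization).} Multiplying the two integrals gives
\[
u_{(1)}u_{(2)}=\int e^{\imath(y_1\cdot\eta_1+y_2\cdot\eta_2)}\,a_1(z,y_2,\eta_1)\,a_2(z,y_1,\eta_2)\,d\eta_1\,d\eta_2 .
\]
The amplitude $b=a_1a_2$ is not a symbol in $\eta=(\eta_1,\eta_2)$ near the axes $\eta_1=0$ or $\eta_2=0$, and these axes are exactly $N^\ast Y_{(2)}$ and $N^\ast Y_{(1)}$. We therefore insert a zeroth order conic cutoff $\chi(\eta)$ supported where $|\eta_1|\sim|\eta_2|\sim|\eta|$; this localizes away from $N^\ast Y_{(1)}\cup N^\ast Y_{(2)}$, which is all the statement claims. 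On $\supp\chi$ each derivative $\partial_{\eta_j}$ gains a factor $|\eta_j|^{-1}\sim|\eta|^{-1}$. Hence $\chi b\in S^{\mu_1+\mu_2+n/2-(k_1+k_2)/2}$ in $(z,y;\eta)$, and it is classical when the $a_j$ are. Removing the $y$-dependence once more as in Step 1, the conormal order $m$ with respect to a codimension $k_1+k_2$ submanifold is fixed by
\[
m+\tfrac n4-\tfrac{k_1+k_2}{2}=\mu_1+\mu_2+\tfrac n2-\tfrac{k_1+k_2}{2}.
\]
This gives $m=\mu_1+\mu_2+n/4$, which equals $\mu_1+\mu_2+1$ for $n=4$.

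\textbf{Step 3 (principal symbol).} The principal part of the amplitude at $y=0$ is $a_1(z,0,\eta_1)\,a_2(z,0,\eta_2)$. Writing $\xi=(0,\eta_1,\eta_2)$, we have $\xi_{(1)}=(0,\eta_1,0)\in N^\ast Y_{(1)}$ and $\xi_{(2)}=(0,0,\eta_2)\in N^\ast Y_{(2)}$, so $\xi=\xi_{(1)}+\xi_{(2)}$. The product of the two principal symbols is therefore evaluated at $(x,\xi_{(1)})$ and $(x,\xi_{(2)})$, as claimed.

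\textbf{Main obstacle.} The only delicate point is matching normalizations in this step. The product of the two normalizations is $(2\pi)^{n/2-(k_1+k_2)/2}$, while the normalization for $N^\ast(Y_{(1)}\cap Y_{(2)})$ is $(2\pi)^{n/4-(k_1+k_2)/2}$. The product also carries the half-densities $|dz\,dy_2|^{1/2}|d\eta_1|^{1/2}$ and $|dz\,dy_1|^{1/2}|d\eta_2|^{1/2}$, whereas the target uses $|dz|^{1/2}|d\eta_1\,d\eta_2|^{1/2}$. I would handle this by using the canonical identifications on the intersection that come from transversality, following Greenleaf--Uhlmann \cite[Lemma 1.1]{GU}. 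Under those identifications the constant factor is absorbed into the convention, and the symbol formula holds as stated, modulo $S^{m+n/4-1}$.
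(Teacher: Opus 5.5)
Your Steps 1--3 are the standard local computation behind \cite[Lemma 1.1]{GU}, which the paper cites without giving a proof of its own, and they are sound. Your order count is also right, including the observation that on a general $n$-manifold the order is $\mu_1+\mu_2+n/4$, so the printed $\mu_1+\mu_2+1$ is the case $n=4$.

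The gap is in your final paragraph. The mismatch you found between $(2\pi)^{n/2-(k_1+k_2)/2}$ and $(2\pi)^{n/4-(k_1+k_2)/2}$ is a pure number coming from the normalisation in the definition of $\sigma$. No identification of half-densities can absorb it. What your computation actually proves is
\[
\sigma[u_{(1)}u_{(2)}](x,\xi)=(2\pi)^{-n/4}\,\sigma[u_{(1)}](x,\xi_{(1)})\,\sigma[u_{(2)}](x,\xi_{(2)}),
\]
that is, an extra factor $(2\pi)^{-1}$ when $n=4$. This is exactly the factor the paper itself uses later, despite the statement:
\begin{itemize}
\item each product contributes $(2\pi)^{-1}$ in the symbol of $F_{(123)}$;
\item the symbol of $u^{\mathrm{fre}}_{(ij)}$ carries $-\pi^{-1}=-2(2\pi)^{-1}$ from the source $-2h_2u_{(i)}u_{(j)}$;
\item the three- and four-wave symbols carry $(2\pi)^{-2}$ and $(2\pi)^{-3}$.
\end{itemize}
So the conclusion you should draw is that the displayed formula is missing this universal nonzero constant, not that it holds as stated. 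The omission is harmless downstream, where only non-vanishing and independence from the unknown coefficients are used.

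Relatedly, the identification you attribute to transversality is only partly canonical. The splitting $N^\ast_x(Y_{(1)}\cap Y_{(2)})=N^\ast_xY_{(1)}\oplus N^\ast_xY_{(2)}$ does match the fibre factors $|d\eta_1|^{1/2}|d\eta_2|^{1/2}$ with $|d\eta|^{1/2}$. The base factors do not match: $|dz\,dy_2|^{1/2}\,|dz\,dy_1|^{1/2}$ exceeds $|dz|^{1/2}$ by $|dx|^{1/2}$. Writing the symbol of the product as a product of symbols therefore requires dividing by a fixed half-density on $X$, either the coordinate one in your chart or $dV_g^{1/2}$ on $M$. You should make that choice explicit. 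It is also what gives meaning to the pointwise formula as the paper writes it.
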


\subsection{Distorted plane waves}\label{sec: Distorted plane waves}  
  In this section, we  follow  \cite[Section 3.2.3]{KLU} to construct distorted plane waves solving the inhomogeneous linear wave equation
  \begin{equation}\label{eqn: linear wave equation}
  \Box_g u+ h_1u =f. 
  \end{equation}

Denote by $LM\subseteq TM$ the null tangent bundle, 
$$
  LM := \{(p,\zeta)\in TM: g(\zeta,\zeta) = 0\}.
$$
For $x_0 \in \rotom$, $\zeta_0 \in L_{x_0}M$ and sufficiently small $s_0>0$,  let $\gamma_{x_0,\zeta_0}(s)$ be the light-like geodesic originating from
 $$
   \gamma_{x_0,\zeta_0}(0) = x_0,\quad \dot{\gamma}_{x_0,\zeta_0}(0) = \zeta_0.
 $$
Consider the following neighbourhood $\W_{x_0,\zeta_0,s_0}$ of $\zeta_0$ on a sphere in $L_{x_0} M$, 
$$
  \W_{x_0,\zeta_0,s_0} =\l\{\eta\in L_{x_0}M: \|\eta-\zeta_0 \|_{\R^n}<s_0, \| \eta\|_{\R^n} = \|\zeta_0 \|_{\R^n} \r\},
  $$
  where $\|\cdot\|_{\R^n}$ is the usual vector norm in $\R^n$.
The source $f$ we favour for $\eqref{eqn: linear wave equation}$ has a wavefront set 
$$
 \Sigma(x_0,\zeta_0,s_0) := \l\{(x_0,r\eta^{\flat})\in T^{\ast}M: \eta \in \W_{x_0,\zeta_0,s_0}, r\in \R\b 0  \r\}.
$$
%Let $\W_{x_0,\zeta_0,s_0} = \V_{x_0,\zeta_0,s_0} \cap LM$, and define
Then the waves $u$,  emanating from such a source $f$, have the following singular support and wavefront set  \begin{equation}
  \begin{aligned}
     K(x_0,\zeta_0,s_0) &:=\l\{\gamma_{x_0,\eta}(s)\in M : \eta\in \W_{x_0,\zeta_0,s_0}, s\in \R^{+} \r\},\\
     \Lambda(x_0,\zeta_0,s_0) &:= \l\{ \l(\gamma_{x_0,\eta}(s),r\dot{\gamma}_{x_0,\eta}(s)^{\flat} \r) \in T^{\ast}M: \eta\in \W_{x_0,\zeta_0,s_0},s\in \R^{+},r\in \R\b0\r\}.
   \nonumber
   \end{aligned}
\end{equation}
In particular, $\Lambda$ intersects $\Sigma(x_0,\zeta_0,s_0)$ transversally at the boundary $\p \Lambda (x_0,\zeta_0,s_0)$,
$$
  \partial\Lambda(x_0,\zeta_0,s_0) = \overline{\Lambda}(x_0,\zeta_0,s_0)   \cap \Sigma(x_0,\zeta_0,s_0).
$$

%Let $Q_{g,V} = (\Box_g+V)^{-1}$ denote the causal inverse operator, i.e., the forward fundamental solution of the linear wave \eqref{linear waves with a potential}, see \cite[Theorem 3.2.11]{BGP}. A geometric representation of its Schwartz kernel is given by \cite{KR}. For convenience, denote
%$$
%Q := Q_{g,V}.
%$$ 
%Then, the solution $u$ to \ref{linear waves with a potential} with source $f$ can be written as
%

Let $Q$ be the solution operator to $\eqref{eqn: linear wave equation}$. Namely, the solution $u$ is 
\begin{equation}\label{eqn: causal inverse}
	  u =Q(f).
	\end{equation}	
By \cite[Proposition 6.6]{MU}, the Schwartz kernel $Q(x,y)$ of $Q$ lies in the space
$$
 I^{-\frac{3}{2}}(N^{\ast}\Delta,\Lambda_g),
$$ where $N^{\ast}\Delta$ is the conormal bundle of the diagonal $\Delta = {(x,x)\in M\times M}$ and $\Lambda_g$ is the Lagrangian submanifold associated with the bicharacteristic relation. %\cite[Proposition 26.1.3]{H4}.

Applying \cite[Proposition 6.6]{MU}, we can construct a solution to \eqref{eqn: linear wave equation} with a point source as follows.
\begin{proposition}\label{prop: linear wave}
Let $\Lambda =  \Lambda(x_0,\zeta_0,s_0)$ and $\Sigma =\Sigma(x_0,\zeta_0,s_0)$. Suppose $f$ belongs to $I^{\mu+3/2}(\Sigma)$. Then, the solution $u = Q(f)$ is a paired conormal distribution satisfying
$$
   u\in I^{\mu}( \Sigma,\Lambda), \quad \text{and} \quad u|_{M\b \{x_0\}} \in I^{\mu}(M\b \{x_0\}, \Lambda ).
$$
If $(y,\eta)\in \Lambda$ and $(x,\xi)\in \Sigma$ lie on the same bicharacteristics, then the principal symbols $\sigma[u]$ and $\sigma[f]$ satisfy the relation
\begin{equation}\label{eqn: causal inverse as a linear operator}
 \sigma[u](y,\eta) = \sigma\l[Q\r](y,\eta,x,\xi)\sigma[f](x,\xi),
\end{equation}
 Moreover, $\sigma[u]$ solves the following transport equation:
\begin{equation}\label{eqn: transport equation for linear waves}
 \begin{aligned} 
   \begin{cases}
     \mathscr{L}_{H_{\Box_g}}\sigma[u]=0,\quad &\text{on }\Lambda\b\partial\Lambda,\\
     \sigma[u]  = \mathscr{R}\l( \sigma\l[\Box_g\r]^{-1}\sigma[f]\r)\quad &\text{on }\partial\Lambda,
   \end{cases}
 \end{aligned}
\end{equation}
where $\mathscr{L}_{H_{\Box_g}}$ denotes the Lie derivative along the Hamiltonian vector field $H_{\Box_g}$.

\end{proposition}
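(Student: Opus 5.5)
The proof is essentially an application of the Melrose--Uhlmann calculus of paired Lagrangian distributions. I would treat the operator $Q$ as a Fourier integral operator with paired Lagrangian kernel and compose it with $f$.

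\textbf{Step 1: the kernel and the composition.} By \cite[Proposition 6.6]{MU}, adapted to $P:=\Box_g+h_1$, the kernel $Q(x,y)$ lies in $I^{-3/2}(N^\ast\Delta,\Lambda_g)$. The potential $h_1$ is a zeroth order term, so it does not change the principal symbol $\sigma[P]=\sigma[\Box_g]$, nor the characteristic variety, nor the bicharacteristic relation $\Lambda_g$. Take $f\in I^{\mu+3/2}(\Sigma)$, where $\Sigma$ is the conic Lagrangian (a piece of the conormal bundle of $\{x_0\}$, restricted to directions near $\zeta_0^\flat$). I then split $Q$ microlocally into two parts.
\begin{itemize}
\item The part near $N^\ast\Delta$ acts as a parametrix of order $-2$. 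It maps $f$ into $I^{\mu-1/2}(\Sigma)$, which matches the $u_0$ component in the definition of $I^\mu(\Sigma,\Lambda)$.
\item The part on $\Lambda_g\setminus\partial\Lambda_g$ is an FIO of order $-3/2$ associated with the flowout canonical relation. Under transversal composition it maps $I^{\mu+3/2}(\Sigma)$ to $I^{\mu}(\Lambda)$. The image Lagrangian is exactly $\Lambda_g'\circ\Sigma=\Lambda(x_0,\zeta_0,s_0)$, because $\Sigma\cap\Char(P)$ consists of the null covectors $r\eta^\flat$ and their bicharacteristic flowout is $\Lambda$.
\end{itemize}
The clean intersection of $\Sigma$ with $\Lambda$ along $\partial\Lambda=\overline\Lambda\cap\Sigma$ is exactly the setting of \cite[Proposition 6.6]{MU}, or its composition theorem for paired Lagrangians. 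Applying it gives $u\in I^\mu(\Sigma,\Lambda)$. Since $\Sigma$ lies over the single point $x_0$, restricting to $M\setminus\{x_0\}$ removes both $\Sigma$ and $\partial\Lambda$. This leaves $u|_{M\setminus\{x_0\}}\in I^\mu(\Lambda)$.

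\textbf{Step 2: the symbol formula.} The relation $\sigma[u](y,\eta)=\sigma[Q](y,\eta,x,\xi)\sigma[f](x,\xi)$ is the symbol calculus for transversal FIO composition away from $\partial\Lambda$. It amounts to multiplying the half-density symbols along the fibre of the canonical relation, with the Maslov factors composing. Nontransversality would arise only at $\partial\Lambda$, which we avoid by working on $\Lambda\setminus\partial\Lambda$.

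\textbf{Step 3: the transport equation.} I would apply $P$ to $u$, which gives $Pu=f$. Away from $\Sigma$ the right-hand side is smooth, so the principal symbol of $Pu$ on $\Lambda\setminus\partial\Lambda$ vanishes to the next order. The standard transport identity
\[
\sigma[Pu]=\imath^{-1}\mathscr L_{H_{\sigma[P]}}\sigma[u]+c\,\sigma[u]
\]
then gives the equation. Here $c$ is the subprincipal symbol: it vanishes for $\Box_g$ in the half-density normalization, and $h_1$ is of lower order, so it does not enter at this level. The result is $\mathscr L_{H_{\Box_g}}\sigma[u]=0$.

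\textbf{Step 4: the initial condition.} On $\Sigma\setminus\partial\Lambda$, $P$ is elliptic, so $\sigma[u]=\sigma[\Box_g]^{-1}\sigma[f]$ there. This symbol is singular at $\partial\Lambda$, and the Melrose--Uhlmann transition map $\mathscr R$ converts it into the boundary value of $\sigma[u]$ on $\Lambda$ \cite[(4.7),(4.12)]{MU}.

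The main obstacle is this boundary step. One has to check that the order conventions (the shift between $I^{\mu-1/2}$ on $\Sigma$ and $I^\mu$ on $\Lambda$) and the definition of $\mathscr R$ are consistent with the normalization of $f$. One also has to check that the Keller--Maslov factors coming from the flowout over a point source are accounted for in $\sigma[Q]$. I would handle this by working in local coordinates in which $\Sigma$ is conormal to $x_0$, and reducing to the model $v_j$ of the paired class.
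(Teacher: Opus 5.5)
Your proposal is correct and is essentially the paper's own argument. The paper gives no separate proof: it invokes \cite[Proposition 6.6]{MU} together with the paired-Lagrangian symbol calculus and the transition map $\mathscr{R}$ of \cite[(4.7),(4.12)]{MU}, which is exactly what your Steps 1--4 unpack, and you rightly read $\Sigma$ as a conic piece of $T^\ast_{x_0}M\setminus 0$, since the set of null covectors at $x_0$ alone is only three-dimensional and not Lagrangian.

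One small correction to Step 2 concerns why you must stay on $\Lambda\setminus\partial\Lambda$. The reason is not a loss of transversality, since composing $\Lambda_g'$ with $T^\ast_{x_0}M$ is transversal, as you use in Step 1. Rather, near $\partial\Lambda_g=N^\ast\Delta\cap\Lambda_g$ the kernel of $Q$ is genuinely paired and not a classical Fourier integral operator.
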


\begin{remark}\label{eqn: principal symbol calculus in different languages}
 By \cite[(2.8)]{Chen-Lu-Zhang-2025}, the principal symbol $u$ at $(y,\eta)$ can be written as
\begin{equation}\label{eqn: fundamental solution of the principal symbol}
  \sigma[u](y,\eta) = \alpha(y,\eta)\sigma[u](x,\xi),
\end{equation}
where $\alpha$ is a zero-th order positive factor only depending on $\Lambda$ and $\Sigma$. Moreover, by the transport equation \eqref{eqn: transport equation for linear waves},
$$
  \sigma[u](x,\xi) = \mathscr{R}\l( \sigma\l[\Box_g\r]^{-1}\sigma[f]\r)(x,\xi),
$$
which shows that $\sigma[u]$ is independent of $h_1$.

  Combining \eqref{eqn: causal inverse as a linear operator} and \eqref{eqn: fundamental solution of the principal symbol}, we obtain the identity
 $$
     \alpha(y,\eta)\mathscr{R}\l( \sigma\l[\Box_g\r]^{-1}\sigma[f]\r)(x,\xi) = \sigma\l[Q\r](y,\eta,x,\xi)\sigma[f](x,\xi),
 $$
 which provides a relation between the principal symbols of $Q,u$ and $f$.
\end{remark}

The principal symbol calculus above are irrelevant to $h_1$. To extract the information of $h_1$, we consider the lower order term $u^{\rem}$ in the full distorted plane waves. Denote by $u^{\fre}$ the free wave solving
 \begin{equation}\label{eqn: equation of the free wave} 
   \Box_g u^{\fre} = f.
 \end{equation}
 By Proposition \ref{prop: linear wave}, we have
 \begin{equation}\label{eqn: microlocal property of free linear waves}
 u^{\fre} \in I^{\mu}(\Sigma,\Lambda)\quad\text{and}\quad \sigma\l[u^{\fre}\r] = \sigma[u].
 \end{equation}
Define the remainder wave by
 \begin{equation}\label{eqn: definition of the remiander wave}
    u^{\rem} := u - u^{\fre}.
 \end{equation}
 Applying $\Box_g+h_1$ to both sides yields
  \begin{equation}\label{eqn: equation of the remiander wave}
   (\Box_g+h_1)u^{\rem} = (\Box_g+h_1)u - \Box_g u^{\fre} - h_1u^{\fre} = -h_1u^{\fre}.
 \end{equation}
\cite[Proposition 2.2]{Chen-Lu-Zhang-2025} computed the principal symbol of $u^{\rem}$ as 
 \begin{equation}\label{eqn: lower order symbols}
\sigma\l[u^{\rem}\r](y,\eta) = -\imath\sigma[u](x,\xi)\int_{0}^{s}h_1(\gamma(\tau))d\tau  \mod S^{\mu-1}(\Lambda\b\partial\Lambda).
 \end{equation}

\subsection{Gaussian beams}The Gaussian beam solutions to hyperbolic equations was originally introduced by Babich--Ulin \cite{BU}, and further developed by Ralston \cite{R}.
We next construct Gaussian beam solutions on the globally hyperbolic Lorentzian manifold $(M,g)$, following the presentation in \cite[Section 3]{FIKO} and \cite[Section 4]{FO}.  The construction is based on the following Fermi coordinates near light-like geodesics.
\begin{lemma}\label{lemma : fermi} Let $\gamma: (a-\rho, b+\rho) \rightarrow M$ be a light-like geodesic segment on $M$, where $\rho > 0$ and $a < b$. 

\begin{itemize} \item[(i)] There exists a coordinate chart $(U,\Phi)$ of $\gamma([a,b])$ with local coordinates denoted by $(y^0, y^1, y^2, y^3) := (s,y^{\prime})$ such that
	\begin{align*}
		\Phi(U)=(a-\rho^{\prime},b+\rho^{\prime}) \times B(0,\rho^{\prime})  ,\quad\text{and} \quad\Phi(\gamma(s))=(s,0,0,0),
	\end{align*}
    where $B(0, \rho^\prime)$ is a sufficiently small open ball in $\R^{3}$.
\item[(ii)]	The metric tensor $g$ restricted on $\gamma$ satisfies
	\begin{align*}
		 g|_{\gamma} &= 2ds\otimes dy^1 + dy^{2}\otimes dy^{2} + dy^{3}\otimes dy^{3} 
	\end{align*} 
    and $ \frac{\partial}{\partial y^i} g_{jk}|_{\gamma} =0$ for $i,j,k =0,1,2,3$.
\end{itemize}\end{lemma}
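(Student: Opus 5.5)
We construct the Fermi coordinates in the standard way, following \cite[Section 3]{FIKO}; see also \cite{FO}. The idea is to build a parallel null frame along $\gamma$ and use it to define an exponential-type map transversal to $\gamma$.

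\textbf{Step 1: the frame along $\gamma$.} Set $E_0(s) := \dot\gamma(s)$, which is null and parallel since $\gamma$ is a geodesic. At $s=a$, choose a null vector $E_1(a)$ with $g(E_0(a),E_1(a))=1$. Then choose $E_2(a),E_3(a)$ spanning the spacelike $2$-plane $\{E_0(a),E_1(a)\}^{\perp}$, orthonormal there. This is possible because the signature is $(3,1)$ and $\mathrm{span}\{E_0,E_1\}$ is a Lorentzian plane. Parallel transport $E_1,E_2,E_3$ along $\gamma$ over $(a-\rho,b+\rho)$. Parallel transport preserves inner products, so for all $s$ the only nonzero pairings are $g(E_0,E_1)=1$ and $g(E_2,E_2)=g(E_3,E_3)=1$.

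\textbf{Step 2: the coordinate map.} Define
\[
F(s,y^1,y^2,y^3) := \exp_{\gamma(s)}\l(y^1E_1(s)+y^2E_2(s)+y^3E_3(s)\r).
\]
At $y'=0$ we have $dF = (E_0,E_1,E_2,E_3)$, which is invertible, and $F(s,0)=\gamma(s)$.

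\textbf{Step 3: injectivity.} Fix a compact interval $[a-\rho'',b+\rho'']$. On it the segment $\gamma$ is injective, since a globally hyperbolic manifold has no closed causal curves. A standard compactness argument (the tubular neighbourhood argument) then shows $F$ is a diffeomorphism from $(a-\rho',b+\rho')\times B(0,\rho')$ onto its image $U$ for $\rho'$ small. Take $\Phi := F^{-1}$; this gives (i).

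\textbf{Step 4: the metric on $\gamma$.} In these coordinates, on $\gamma$ we have $\partial_s=E_0$ and $\partial_{y^i}=E_i$. Hence
\[
g|_\gamma = 2\,ds\otimes dy^1 + dy^2\otimes dy^2 + dy^3\otimes dy^3,
\]
understood as $ds\otimes dy^1+dy^1\otimes ds$ for the symmetric tensor. This is the first part of (ii).

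\textbf{Step 5: vanishing derivatives.} We show $\partial_i g_{jk}|_\gamma=0$, equivalently that all Christoffel symbols vanish on $\gamma$.
\begin{itemize}
\item For each fixed $s$, the curves $t\mapsto F(s,tv)$ with $v=(y^1,y^2,y^3)$ are geodesics. This gives $\Gamma^k_{ij}(s,0)v^iv^j=0$ for transversal indices $i,j\in\{1,2,3\}$, and by polarization $\Gamma^k_{ij}=0$ for $i,j\ge1$.
\item Since $\gamma$ itself is a geodesic, $\Gamma^k_{00}=0$.
\item The frame is parallel, so $\nabla_{\partial_s}\partial_{y^i}=0$ on $\gamma$. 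This gives $\Gamma^k_{0i}=0$ for $i\ge1$.
\end{itemize}
So every $\Gamma^k_{ij}$ vanishes on $\gamma$. Then $\partial_i g_{jk}=g_{lk}\Gamma^l_{ij}+g_{jl}\Gamma^l_{ik}=0$ there.

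The main point needing care is Step 3: global injectivity of $F$ along the whole segment, as opposed to just local invertibility. We handle it using injectivity of $\gamma$ on the compact interval together with the inverse function theorem uniformly along the segment. The remaining steps are routine.
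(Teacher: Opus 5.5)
Your proposal is correct. The parallel null frame $(\dot\gamma,E_1,E_2,E_3)$, the map $F(s,y')=\exp_{\gamma(s)}(y^iE_i(s))$, injectivity from the absence of closed causal curves together with the compactness argument, and the vanishing of all Christoffel symbols on $\gamma$ give (i)--(ii); the paper states this lemma without proof, citing \cite[Section 3]{FIKO} and \cite[Section 4]{FO}, and your argument is essentially the standard Fermi-coordinate construction used there.
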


%Fix a null geodesic segment $\gamma$ and local Fermi coordinates as above. Denote a tubular set \[\mathcal{V} := \left\{ (s, y') : s\in (a-\rho',b+\rho'), |z'| < \rho' \right\}\]

 Consider the following ansatz supported on the tubular neighborhood  $\mathcal{V} := \overline{\Phi(U)}$  
\begin{align} \label{eqn : wkb ansatz}
	\mathcal{U}_{\lambda}(y)  := e^{\imath \lambda \phi(y)}A_{\lambda}(y),\quad \text{for }\lambda >0.
\end{align} Here the phase function $\phi \in C^\infty(\mathcal{V})$ is of the form \begin{equation}\label{eqn: construction of Gaussian beams1}
			\phi(s,y^{\prime}) = \sum_{j=0}^N \phi_{j}(s,y^{\prime})
	\end{equation} such that
\begin{itemize}
	\item each $\phi_j$ is a complex-valued homogeneous polynomial  in $y^{\prime}$ of degree $j$; 
	\item the imaginary part of $\phi$ vanishes on $\gamma$, i.e. $\Im \phi |_\gamma = 0$;
	\item the real part of $e^{\imath \lambda \phi}$ decays like a Gaussian function, i.e.  $\Im \phi (y) |_{\mathcal{V}} \gtrsim |y'|^2$.
\end{itemize}
 In the meanwhile, the amplitude function $A_{\lambda}\in C_c^\infty(\mathcal{V})$ takes the form 
	\begin{equation}\label{eqn: construction of Gaussian beams2}
		\begin{aligned} 
			A_{\lambda}(s,y^{\prime}) &= \chi \l(\frac{|y^{\prime}|}{\rho^{\prime}}\r) \sum_{k=0}^{N}\lambda^{-k} a_{k}(s,y^{\prime}),\quad
			a_{k}(s,y^{\prime}) = \sum_{j=0}^{N} a_{k,j}(s,y^{\prime}),
		\end{aligned}
	\end{equation}
	where \begin{itemize} \item $a_{k,j}$ are complex-valued homogeneous polynomials of degree $j$ in $y^{\prime}$; \item $\chi: \mathbb{R}\to [0,+\infty)$ is a cut-off function such that $\chi(t)\equiv 1$ for $|t|\le 1/4$ and $\chi(t)\equiv 0$ for $|t|\ge 1/2$. \end{itemize}

The standard WKB method enables us to determine the phase and amplitude. First, applying $\Box_g + h_1$ to \eqref{eqn : wkb ansatz} gives the following expansion in $\lambda$
\[  \left(\Box_g + h_1\right) \mathcal{U}_{\lambda} = e^{\imath \lambda \phi} \left( \lambda^2 (\mathcal{H} \phi) A_\lambda    -\imath \lambda \mathcal{T} A_\lambda + \left(\Box_g + h_1\right) A_\lambda \right),\]
where 
$$
	\mathcal{H} \phi := \langle d\phi, d\phi \rangle_g,  \quad \text{and}\quad \mathcal{T} A := 2 \langle d\phi, d A \rangle_g -  \left( \Box_g  \phi \right) A.
$$ 
To make the leading order term vanish, $\phi$ must solve the eikonal equation
\begin{align} \label{eqn : eikonal}
	\p_y^\alpha \left( \mathcal{H} \phi \right) (s, 0, 0, 0) = 0, \quad \text{for all } s \in ( a , b ).
\end{align} From subsequent terms, the amplitudes must solve the iterative transport equations 
\begin{align} \label{eqn : transport} \left\{\begin{aligned}
	\p_y^\alpha \left(\mathcal{T} a_0\right) (s, 0, 0, 0) &= 0, &  \text{for all } s \in ( a, b), \\
	\p_y^\alpha \left(- \imath \mathcal{T} a_k + \left(\Box_g + h_1\right) a_{k-1}\right) (s, 0, 0, 0) &= 0,  &  \text{for all } s \in ( a , b ),\end{aligned} \quad  \right.
\end{align}  for any multi-index $\alpha =(\alpha_1,\alpha_2,\alpha_3)\in \N^3$ with $|\alpha| := |\alpha_1|+|\alpha_2|+|\alpha_3| \leq N$.

%To recover the coefficients $h$ and $h_1$ in the semi-linear model \eqref{semi_linear_wave} via higer order nonlinearities, we recall the explicit expressions for $\phi$ and amplitude terms $a_{0,0},a_{1,0}$. 

According to \cite{FO}, we have the following solutions to \eqref{eqn : eikonal} and \eqref{eqn : transport} 
\begin{align}\label{eqn: the expression for phase phi and amptilute a_0, a_1}
 &\phi_{0}(s,y^{\prime} ) = 0,\quad \phi_{1}(s,y^{\prime}) = y_1, \quad \phi_{2}(s,y^{\prime}) = \sum_{i,j=1}^{3}H_{ij}(s)y^{i}y^{j},\\
 &a_{0,0}(s) =(\det Y(s))^{-\frac{1}{2}},\quad a_{1,0}(s) = b_{1,0}(s) + c_{1,0}(s),
\end{align}
where
\begin{equation}\label{eqn: Gaussian beam the expression for the subleading term}
 \begin{aligned}
   b_{1,0}(s) &= -\frac{\imath}{2}(\det Y(s))^{-\frac{1}{2}} \int_{0}^{s}(\Box_g a_{0})(\tilde{s},0)(\det Y(\tilde{s}))^{\frac{1}{2}} d\tilde{s},\\
   c_{1,0}(s) &= -\frac{\imath}{2}(\det Y(s))^{-\frac{1}{2}} \int_{0}^{s}h_1(\tilde{s},0) d\tilde{s}.
 \end{aligned}
\end{equation}
 Fix $s_0 \in (a,b)$. The matrix $H$ is a complex-valued symmetric matrix, solving the following Riccati equation for $s\in (a,b)$
\begin{equation}\label{eqn: Riccati}
 \frac{d}{ds} H+ HCH+ D = 0, \quad H(s_0) = H_0,\quad \Im H_{0}>0,
\end{equation}
where $C$ and $D$ are matrices with $D_{ij}= \partial^{2}_{ij} g^{11}/4$ and $C_{jj} = 2$ for $j=2,3$ and $C_{i,j} =0$ otherwise. The solvability of the Riccati equation was discussed in \cite[Section 8]{KKL}. Moreover, $Y(s)$ is a nondegenerate matrix on $(a,b)$ satisfying
  \begin{equation}\label{eqn: relation between Y and H}
  \det(\Im H(s))\cdot |\det Y(s)|^2 = \det(\Im(H_0)).
  \end{equation}

One can construct a source $f_{\lambda, \c{x},\c{\xi}} $ supported near $\c{x}$ to produce the desired Gaussian beam ansatz $\U_{\lambda,\c{x},\c{\xi}}$, which is supported near the light-like geodesic $\gamma_{\c{x},\c{\xi}}: (a,b)\rightarrow M$, where $a<0<b$. We parametrize the geodesic as
$$
\gamma(0) = \c{x} \in \rotom,\quad\gamma(s_{\into}) = \c{y}\quad \text{and} \quad\dot{\gamma}^{\flat}(0) = \c{\xi}.
$$
 Choose small $\rho^{\prime} > 0 $ and cut-off functions $\zeta_{\pm}\in C^{\infty}(\mathbb{R})$ such that
\begin{equation}
	\begin{aligned}
		\zeta_{-}(x_{0}) &=  \begin{cases}
			0,\quad x_{0}\le \c{x}_{0}-{\rho^{\prime}}\\
			1,\quad x_{0}\ge \c{x}_{0}-{\rho^{\prime}}/{2}
		\end{cases}
		\quad\text{and}\quad
		\zeta_{+}(x_{0}) &=  \begin{cases}
			0,\quad x_{0}\ge \c{x}_{0}\\
			1,\quad x_{0}\le \c{x}_{0}-{\rho^{\prime}}/{2}
		\end{cases}.
		\nonumber
	\end{aligned}
\end{equation}
Take the source to be
\begin{equation}\label{eqn: construction of source of the Gaussian beam}
	f_{\lambda, \c{x},\c{\xi}} := \zeta_{+}(\Box_g+h_1)\l(\zeta_{-} \mathcal{U}_{\lambda, \c{x},\c{\xi}}\r)\in C_{c}^{\infty}(\rotom).
\end{equation}
By \cite[(27)]{FO}, the linear wave $u_{\lambda,\c{x},\c{\xi}}$ generated by the source $f_{\lambda,\c{x},\c{\xi}}$ obeys 
 \begin{equation}\label{eqn: approximate property of the Gaussian beam}
  \l\|u_{\lambda,\c{x},\c{\xi}} - \zeta_{+}\mathcal{U}_{\lambda,\c{x},\c{\xi}} \r\|_{C((0,T)\times M^{\prime})} \lesssim \lambda^{-4}.
 \end{equation}

%We express the Taylor expansion of $u_{\lambda}(\c{y})$ in terms of $\lambda$ as follows. 
From equations \eqref{eqn : wkb ansatz}-\eqref{eqn : transport} and \eqref{eqn: approximate property of the Gaussian beam}, for $\c{y} = (s_{\into},0)$ in Fermi coordinate by Lemma \ref{lemma : fermi}, it follows that
$$
u_{\lambda,\c{x},\c{\xi}}(\c{y}) = \U_{\lambda,\c{x},\c{\xi}}(\c{y})+O(\lambda^{-4}) = \sum_{k=0}^{1} a_{k,0}(s_{\into},0)\lambda^{-k} + O(\lambda^{-2}).
$$
According to \eqref{eqn: the expression for phase phi and amptilute a_0, a_1} and \eqref{eqn: Gaussian beam the expression for the subleading term},  $a_{0,0}$ and $a_{1,0}$ are given by
$$
  a_{0,0}(s_{\into}) = (\det Y(s_{\into}))^{-\frac{1}{2}}, \quad\text{and}\quad a_{1,0}(s_{\into},0) = b_{1,0}(s_{\into},0)+c_{1,0}(s_{\into},0).
$$
By \cite[Lemma 5.3]{FLO}, one can choose $Y$ and $H$ in \eqref{eqn: relation between Y and H} such that $a_{0,0}(s_{\into}) = 1$. Then, 
\begin{align}\label{eqn: expansion of Gaussian beams at y}
 u_{\lambda,\c{x},\c{\xi}}(\c{y}) &= a_{0,0}(\c{y}) + \lambda^{-1} a_{1,0}(\c{y})+ O(\lambda^{-2})\\
 &=1+\lambda^{-1}\l( b_{1,0}(\c{y})-\frac{\imath}{2}\int_{0}^{s_{\into}}h_1(\gamma(s))ds\r) + O(\lambda^{-2}), \notag
\end{align}
where $b_{1,0}$ is independent of $h_1$.

\subsection{Broken geodesics in Lorentzian geometry}
% The information is carried along broken light-like geodesics emanating from $\rotom$ and back to $\rotom$. 

As in \cite{CLOP, FO}, the pointwise information of lower order coefficients at every $\mathbf{y} \in \D \setminus \mho$ is encoded in some ray transforms over a broken geodesic segments turning at $\mathbf{y}$. Then the values of the underlying coefficients at $\mathbf{y}$ may be recovered from the measurements of the ray information within $\mho$. Such broken geodesics are illustrated in Figure \ref{fig:figure1}.

\begin{figure}[htbp]
	\centering
	\begin{minipage}{0.45\textwidth}
		\centering
		\begin{tikzpicture}[scale=1]
			\draw[black] (0,0) ellipse (1 and 0.3);
			% 画底面椭圆（虚线表示被遮挡部分）
			\draw[black] (0,-5) ellipse (1 and 0.3);
			% 画两条竖直边
			\draw[black] (-1,0) -- (-1,-5);
			\draw[black] (1,0) -- (1,-5);
			
			%画Diamond
			\draw[blue] (1,0) -- (3.5,-2.5);
			\draw[blue] (1,-5) -- (3.5,-2.5);
			\draw[blue] (-1,0) -- (-3.5,-2.5);
			\draw[blue] (-1,-5) -- (-3.5,-2.5);
			
			%画Diamond中间的椭圆显得有立体感
			\draw[blue]  (0,-2.5) ellipse (3.5 and 0.3);

			%画x,y,z
			
			\fill (0.5,-4) circle (2pt); % 半径 3pt 的实心点
			\node[right] at (0.5,-4) {$x$}; % 在右边标注 x
			
			\fill (0,-1) circle (2pt); % 半径 3pt 的实心点
			\node[right] at (0,-1) {$z$}; % 在右边标注 x
			
			\fill (2,-2.5) circle (2pt); % 半径 3pt 的实心点
			\node[right] at (2,-2.5) {$y$}; % 在右边标注 x
			
			%连成线
			
			\draw[thick] (0.5,-4) -- (2,-2.5);
			\draw[thick] (0,-1) -- (2,-2.5);
			
		\end{tikzpicture}

	\end{minipage}
	\caption{The broken light-like geodesic from $x$ via $y$ to $z$}
    \label{fig:figure1}
\end{figure}

The existence of broken geodesics for each $\mathbf{y} \in \D \setminus \mho$ is clear for Minkowski spacetime. However, this is non-trivial for globally hyperbolic Lorentzian manifolds. 

Denote a world line by $\mu_{a}(s) = (s,a)$. The observation region $\rotom$ in \eqref{eqn: observation region mho} can be foliated by such world lines:
$$
  \rotom = \bigcup_{a\in B(0,d)}\mu_{a}(0,T).
$$
Thus, the causal diamond $\D$ in \eqref{eqn: causal diamond} can be foliated by
$$
  \D = \bigcup_{a_1,a_2\in B(0,d)} J^{+}(\mu_{a_1}(0,T)) \cap J^{-}(\mu_{a_2}(0,T)).
$$
We introduce the notations
\begin{align*}
   \mathbb{L}^{-}(\rotom) &:= \{(x,y) \in \rotom\times (\D\b\rotom);\  y\in J^{+}(x)\b I^{+}(x) \},\\
   \mathbb{L}^{+}(\rotom) &:= \{(y,z) \in (\D\b\rotom)\times \rotom;\   z\in J^{+}(y)\b I^{+}(y) \},
\end{align*}
where $y\in J^{+}(x)\b I^{+}(x)$ means that there is a future-pointing light-like geodesic from $x$ to $y$ and there is no future-pointing time-like geodesic from $x$ to $y$. 
%The time separation function $\tau(p,q)$ for $p<q$ is defined as the supremum of $$ \tau(p,q) := \sup_{\alpha}\int_{I}\sqrt{-g(\dot{\alpha}(s),\dot{\alpha}(s))}ds$$ where $\alpha: I\to M$ is a piecewise smooth causal curve  from $p$ to $q$.

   \cite[Lemma 4]{FO}  confirms the existence of broken geodesics.

\begin{lemma}\label{lemma: broken light-like geodesics}  
 For all $y\in \D\b\rotom$, there exists $z\in \rotom$ such that $(y,z)\in \mathbb{L}^{+}(\rotom)$. Moreover, for all $(y,z) \in \mathbb{L}^{+}(\rotom)$, there is $x\in \rotom$ such that $(x,y)\in \mathbb{L}^{-}(\rotom)$. The light-like geodesic $\gamma_1$ connecting $x$ to $y$ and the light-like geodesic $\gamma_{2}$ connecting $y$ to $z$ intersect only once.
\end{lemma}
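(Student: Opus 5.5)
The plan is to prove Lemma \ref{lemma: broken light-like geodesics} in two stages: first produce the future endpoint $z$, then the past endpoint $x$. Both stages use global hyperbolicity of $(M,g)$ (compact causal diamonds), the foliation of $\rotom$ by the world lines $\mu_a$, and the no-cut-point assumption in $\D$.

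\textbf{First claim.} Fix $y\in\D\b\rotom$. Since $y\in J^-(\rotom)$, there are $a\in B(0,d)$ and $t<T$ with $y\le\mu_a(t)$. Set
\[ s^\ast:=\inf\{s\in(0,T): \mu_a(s)\in J^+(y)\}. \]
I would take $z=\mu_a(s^\ast)$, shrinking slightly if needed so that $s^\ast\in(0,T)$; one can also vary $a$, and the ball $B$ is open. In a globally hyperbolic spacetime $J^+(y)$ is closed, so $z\in J^+(y)$. Because $I^+(y)$ is open, $z\notin I^+(y)$: otherwise a slightly earlier point $\mu_a(s^\ast-\epsilon)$ would still lie in $I^+(y)\subseteq J^+(y)$, contradicting minimality. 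Hence $z\in J^+(y)\b I^+(y)$, and $z\ne y$ since $y\notin\rotom$. By the standard fact for globally hyperbolic manifolds (e.g.\ \cite{O}), $z$ is then joined to $y$ by a future-pointing light-like geodesic without cut points, so $(y,z)\in\mathbb{L}^+(\rotom)$. One must also check that $s^\ast>0$. This holds because $y\notin\rotom$ and $y\in J^+(\rotom)$: the point $y$ lies to the causal future of some point of $\rotom$ with positive time, and one can choose $a$ so that the earliest reachable point is interior.

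\textbf{Second claim.} Given $(y,z)$, let $\gamma_2$ be the null geodesic from $y$ to $z$. Apply the time-reversed argument along a world line $\mu_{a'}$: set $x=\mu_{a'}(s_\ast)$ with
\[ s_\ast=\sup\{s:\mu_{a'}(s)\in J^-(y)\}. \]
The same closedness and openness arguments give $y\in J^+(x)\b I^+(x)$, so a null geodesic $\gamma_1$ runs from $x$ to $y$.

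\textbf{Intersection property.} Suppose $\gamma_1$ and $\gamma_2$ met at a point other than $y$. If they met at a point $w\neq y$ with $w$ on both, then concatenating segments would give a causal curve from $w$ back to $w$ through $y$, i.e.\ a closed causal curve unless the segments coincide. Closed causal curves are excluded by global hyperbolicity. If instead the segments coincide, then $\gamma_1$ continues $\gamma_2$ as a single geodesic. I would rule this out by choosing $a'$ so that $\dot\gamma_1(y)$ is not parallel to $\dot\gamma_2(y)$. This choice is available because the sup-construction over the open family of world lines yields directions filling an open set of the past light cone at $y$, while only one direction is bad.

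\textbf{Expected obstacle.} The main difficulty is the step ensuring that the extremal time lies strictly inside $(0,T)$ and that the resulting null geodesic stays inside $\D$ without conjugate or cut points. This is where the no-cut-point hypothesis is used, together with a compactness argument on $J^+(y)\cap J^-(\mu_a(T))$. I would first try to derive it from the achronal-boundary characterization $\partial J^+(y)=J^+(y)\b I^+(y)$, which is generated by null geodesics from $y$.
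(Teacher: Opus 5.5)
Your proposal is correct and follows essentially the standard argument behind \cite[Lemma 4]{FO}, which the paper cites without reproducing. That argument uses extremal observation times along the world lines $\mu_a$, closedness of $J^{\pm}$ and openness of $I^{\pm}$ to land in $J^{+}\setminus I^{+}$, and a choice of world line that avoids the single bad past null direction $-\dot\gamma_2(0)$ at $y$.

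Your emphasis is slightly misplaced, though. The interior-time and cut-point issues you flag as the main obstacle are automatic: the time function $x^0$ gives $s^\ast\in(0,T)$, and $J^{+}(y)\cap J^{-}(z)\subseteq\D$ together with $z\notin I^{+}(y)$ gives a cut-point-free segment. The segments $x\to y$ and $y\to z$ meet only at $y$ purely by causality. The step that actually needs an argument is the openness of the set of admissible past null directions at $y$. That follows from the no-cut-point hypothesis, because the perturbed segments from $y$ back to $\rotom$ still lie in $\D$.
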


\section{The $3$-wave linearization}\label{sec : 3 wave linearization}
%\label{sec: Recovery of the potential via cubic nolinearities}

In this section, we shall prove  {the uniqueness of $h_3$ and the uniqueness of $h_1$ on the support of $h_3$ and $h_2$ as follows.}
\begin{proposition}\label{prop : 3 wave}
	Let $(M, g)$, $\D$, $\mho$, $N^{(j)}$, $h_m^{(j)}$ and $L_{N^{(j)}}$ be as in Theorem \ref{thm : simplified theorem}. If $L_{N^{(1)}} = L_{N^{(2)}}$, then we have
	\begin{align}
		\label{h_3 is known}	h_3^{(1)} &= h_3^{(2)} && \mbox{in $\D$},\\ 
		\label{eqn: uniquness of the square of h_2}		\left|h_2^{(1)}\right| &= \left|h_2^{(2)}\right| && \mbox{away from $\supp h_3^{(j)}$}.\end{align} 
	Moreover, we have
	\begin{align}	\label{eqn: Recovery of the potential via cubic nonliearities}
		h_1^{(1)} &= h_1^{(2)}  && \mbox{on $\supp h_3^{(j)}$},\\
		\label{eqn: Recovery of the potential via quadratic nonliearities}
	h_1^{(1)}  &= h_1^{(2)}   &&\mbox{on $\supp(h_2)\b\supp(h_3)$}. 
	\end{align} 
	
\end{proposition}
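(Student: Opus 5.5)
We would follow the three-fold linearization of \cite{KLU, CLOP, LUW}, adapted to the curved background. Take sources $f = \sum_{j=1}^3 \e_j f_j$, where each $f_j \in I^{\mu+3/2}(\Sigma(x_j,\zeta_j,s_0))$ is supported in $\mho$ as in Proposition \ref{prop: linear wave}. Differentiate $L_{N}(f)$ in $\e_1,\e_2,\e_3$ at $0$. Writing $u_j = Q(f_j)$ for the linear waves of \eqref{eqn: linear wave equation}, we get
\[
\partial_{\e_1}\partial_{\e_2}\partial_{\e_3} u|_{\e=0} = -6Q\big(h_3 u_1u_2u_3\big) + 4Q\Big(h_2 \sum_{\sigma} u_{\sigma(1)}\, Q\big(h_2 u_{\sigma(2)}u_{\sigma(3)}\big)\Big),
\]
with the sum over the three pairings; here $Q$ is the solution operator of $\Box_g+h_1$. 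Since $L_{N^{(1)}}=L_{N^{(2)}}$, the one-fold data give equal $Q^{(1)}f = Q^{(2)} f$ in $\mho$. The three-fold data then coincide in $\mho$ for both $N^{(j)}$.

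Fix $y\in\D\setminus\mho$. By Lemma \ref{lemma: broken light-like geodesics}, pick $z\in\mho$ and a null covector $\eta$ at $y$ whose geodesic reaches $z$ with no cut points. Choose $x_j,\zeta_j$ so that the three geodesics meet only at $y$, transversally, with $\eta$ in the span of their covectors; this can be done by perturbing the backward geodesic from $y$. Away from $y$ the product $u_1u_2u_3$ is handled by Proposition \ref{prop: Greenleaf-Uhlmann}. At $y$ it is conormal to the point $\{y\}$, hence produces a spherical Lagrangian wave along $\Lambda_y$ (the flowout of $N^*\{y\}$). Near $z$ this is a Lagrangian distribution with possibly nontrivial Keller--Maslov factor. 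The $h_2$ terms are more singular at leading order, by one power of the symbol, than the $h_3$ term, and their principal symbol at $(z,\eta^z)$ involves $h_2(y)^2$ times an explicit nonvanishing kinematic factor. The factor $(\xi_j+\xi_k)$-dependent symbol of $\Box_g^{-1}$ on $N^*(Y_j\cap Y_k)$ makes this factor a rational function of the $\xi_j$. The $h_3$ term contributes $h_3(y)$ times the product of the symbols, with a different homogeneity. Comparing symbols, after dividing out the common transport factors and Maslov phases, which are identical for both $N$'s since $g$ is fixed, we obtain $h_3^{(1)}(y)=h_3^{(2)}(y)$. Varying $\zeta_j$ separates the two contributions by their distinct dependence on the covectors. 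Where $h_3^{(j)}=0$ near $y$, we instead obtain $(h_2^{(1)})^2=(h_2^{(2)})^2$, and $h_2$ is real, which gives \eqref{eqn: uniquness of the square of h_2}.

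For $h_1$ we use the subleading symbol. By \eqref{eqn: lower order symbols}, each incoming wave carries the factor $1-\imath\lambda^{-1}\int_{\gamma_j} h_1$. The outgoing wave, a solution of $(\Box_g+h_1)w=$ source near $y$, acquires $-\imath\int_{\gamma_{y\to z}}h_1$ by the same transport argument, now on the nonconormal $\Lambda_y$ and computed in local parametrizations with the Maslov factor constant along bicharacteristics. The $h_1$-dependence thus enters as $\int_{x_j\to y}h_1+\int_{y\to z}h_1$, a broken light-ray integral. There are also lower order terms from derivatives of $h_3$ and $b$-type terms, but these are independent of $h_1$ once $h_3$ is known. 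Equality of the subleading symbols therefore gives equality of the sum of the three broken ray transforms, taken with weights. We then vary $y$ along the outgoing geodesic while keeping $z$ fixed, which is possible by the no-cut-point assumption. Differentiating in the turning point along $\gamma_{y\to z}$, as in \cite{CLOP, FO}, yields $h_1^{(1)}(y)=h_1^{(2)}(y)$ whenever $h_3(y)\neq0$. Closing the set then gives \eqref{eqn: Recovery of the potential via cubic nonliearities}. On $\supp h_2\setminus\supp h_3$ the same argument applies with the $h_2^2$ leading symbol, which gives \eqref{eqn: Recovery of the potential via quadratic nonliearities}. There the inner intermediate wave $Q(h_2u_ju_k)$ also carries an $h_1$-integral along a short segment, and these segments shrink as $y$ varies, so they vanish in the derivative.

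The main obstacle is the subleading symbol computation for the outgoing Lagrangian wave on $\Lambda_y$. It is no longer a distorted plane wave, so \eqref{eqn: lower order symbols} must be rederived with half-density and Keller--Maslov factors. The difficult part is to show that all $h_1$-independent subprincipal contributions coincide for $N^{(1)}$ and $N^{(2)}$. Separating the $h_3$ and $h_2^2$ contributions at generic $y$ is a secondary difficulty.
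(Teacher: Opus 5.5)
There are two genuine problems; the second one breaks the recovery of $h_1$.

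\textbf{The orders are reversed.} You claim the $h_2$ terms are more singular than the $h_3$ term. The opposite holds. Since $\eta_{(j)}+\eta_{(k)}$ is not null, the inner $Q$ in $Q(h_2u_ju_k)$ acts near $(y,\eta_{(j)}+\eta_{(k)})$ as an elliptic operator of order $-2$. Hence the $h_2^2$ part $\mathcal W_{(123)}$ lies in $I^{3\mu-3/2}$, while the cubic part $\mathcal V_{(123)}$ lies in $I^{3\mu+1/2}$: the $h_3$ term is two orders \emph{more} singular. With the correct ordering, $h_3(y)$ is read directly from the leading symbol, and no separation by varying covectors is needed. The factor $h_2(y)^2$ appears at leading order only where $h_3\equiv0$ near $y$.

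This ordering also matters for the $h_1$ step on $\supp h_3$. The $h_1$-carrying remainder $\mathcal V^{\rem}_{(123)}\in I^{3\mu-1/2}$ still lies one order above every $h_2$-contribution, so those contributions cannot pollute it; under your ordering they would. The $h_1$-independent subprincipal terms you flag as the main difficulty are sidestepped in the paper. The free wave $\mathcal V^{\fre}_{(123)}$, solving $\Box_g\mathcal V^{\fre}_{(123)}=-6h_3u^{\fre}_{(1)}u^{\fre}_{(2)}u^{\fre}_{(3)}$, is fully determined once $h_3$ is known, so it cancels exactly from the data.

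\textbf{The outgoing integral is never isolated.} You end with a weighted sum of the outgoing integral and the three incoming integrals, then differentiate in the turning point along $\gamma_{\out}$ with $z$ fixed. This does not localize. As $y$ moves along $\gamma_{\out}$, the incoming null geodesics from $\mho$ must move as well, because $\gamma_{\out}$ is transverse to the light cone of $x_{(j)}$ and leaves it at once. The derivative of $\int_{\gamma_{(j)}}h_1$ then involves $h_1$ along the whole incoming geodesics, not just $h_1(y)$.

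The missing idea is the scaling parameter $r$ in the construction. The incoming covectors at $y$ are $\eta_{(j)}=r^{-2}\kappa_{(j)}\xi_{(j)}$, so the factor $1-\imath\lambda^{-1}\int_{\gamma_{(j)}}h_1$ you quote has $\lambda\sim r^{-2}$. For the microlocalized quadratic-interaction wave $v_{(123)}$, the paper obtains
\[
\sigma[v^{\rem}_{(123)}](z,\zeta)=-\imath\,\sigma[v_{(123)}](z,\zeta)\Bigl(\int_0^{s_{\out}}h_1(\gamma_{\out}(s))\,ds+\sum_{j=1}^{3}r^{2}\kappa_{(j)}^{-1}\int_{\gamma_{(j)}}h_1\Bigr).
\]
On $\supp h_3$ the analogous formula holds with an $O(r^2)$ error. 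Letting $r\to0$ removes the incoming contributions and leaves the truncated, unbroken integral from $y$ to $z$. Only this integral is differentiated in $y$ along $\gamma_{\out}$ to obtain $h_1(y)$.

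Relatedly, there is no ``short segment'' carrying an $h_1$-integral inside $Q(h_2u_ju_k)$. Nothing propagates at $(y,\eta_{(j)}+\eta_{(k)})$, and $u^{\rem}_{(jk)}$ there simply inherits the incoming remainders.
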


The strategy of the proof is to perform on \eqref{eqn : semilinear wave with power series} the linearization with $3$ distorted plane waves, and then conduct the principal and subprincipal symbol calculi. The symbol calculi lead to the uniqueness in Proposition \ref{prop : 3 wave}. This strategy was used  in \cite{Chen-Lu-Zhang-2025} for the flat spacetime. Nonetheless, the curved geometry of $(M, g)$ in this paper complicates the argument by generating non-trivial Keller--Maslov indices of principal symbols.

\subsection{The $3$-wave interactions}\label{sec: three wave interactions} 
To begin with, we briefly review the rationale of the $3$-wave linearization on globally hyperbolic Lorentzian manifolds. The reader is referred to \cite{CLOP, FO, Chen-Lu-Zhang-2025} for more details. %The process of three-wave interactions is discussed in  \cite[Section 3]{}.

Fix a point $\c{y}\in \D\b\rotom$. Lemma \ref{lemma: broken light-like geodesics} guarantees that there exists a point $\c{z}\in \rotom$ and a future-pointing light-like geodesic $\gamma_{\out}(s)$ such that for some $s_{\out}>0$
\begin{equation}\label{eqn: light-like geodesic back to mho}
\gamma_{\out}(0) = \c{y},\quad \gamma_{\out}(s_{\out}) = \c{z}.
\end{equation}
The bicharacteristic $\beta_{\out}(s)$ associated with $\gamma_{\out}$ is given by
\begin{equation}\label{eqn: bicharacteristics}
 \beta_{\out}(s) = \l(\gamma_{\out}(s),\dot{\gamma}^{\flat}_{\out}(s) \r).
\end{equation} where $\cdot^\flat$ stands for the musical isomorphism from tangent vectors to cotangent vectors. In addition, we denote \[  \c{\eta} := \dot{\gamma}_{\out}^{\flat}(0),\quad \c{\zeta} := \dot{\gamma}^{\flat}_{\out}(s_{\out}).\]
Furthermore, Lemma \ref{lemma: broken light-like geodesics} yields a point $\c{x}\in \rotom$ and a future-pointing light-like geodesic $\gamma_{\into}(s)$ such that for some $s_{\into}>0$
$$
 \gamma_{\into}(0) = \c{x}, \quad \gamma_{\into}(s_{\into}) = \c{y}, \quad\text{and}\quad \c{\nu} := \dot{\gamma}_{\into}^{\flat}(s_{\into}) \neq \c{\eta}.
$$

In a small neighbourhood of $\c{y}$, we adopt local coordinates such that
$$
  \c{\nu} = (-1,1,0,0),\quad\text{and}\quad \c{\eta} = (-1,\cos\theta,\sin\theta,0),\quad \mbox{for some $\theta\in(0,2\pi)$}.
$$
  For small $r>0$, we take small perturbations of $\c{\nu}$ 
$$
  \c{\nu}_{(2)} := (-1,\sqrt{1-r^{2}},r,0),\quad\text{and}\quad   \c{\nu}_{(3)} := (-1,\sqrt{1-r^{2}},-r,0).
$$
Since each $\c{\nu}_{(j)}$ is light-like, there exist a point $\c{x}_{(j)}\in \rotom$ and a light-like geodesic $\gamma_{(j)}$   such that
$$
   \gamma_{(j)}(0) = \c{x}_{(j)},\quad \gamma_{(j)}(s_{\into}) = \c{y},\quad \dot{\gamma}_{(j)}^{\flat}(s_{\into}) = \c{\nu}_{(j)}.
$$
If we flip the sign of $\c{\nu}$ and write
\begin{equation}\label{eqn: sign of the light-like covector}
\c{\xi}_{(1)} := -\c{\nu}, \quad  \c{\xi}_{(2)} := \c{\nu}_{(2)},    \quad\c{\xi}_{(3)} := \c{\nu}_{(3)},
\end{equation}
\cite[Lemma 1]{CLOP} shows that there exist $\{\kappa_{(j)} : j = 1, 2, 3\}$  such that
\begin{equation}\label{eqn: linear relation}
 \c{\eta} = r^{-2}\underbrace{(2 b(\theta)+ O(r))}_{{\kappa}_{(1)}}\c{\xi}_{(1)}+r^{-2}\underbrace{(b(\theta)+ O(r))}_{{\kappa}_{(2)}}\c{\xi}_{(2)}+r^{-2}\underbrace{(b(\theta)+ O(r))}_{{\kappa}_{(3)}}\c{\xi}_{(3)},
\end{equation}
where $b(\theta) := 1-\cos\theta$.
For convenience, we use the shorthand notation
\begin{equation}\label{eqn: notation of the linear relation}
  \c{\eta}_{(j)} := r^{-2}\kappa_{(j)}\c{\xi}_{(j)}.
\end{equation}

The useful inputs for inversion of coefficients  in \eqref{eqn : semilinear wave with power series} are of the form 
\begin{align} \varepsilon_{(1)}   f_{(1)} +  \varepsilon_{(2)}   f_{(2)} + \varepsilon_{(3)}  f_{(3)}  \label{eqn : 3 conormal source}\end{align}
where each $\varepsilon_{(j)} > 0$ is a sufficiently small number and each $ f_{(j)}\in H^{s}_{\comp}(\rotom)$  satisfies
\begin{itemize}
\item $f_{(1)}\in I^{\mu+\frac{3}{2}}(\Sigma(\c{x}_{(1)},-\dot{\gamma}_{(1)}(0),s_0))$;
 
 \item For $j=2,3$,
 $f_{(j)}\in I^{\mu+\frac{3}{2}}(\Sigma(\c{x}_{(j)},\dot{\gamma}_{(j)}(0),s_0))$;

\item These sources are causally independent, i.e. 
$$
\supp\l(f_{(j)}\r) \cap J^{+}\l(\supp\l(f_{(k)}\r)\r)  =\varnothing\quad\text{for all } 1\le j\neq k\le 3.
$$ 
 \item The principal symbol of each $f_{(j)}$ is positively homogeneous of degree $\mu+5/2$.
\end{itemize}

 Let $u_{(j)}, j= 1, 2, 3$ be the solution to the linear wave equation \eqref{eqn: linear wave equation} with source $f_{(j)}$.
From the discussion in Section \ref{sec: Distorted plane waves}, we know that
  \begin{itemize}
 	\item each $u_{(j)}$ lies in $I^{\mu}\l(N^{\ast}\{\c{x}_{(j)}\} , N^{\ast}K_{(j)} \r)$, where $K_{(j)}$ takes the form 
 	\begin{align*}
 		K_{(1)} &:= K_{(1)}\l(\c{x}_{(1)},-\dot{\gamma}_{(1)}(0),s_0 \r),\\  K_{(j)} &:= K_{(j)}\l(\c{x}_{(j)},\dot{\gamma}_{(j)}(0),s_0 \r),\quad j=2,3;
 	\end{align*}
 	\item the principal symbol of $u_{(j)}$, restricted to $N^{\ast}K_{(j)} \b N^{\ast}\{\c{x}_{(j)}\}$, is positively homogeneous of degree $\mu+1$;
 	\item $u_{(j)}$ admits the following decomposition  as in \eqref{eqn: equation of the free wave} and \eqref{eqn: definition of the remiander wave}
 	\begin{equation*}%\label{eqn: decomposition of u_j}
 		u_{(j)} = u_{(j)}^{\fre} + u_{(j)}^{\rem};
 	\end{equation*}
 	%where $u_{(j)}^{\fre}$ is independent of the unknown coefficients to be recovered.
 	\item the remainder $u_{(j)}^{\rem}$ belongs to
$ I^{\mu-1}\l(N^{\ast}\{\c{x}_{(j)}\} , N^{\ast}K_{(j)} \r);
$
 	\item the principal symbol of $u_{(j)}^{\rem}$, restricted to $N^{\ast}K_{(j)} \b N^{\ast}\{\c{x}_{(j)}\}$, is positively homogeneous of degree $\mu$;

\item furthermore,  the homogeneity of the principal symbols of $u_{(j)}$ and $u_{(j)}^{\rem}$, together with \eqref{eqn: lower order symbols}, yields 
\begin{align}
\label{relation between the principal symbol of free wave and remiander wave by homogeneity}  \sigma\l[u_{(j)}^{\rem}\r](\c{y},r^{-2}\kappa_{(j)}\c{\xi}_{(j)}) &= \l(r^{-2}\kappa_{(j)}\r)^{\mu}\sigma\l[u_{(j)}^{\rem}\r]
	(\c{y},\c{\xi}_{(j)}) \\
	\notag  &=-\imath\l(r^{-2}\kappa_{(j)}\r)^{\mu}\sigma\l[u_{(j)}\r]
	(\c{y},\c{\xi}_{(j)})\int_{\gamma_{(j)}} h_1\\
	%\label{relation between the principal symbol of free wave and remiander wave by homogeneity}  
\notag	 &=-\imath\l(r^{-2}\kappa_{(j)}\r)^{-1}\sigma\l[u_{(j)}\r]
	(\c{y},r^{-2}\kappa_{(j)}\c{\xi}_{(j)})\int_{\gamma_{(j)}} h_1,
\end{align}
where we denote the truncated integrals of  $h_1$ along   $\gamma_{(j)}$ by
\begin{align*}
	\int_{\gamma_{(1)}}h_1 &:= \int_{0}^{-s_{\into}}h_1\l(\gamma_{(1)}(-s)\r)ds,\\ 
	\int_{\gamma_{(j)}}h_1 &:= \int_{0}^{s_{\into}}h_1\l(\gamma_{(j)}(s)\r)ds,\quad j=2,3.
\end{align*}

 \end{itemize}

% If $I,J$ are non-empty subsets of $\{1,2,3\}$ with $I\cap J = \emptyset$, then $\cap_{i\in I}K_{(i)}$ and $\cap_{j\in J}K_{(j)}$ intersect transversally.

%To exploit the $3$-wave interaction, we enter in  \eqref{eqn : semilinear wave with power series} the source
%$$
 % f = \e_{(1)}f_{(1)}+\e_{(2)}f_{(2)}+\e_{(3)}f_{(3)},
%$$
The solution $u(\e)$ to \eqref{eqn : semilinear wave with power series}, associated with source \eqref{eqn : 3 conormal source}, is smooth in $\e$.  The following derivatives of $u$ at $\e=0$  
\begin{align*}\begin{aligned}
   u_{(i)} &:= \partial_{\e_{(i)}}u(\e)|_{\e=0}, && i=1,2,3,\\
   u_{(jk)} &:=\partial_{\e_{(j)}\e_{(k)}}^{2}u(\e)|_{\e=0},&& \{j, k\} \subset \{1, 2, 3\},\\
   u_{(123)} &:= \partial_{\e_{(1)}\e_{(2)}\e_{(3)}}^{3}u(\e)|_{\e=0},&&
\end{aligned}\end{align*}
  solve the  linearized system
\begin{align}
   \label{one-fold linearization} \Box_g u_{(i)} + h_1 u_{(i)} &= f_{(i)},\\
   \label{two-fold linearization} \Box_g u_{(jk)}+ h_1 u_{(jk)} &= -2h_2 u_{(j)} u_{(k)},\\
   \label{three-fold linearization} \Box_g u_{(123)}+ h_1 u_{(123)} &= -h_2\sum_{ \{i,j,k\} = \{1,2,3\} } u_{(i)} u_{(jk)} -6h_3 u_{(1)} u_{(2)} u_{(3)}.
\end{align}

\subsection{Partial recovery of cubic and quadratic terms}
%In this section, we prove the uniqueness of the cubic nonlinearity $h_3$. Moreover, we show that $(h_{2})^{2}$ is uniquely determined away from the support of $h_3$, which will be useful in the recovery of $V$ on the support of $h_2$.
The inversion of $h_2$ and $h_3$ may be viewed as the inverse source problem of \eqref{three-fold linearization}.

 To solve this problem, we first conduct the symbol calculus of $u_{(123)}$ in \eqref{three-fold linearization}. By the principle of superposition, we decompose $u_{(123)}$ as
\begin{equation}\label{decomposition of three-fold linearization}
 u_{(123)} := \mathcal{V}_{(123)} + \mathcal{W}_{(123)},
\end{equation}
where $\mathcal{V}_{(123)}$ and $\mathcal{W}_{(123)}$ respectively solve  
\begin{align}
  \label{higher-order three-fold linearization} \l(\Box_g +h_1 \r)\mathcal{V}_{(123)} &= -6h_3 u_{(1)} u_{(2)} u_{(3)},\\
  \label{lower-order three-fold linearization} \l( \Box_g +h_1\r)\mathcal{W}_{(123)} & =  -\sum_{ \{i,j,k\} = \{1,2,3\} } h_2 u_{(i)} u_{(jk)}.
\end{align}

By the  wavefront set calculus for products of distributions,  e.g. \cite[(57)]{KLU} and
 \cite[Theorem 8.2.10]{H1},   $\WF (u_{(ij)}u_{(k)})$ and $\WF(u_{(1)}u_{(2)}u_{(3)})$ are contained in 
\begin{equation}\label{eqn: wavefront set of three-fold linearization}
  N^{\ast}\l( K_{(1)}\cap K_{(2)} \cap K_{(3)}\r) \cup \l( \bigcup_{1\le j < k \le 3} N^{\ast}\l(K_{(j)} \cap K_{(k)} \r)\r) \cup \l(\bigcup_{j=1}^{3} N^{\ast}K_{(j)}\r).
\end{equation}

Let $(a,b)$ be the maximal interval of existence for the light-like geodesic $\gamma_{\out}$ defined in \eqref{eqn: light-like geodesic back to mho}. Namely, $\{\gamma_{\out}(s) : s\in(a,b)\}$ form an inextendible geodesic. We denote the image of the associated bicharacteristics $\beta_{\out}$ in \eqref{eqn: bicharacteristics}  by
\begin{equation}\label{eqn: image of the bicharacteristics}
  \Gamma := \{\beta_{\out}(s) : s\in (a,b)\}.
\end{equation}
By the fact that the summation of two light-like covectors are not light-like, and
$$
  N^{\ast}_{x}\l(K_{(j)}\cap K_{(l)}\r) = N^{\ast}_{x} K_{(j)} \oplus N^{\ast}_{x}K_{(l)},\quad 1\le j \neq l\le 3,
$$
 the following properties hold:
\begin{align}
  \label{eqn: first wavefront set property} \Gamma &\cap N^{\ast}\l( K_{(1)} \cap K_{(2)}\cap K_{(3)}\r) = \{(\c{y},\c{\eta})\};\\
 \label{eqn: second wavefront set property}  \Gamma &\cap \l(\bigcup_{1\le j < k \le 3} N^{\ast}\l(K_{(j)} \cap K_{(k)} \r)\r)  = \emptyset,\quad\text{and}\quad \Gamma \cap  \l(\bigcup_{j=1}^{3} N^{\ast}K_{(j)}\r)= \emptyset.
\end{align}

Denote $\Lambda_{(123)} :=  N^{\ast}\l( K_{(1)}\cap K_{(2)} \cap K_{(3)}\r)$, and let $\Lambda_{(123)}^{g}$ be its future flowout. The following proposition, due to \cite[Proposition 3.2 and 3.3]{Hintz-Uhlmann-Zhai-IMRN} and \cite[Proposition 3.7]{LUW}, analyses the microlocal structures of $\mathcal{V}_{(123)}$ and $\mathcal{W}_{(123)}$.
\begin{proposition}\label{prop: principal symbol of three wave interaction}
Let $(\c{y},\c{\eta})$ and $(\c{z},\c{\zeta})$ be as in \eqref{eqn: light-like geodesic back to mho}.   Then   $\mathcal{V}_{(123)}$ and $\mathcal{W}_{(123)}$, microlocally restricted to $T^\ast M \setminus \cup_{j=1}^{3}N^{\ast}K_{(j)}$, are paired conormal distributions,
\begin{equation}\label{order of the two waves genereated by three-fold linearization}
  \mathcal{V}_{(123)} \in I^{3\mu+\frac{1}{2}}\l(\Lambda_{(123)},\Lambda_{(123)}^{g} \r) \quad\text{and}\quad \mathcal{W}_{(123)} \in I^{3\mu-\frac{3}{2}}\l(\Lambda_{(123)},\Lambda_{(123)}^{g} \r).
\end{equation}
Moreover, $ u_{(123)}$, microlocally restricted to $T^\ast M \setminus \cup_{j=1}^{3}N^{\ast}K_{(j)}$, obeys the following :
\begin{itemize} 
\item 
       If $h_3(\c{y}) \neq 0$, then   $ u_{(123)} \in I^{3\mu+1/2}\l( \Lambda_{(123)},\Lambda_{(123)}^{g}\r) $ with principal symbol 
       \begin{multline}\label{expression for the principal symbol of three-fold linearization}
        \sigma\l[ u_{(123)}\r]\l( \c{z},\c{\zeta}\r) = \sigma\l[\mathcal{V}_{(123)}\r](\c{z},\c{\zeta})\\
        =-6(2\pi)^{-2}\sigma\l[Q\r]\l(\c{z},\c{\zeta},\c{y},\c{\eta} \r) h_{3}(\c{y})\prod_{j=1}^{3}\sigma\l[u_{(j)}\r]\l(\c{y},r^{-2}\kappa_{(j)}\c{\xi}_{(j)} \r);
       \end{multline}
\item 
       If $h_3 \equiv 0$ in a neighbourhood of $\c{y}$, then   we have  in this neighbourhood that $ u_{(123)} \in I^{3\mu-3/2}\l( \Lambda_{(123)},\Lambda_{(123)}^{g}\r)$ with principal symbol
          \begin{multline}\label{expression for the lower order principal symbol of three-fold linearization}
          \sigma\l[ u_{(123)}\r](\c{z},\c{\zeta}) = \sigma\l[\mathcal{W}_{(123)} \r](\c{z},\c{\zeta})\\
          = 2(2\pi)^{-2}h_{2}^{2}(\c{y})\mathcal{G}_{1}(\c{y},\c{\eta}) \sigma\l[Q\r]\l(\c{z},\c{\zeta},\c{y},\c{\eta} \r)\prod_{j=1}^{3}\sigma\l[u_{(j)}\r]\l(\c{y},r^{-2}\kappa_{(j)}\c{\xi}_{(j)} \r),
         \end{multline}
       where $\mathcal{G}_{1}$ is a nonvanishing factor given by
       \begin{equation}\label{eqn: factor derived from the symbol of Q}
          \mathcal{G}_{1}(\c{y},\c{\eta}) := \sum_{\{i,j,k\} = \{1,2,3\} } \l(\sigma\l[ \Box_g\r]\l(\c{y}, r^{-2}\kappa_{(i)}\c{\xi}_{(i)} + r^{-2}\kappa_{(j)}\c{\xi}_{(j)} \r)\r)^{-1}.
       \end{equation}
       
\end{itemize}   
\end{proposition}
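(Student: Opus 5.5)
The plan is to treat the two source terms in \eqref{higher-order three-fold linearization}--\eqref{lower-order three-fold linearization} separately: compute the microlocal structure of each right-hand side near $(\c{y},\c{\eta})$, then propagate along $\Gamma$ with the causal inverse $Q$ using Proposition \ref{prop: linear wave}. By \eqref{eqn: first wavefront set property}--\eqref{eqn: second wavefront set property}, the only point of the source wavefront sets that feeds singularities into $\Gamma$ is $(\c{y},\c{\eta})$. Hence $\sigma[u_{(123)}](\c{z},\c{\zeta})$ equals $\sigma[Q](\c{z},\c{\zeta},\c{y},\c{\eta})$ times the principal symbol of the relevant source at $(\c{y},\c{\eta})$. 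The $(2\pi)^{-2}$ factors come from the normalizations in the conormal symbol definition.

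\textbf{Step 1 (cubic source).} Away from $\bigcup_j N^*K_{(j)}$ and near $\c{y}$, the $K_{(j)}$ are smooth hypersurfaces intersecting transversally, and each $u_{(j)}\in I^{\mu}(N^*K_{(j)})$ there. Applying Proposition \ref{prop: Greenleaf-Uhlmann} twice (first to $u_{(1)}u_{(2)}$, then to its product with $u_{(3)}$) gives
\[
u_{(1)}u_{(2)}u_{(3)}\in I^{3\mu+2}(\Lambda_{(123)}),
\]
microlocally away from the lower strata. Its symbol at $(\c{y},\c{\eta})$ is $\prod_j\sigma[u_{(j)}](\c{y},\c{\eta}_{(j)})$, using the decomposition \eqref{eqn: linear relation}. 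Here the Keller--Maslov bundle is trivial because everything is conormal near $\c{y}$. Multiplying by $-6h_3$ and applying $Q\in I^{-3/2}(N^*\Delta,\Lambda_g)$ via \cite[Prop.~6.6]{MU} gives $\mathcal{V}_{(123)}\in I^{3\mu+1/2}(\Lambda_{(123)},\Lambda^g_{(123)})$. Then \eqref{eqn: causal inverse as a linear operator} yields \eqref{expression for the principal symbol of three-fold linearization}. Along the flowout the symbol is transported by $Q$, so any nontrivial Maslov factor is absorbed into $\sigma[Q]$.

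\textbf{Step 2 (quadratic source).} First, $u_{(jk)}=-2Q(h_2u_{(j)}u_{(k)})$. Its source lies in $I^{2\mu+1}(N^*(K_{(j)}\cap K_{(k)}))$ with symbol $h_2\sigma[u_{(j)}]\sigma[u_{(k)}]$. The key observation is that on $N^*(K_{(j)}\cap K_{(k)})$ the covector $\c{\eta}_{(j)}+\c{\eta}_{(k)}$ is not light-like, so $\Box_g$ is elliptic there. Microlocally near such points, $u_{(jk)}$ is therefore conormal of order $2\mu+1-2=2\mu-1$, with symbol
\[
-2h_2\,\sigma[\Box_g]^{-1}(\c{y},\c{\eta}_{(j)}+\c{\eta}_{(k)})\,\sigma[u_{(j)}]\sigma[u_{(k)}].
\]
One more application of Proposition \ref{prop: Greenleaf-Uhlmann} gives $h_2u_{(i)}u_{(jk)}\in I^{3\mu}(\Lambda_{(123)})$. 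Summing over $\{i,j,k\}=\{1,2,3\}$, with the minus sign from \eqref{lower-order three-fold linearization}, produces the factor $2h_2^2\mathcal{G}_1$. Applying $Q$ then gives $\mathcal{W}_{(123)}\in I^{3\mu-3/2}$ and \eqref{expression for the lower order principal symbol of three-fold linearization}.

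\textbf{Step 3 (assembling the cases).} If $h_3(\c{y})\neq0$, $\mathcal{W}_{(123)}$ has lower order, so $\sigma[u_{(123)}]=\sigma[\mathcal{V}_{(123)}]$. If $h_3\equiv0$ near $\c{y}$, then $\mathcal{V}_{(123)}$ is smooth microlocally near $\Gamma$, since its source vanishes near $\c{y}$ and no other source point reaches $\Gamma$.

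\textbf{Main obstacle.} The hardest step is Step 2. The factor $u_{(jk)}$ is a paired distribution, not a clean conormal one, and the product in Proposition \ref{prop: Greenleaf-Uhlmann} is only valid away from $N^*K_{(j)}\cup N^*K_{(k)}$ and away from the characteristic flowout of $N^*(K_{(j)}\cap K_{(k)})$. I would handle this with a microlocal partition of unity. Near $(\c{y},\c{\eta})$, the pieces of $u_{(jk)}$ carrying wavefront in $N^*K_{(j)}$ or $N^*K_{(k)}$ combine with $u_{(i)}$ only into covectors in the lower strata, and \eqref{eqn: second wavefront set property} keeps those off $\Gamma$. The remaining elliptic piece is handled by a parametrix for $\Box_g$. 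Finally, $\mathcal{G}_1\neq0$ for small $r$ should follow from the explicit asymptotics of $\kappa_{(j)}$ in \eqref{eqn: linear relation}, as in \cite{LUW,Chen-Lu-Zhang-2025}.
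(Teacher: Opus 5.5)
Your proposal is correct and takes essentially the same route as the paper. The paper does not reprove this proposition; it takes it from \cite[Proposition 3.7]{LUW} and \cite[Propositions 3.2 and 3.3]{Hintz-Uhlmann-Zhai-IMRN}, whose argument matches yours: Greenleaf--Uhlmann product calculus at $(\c{y},\c{\eta})$, ellipticity of $\Box_g$ on $N^\ast(K_{(j)}\cap K_{(k)})$ away from $N^\ast K_{(j)}\cup N^\ast K_{(k)}$, and Melrose--Uhlmann propagation by $Q$ along $\Gamma$. The paper later makes the ellipticity step explicit with the cut-offs $\chi_{(ij)}$. Only the constants need tidying: with the paper's normalization, your Step 2 symbol for $u_{(jk)}$ carries an extra factor $(2\pi)^{-1}$, cf.\ \eqref{eqn: principal symbol of two-fold linearization}, and this is one of the two factors giving the $(2\pi)^{-2}$ in the final formula.
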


With Proposition \ref{prop: principal symbol of three wave interaction} in hand, we are ready to retrieve $h_3$ fully and $h_2$ partially.

\begin{proof}[Proof of \eqref{h_3 is known}-\eqref{eqn: uniquness of the square of h_2}]
The coincidence of the source-to-solution maps implies that
\begin{multline}\label{eqn: three-fold linearization with twice observation}
 u_{(123)}^{(1)} := \l.\partial_{\e_{(1)}}\partial_{\e_{(2)}}\partial_{\e_{(3)}}L_{h_1^{(1)},H^{(1)}}\l( \e_{(1)}f_{(1)}+\e_{(2)}f_{(2)}+\e_{(3)}f_{(3)}\r)\r|_{\epsilon = 0} \\
 =\l.\partial_{\e_{(1)}}\partial_{\e_{(2)}}\partial_{\e_{(3)}}L_{h_1^{(2)},H^{(2)}}\l( \e_{(1)}f_{(1)}+\e_{(2)}f_{(2)}+\e_{(3)}f_{(3)}\r)\r|_{\epsilon = 0} =: u_{(123)}^{(2)},
\end{multline}
where $\epsilon = (\e_{(1)},\e_{(2)},\e_{(3)})$. By equation \eqref{three-fold linearization} and Proposition \ref{prop: principal symbol of three wave interaction}, we have the equality of the principal symbols
\begin{equation}\label{eqn: equality of the principal symbol of u_123}
  \sigma\l[u_{(123)}^{(1)}\r](\c{z},\c{\zeta}) = \sigma\l[u_{(123)}^{(2)} \r](\c{z},\c{\zeta}),
\end{equation}

Since $\sigma[Q]$ and $\sigma\l[u_{(j)}\r]$ are non-vanishing and independent of $h_1$, the expression for the principal symbol from \eqref{expression for the principal symbol of three-fold linearization} gives \eqref{h_3 is known}.
Furthermore,  \eqref{expression for the lower order principal symbol of three-fold linearization} and \eqref{eqn: equality of the principal symbol of u_123}, along with $\mathcal{G}_1(\c{y},\c{\eta})\neq 0$, imply \eqref{eqn: uniquness of the square of h_2}.
\end{proof}

\subsection{Partial recovery of the potential via cubic nonlinearity}
Now we establish \eqref{eqn: Recovery of the potential via cubic nonliearities}, the partial uniqueness of the potential $h_1$ on the support of $h_3$. Since $h_3$ has been recovered on $\D$ in \eqref{h_3 is known}, we may write $h_3 := h^{(1)}_3 = h^{(2)}_{3}$.
%\begin{proposition}\label{prop: uniqueness of $V$ on the support of h_3}
%    Given the condition of Theorem \ref{thm : simplified theorem}, 
 %   $$
%        h_1^{(1)}(\c{y}) = h_1^{(2)}(\c{y}),\quad \text{for all }\c{y} \in \D \cap \supp(h_3).
 %   $$
%\end{proposition}

Since the potential term only shows up in lower order terms, the principal symbol information in Proposition \ref{prop: principal symbol of three wave interaction} is insufficient for inversion. We use the lower order symbol calculus   to recover the potential.

   We rewrite the RHS of \eqref{higher-order three-fold linearization} as 
$$
  f_{(123)} := -6h_3 u_{(1)}u_{(2)}u_{(3)}= -6 h_3 \prod_{j=1}^{3} \l( u_{(j)}^{\fre} + u_{(j)}^{\rem}\r), 
$$
where the free waves $u_{(j)}^{\fre}$ and the remainder waves $u_{(j)}^{\rem}$ are defined respectively in \eqref{eqn: equation of the free wave} and \eqref{eqn: definition of the remiander wave}. Decompose $f_{(123)}$ into a free source and a remainder source
\begin{equation}\label{eqn: the decomposition of f123fre and f123rem}
  f_{(123)} = f_{(123)}^{\fre} + f_{(123)}^{\rem}\quad\text{with}\quad
   f_{(123)}^{\fre} := -6 h_3 u_{(1)}^{\fre} u_{(2)}^{\fre}u_{(3)}^{\fre}.
\end{equation}
Let $\mathcal{V}_{(123)}^{\fre}$ be the free waves, solving 
\begin{equation}\label{eqn: free wave of the higher order part for three-fold linearization}
  \Box_g \mathcal{V}_{(123)}^{\fre} =  f_{(123)}^{\fre}.
\end{equation}
Then   the remainder wave  $
  \mathcal{V}_{(123)}^{\rem} := \mathcal{V}_{(123)} - \mathcal{V}_{(123)}^{\fre} 
 $
 solves 
\begin{equation}\label{eqn: remainder wave of the higher order part for three-fold linearization}
  \l(\Box_g+h_1 \r)\mathcal{V}_{(123)}^{\rem} = f_{(123)}^{\rem} - h_1\mathcal{V}_{(123)}^{\fre}.
\end{equation}
Adapting the strategy of the Minkowski case from \cite[Lemma 3.2]{Chen-Lu-Zhang-2025} for Lorentzian manifolds, we have

\begin{lemma}\label{lemma: microlocal structure of the higher order remiander wave of three fold-linearization}
Let $\chi$ be a microlocal cut-off near $(\c{z},\c{\zeta})$. Then there holds
\begin{equation}\label{eqn: order of the higher-order remiander wave}
  \chi\l(\mathcal{V}_{(123)}^{\rem} \r) \in I^{3\mu-\frac{1}{2}}\l(\Lambda_{(123)}^{g} \r),
\end{equation}
and for $r\to 0+$  
\begin{equation}\label{eqn: principal symbol higher-order remiander wave}
  \sigma\l[\mathcal{V}_{(123)}^{\rem} \r](\c{z},\c{\zeta})  = -\imath r^{-6(\mu+1)}\mathcal{A}_{(123)}h_3(\c{y})\l(\int_{0}^{s_{\out}}h_1\l(\gamma_{\out}(s)\r)ds + O(r^{2}) \r),
\end{equation}
where $\mathcal{A}_{(123)}$ is a nonvanishing factor independent of the coefficients to be recovered.
\end{lemma}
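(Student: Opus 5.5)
The plan is to split the right-hand side of \eqref{eqn: remainder wave of the higher order part for three-fold linearization} into its two contributions and treat each by the same principal symbol calculus used for Proposition \ref{prop: principal symbol of three wave interaction}. By linearity, write $\mathcal{V}_{(123)}^{\rem} = \mathcal{V}^{\rem,1} + \mathcal{V}^{\rem,2}$ with
\[
(\Box_g + h_1)\mathcal{V}^{\rem,1} = f_{(123)}^{\rem},
\qquad
(\Box_g + h_1)\mathcal{V}^{\rem,2} = -h_1\mathcal{V}_{(123)}^{\fre}.
\]

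\textbf{The source term $\mathcal{V}^{\rem,1}$.} Expand $f_{(123)}^{\rem}$ as a sum of triple products in which at least one factor is $u_{(j)}^{\rem}$. The terms with two or three remainder factors are two orders lower and can be discarded. For a term with exactly one remainder factor, the product is handled by Proposition \ref{prop: Greenleaf-Uhlmann}, applied twice for the transversal triple intersection $K_{(1)}\cap K_{(2)}\cap K_{(3)}$. Near $(\c{y},\c{\eta})$ this product lies in $I^{3\mu+1/2-1}(\Lambda_{(123)})$, one order below $f_{(123)}^{\fre}$. Its symbol at $(\c{y},\c{\eta})$ is the free product symbol times $-\imath (r^{-2}\kappa_{(j)})^{-1}\int_{\gamma_{(j)}}h_1$, by \eqref{relation between the principal symbol of free wave and remiander wave by homogeneity}.

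Propagating with $Q$ via Proposition \ref{prop: linear wave} then gives a contribution at $(\c{z},\c{\zeta})$ equal to the principal symbol of $\mathcal{V}_{(123)}$ times $O(r^{2})$, since $\kappa_{(j)}^{-1}r^{2}=O(r^2)$. The prefactor of $\mathcal{V}_{(123)}$ scales like $r^{-6(\mu+1)}$ by homogeneity of the symbols of $u_{(j)}$ at $r^{-2}\kappa_{(j)}\c{\xi}_{(j)}$. So this whole contribution is absorbed into the $O(r^2)$ error in \eqref{eqn: principal symbol higher-order remiander wave}.

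\textbf{The potential term $\mathcal{V}^{\rem,2}$.} This is the structurally important term. Microlocally near $\Gamma$ away from $\c{y}$, $\mathcal{V}_{(123)}^{\fre}\in I^{3\mu+1/2}(\Lambda^g_{(123)})$, and it solves $\Box_g \mathcal{V}^{\fre}=0$ there. The equation for $\mathcal{V}^{\rem,2}$ is therefore exactly the situation of \eqref{eqn: equation of the remiander wave}, with $u^{\fre}$ replaced by $\mathcal{V}_{(123)}^{\fre}$, except that the "source point" is now the interaction point $\c{y}$ and the Lagrangian is the flowout $\Lambda^g_{(123)}$.

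I would mimic \cite[Proposition 2.2]{Chen-Lu-Zhang-2025}: the transport equation along $\beta_{\out}$ for the order-$(3\mu-1/2)$ symbol reads
\[
\mathscr{L}_{H_{\Box_g}}\sigma\bigl[\mathcal{V}^{\rem,2}\bigr] = -\imath h_1\,\sigma\bigl[\mathcal{V}_{(123)}^{\fre}\bigr],
\]
with zero initial value at $\partial\Lambda^g_{(123)}$, because the source $h_1\mathcal{V}^{\fre}_{(123)}$ is one order better near $\Lambda_{(123)}$. Since $\sigma[\mathcal{V}^{\fre}]$ itself solves the homogeneous transport equation, integrating along $\beta_{\out}$ with the affine parameter gives
\[
\sigma\bigl[\mathcal{V}^{\rem,2}\bigr](\c{z},\c{\zeta}) = -\imath\,\sigma\bigl[\mathcal{V}_{(123)}^{\fre}\bigr](\c{z},\c{\zeta})\int_0^{s_{\out}}h_1(\gamma_{\out}(s))\,ds.
\]
Combining this with \eqref{expression for the principal symbol of three-fold linearization} yields $\mathcal{A}_{(123)}$, which collects $-6(2\pi)^{-2}\sigma[Q]$ and the free symbols and is independent of $h_1$ by Remark \ref{eqn: principal symbol calculus in different languages}. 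The microlocal cutoff $\chi$ removes $\Lambda_{(123)}$ and the $N^*K_{(j)}$, which gives \eqref{eqn: order of the higher-order remiander wave}.

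\textbf{Main obstacle.} The hard step is justifying the transport equation for the lower-order symbol on $\Lambda^g_{(123)}$ in curved geometry. Two points need care. First, the Keller--Maslov factors $\imath^{\sigma_{jk}}$ must be checked to cancel: both $\mathcal{V}^{\rem,2}$ and $\mathcal{V}^{\fre}$ live on the same Lagrangian, so the ratio of their symbols is a scalar function and the Maslov factors cancel. Second, the normalization of $\int h_1$ must use the affine parameter of $\gamma_{\out}$ matching $H_{\Box_g}$. I would verify this in local oscillatory-integral form along $\beta_{\out}$, which is possible because there are no cut points in $\D$.
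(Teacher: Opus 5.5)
Your proposal is correct and follows essentially the paper's route. The paper solves the single transport equation $\mathscr{L}_{H_{\Box_g}}\sigma[\mathcal{V}^{\rem}_{(123)}] = -\imath h_1\sigma[\mathcal{V}^{\fre}_{(123)}]$ with initial datum $\mathscr{R}(\sigma[\Box_g]^{-1}\sigma[f^{\rem}_{(123)}])$, patching Keller--Maslov trivializations chart by chart to reach $-\imath\sigma[\mathcal{V}^{\fre}_{(123)}](\c{z},\c{\zeta})\int_0^{s_{\out}}h_1 + \alpha_{\out}\sigma[\mathcal{V}^{\rem}_{(123)}](\c{y},\c{\eta})$. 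This is exactly your superposition $\mathcal{V}^{\rem,2}+\mathcal{V}^{\rem,1}$, and your remark that $\sigma[\mathcal{V}^{\rem,2}]/\sigma[\mathcal{V}^{\fre}_{(123)}]$ is a scalar is a cleaner way to see why the Maslov factors cancel.

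One bookkeeping slip: a triple product with exactly one remainder factor lies in $I^{3\mu+1}(\Lambda_{(123)})$, because $f^{\fre}_{(123)}\in I^{3\mu+2}(\Lambda_{(123)})$; it does not lie in $I^{3\mu-1/2}(\Lambda_{(123)})$. Your relative statement ``one order below $f^{\fre}_{(123)}$'' is the correct one, and it is what places that contribution at the same order $3\mu-\frac12$ on $\Lambda^g_{(123)}$.
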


\begin{proof}
We first compute the principal symbol of $\mathcal{V}_{(123)}^{\fre}$ defined by \eqref{eqn: free wave of the higher order part for three-fold linearization}, which satisfies the following transport equation
 \begin{equation}\label{eqn:transport equation of free wave}
 \begin{aligned} 
   \begin{cases}
     \mathscr{L}_{H_{\Box_g}}\sigma\l[\mathcal{V}_{(123)}^{\fre}\r]=0\quad &\text{on }\Lambda_{(123)}^{g}\b\partial\Lambda_{(123)}^{g},\\
     \sigma\l[\mathcal{V}_{(123)}^{\fre}\textbf{}\r]  = \mathscr{R}\l( \sigma\l[\Box_g\r]^{-1}\sigma\l[f_{(123)}^{\fre}\r]\r)\quad &\text{on }\partial\Lambda_{(123)}^{g}.
   \end{cases}
 \end{aligned}
\end{equation}
By Proposition \ref{prop: Greenleaf-Uhlmann}, 
\begin{equation}\label{eqn: the order of the principal symbol of f123fre}
\sigma\l[f_{(123)}^{\fre}\r]\in S^{3\mu+3}(\Lambda_{(123)},\Omega_{1/2}\otimes L).
\end{equation}
Thus, 
$$
p:=\sigma\l[\mathcal{V}_{(123)}^{\fre}\r] \in S^{3\mu+3/2}\l(\Lambda_{(123)}^{g},\Omega_{1/2}\otimes L\r).
$$

Denote by $\beta_{\out}$ the bicharacteristic from $(\c{y},\c{\eta})$ to $(\c{z},\c{\zeta})$   defined in \eqref{eqn: bicharacteristics}. Choose a finite open covering such that
$$
    \beta_{\out}([0,s_{\out}]) \subseteq \bigcup_{j=1}^{n}U_j,
$$
where $\{U_j \subseteq T^{\ast}M\b0\}$ are open conic neighbourhoods such that
\begin{itemize}
    \item $(\c{y},\c{\eta}) \in U_1$ and $(\c{z},\c{\zeta})\in U_n$;
    \item $U_j$ only intersects $U_{j-1}$ and $U_{j+1}$ for $2\le j \le n-1$;
    \item there exists $0 =: s_0 < s_1<\cdots < s_n:= s_{\out}$ such that
    $\beta_{\out}([s_{j-1},s_{j}])\subseteq U_j$;
    \item the half-density bundle $\Omega_{1/2}$ and the Keller--Maslov bundle $L$ in \eqref{eqn : principal symbol with HD KM bundle} are locally trivial over each $U_j$. 
\end{itemize}

Let  $\omega_j$ be the half-density over $\Lambda_{(123)}^{g}\cap U_j$. The Keller--Maslov bundle on $\Lambda_{(123)}^{g}\cap U_j$ can be trivialized by $\imath^{m_j}$ for some $m_j\in \mathbb{Z}$. Then, we have a local trivialization of $p$ as
$$
p_j = \imath^{m_j}
(a_j \omega_j)|_{U_j\cap\Lambda_{(123)}^{g}},$$
where $a_j$ is a classical symbol on $U_j \cap \Lambda_{(123)}^{g}$.
By \eqref{eqn: definition of sigma_{jk}}, $p_j$ satisfies
\begin{equation}\label{eqn: transition map of Maslov bundle}
p_{j+1} = \imath^{\sigma_{j+1,j}}p_{j}.
\end{equation}

Restricting the transport equation \eqref{eqn:transport equation of free wave} to $\beta_{\out}([s_{j-1},s_j])$ yields
\begin{equation}\label{eqn: transport in equation on a local coordinate}
  \imath^{-m_j}\omega^{-1}_j \mathscr{L}_{H_{\Box_g}}p_j = H_{\Box_g}a_j + a_j \omega^{-1}_j\mathscr{L}_{H_{\Box_g}}\omega_j.
\end{equation}
Define the factor $\rho_j(s) = \int_{s_{j-1}}^{s}\omega_{j}^{-1}\mathscr{L}_{H_{\Box_g}}\omega_j(\beta_{\out}(s)) ds$ for $s\in (s_{j-1},s_j)$. Thus, \eqref{eqn: transport in equation on a local coordinate} reduces to
\begin{equation}\label{eqn: invariant}
 \partial_{s}\l( e^{\rho_j (s)}a_{j}(\beta_{\out}(s))\r) = 0,\quad s\in (s_{j-1},s_{j})
\end{equation}
We solve the transport equation with initial value $a_{j}(\beta_{\out}(s_{j-1}))$ as
$$
 \imath ^{-m_j}e^{\rho_j (s_j)}\l( \omega^{-1}_jp _j\r)(\beta_{\out}(s_j)) = \imath^{-m_j} \l( \omega^{-1}_jp_{j}\r)(\beta_{\out}(s_{j-1})).
$$
Plugging in \eqref{eqn: transition map of Maslov bundle}, we obtain
\begin{equation}\label{eqn: lagrangian submanifold 1}
p_{j+1}(\beta_{\out}(s_j)) = \imath^{\sigma_{j+1,j}}p_{j}(\beta_{\out}(s_j))= C_j\imath^{\sigma_{j+1,j}} p_{j}(\beta_{\out}(s_{j-1})),
\end{equation}
where $C_j$ is a nonvanishing factor for $j=1,2,\dots,n$ such that
\begin{equation}\label{eqn: lagrangian submanifold 2}
 C_j  = e^{-\rho_j(s_j)}\omega_{j}(\beta_{\out}(s_j))\omega_{j}^{-1}(\beta_{\out}(s_{j-1})),
\end{equation}
and $\imath^{\sigma_{j-1,j}}$ is the transition function of $L$ on $U_{j-1}\cap U_{j}$.

Iterating \eqref{eqn: lagrangian submanifold 1} and denoting 
\begin{equation}\label{eqn: patching lagrangian submanifold 1}
\alpha_{\out}(\c{z},\c{\zeta}) = \l(\imath^{\sum_{j=1}^{n-1}\sigma_{j+1,j}}\r) C_1\cdots C_n,
\end{equation}
 we have 
\begin{equation}\label{eqn: patching lagrangian submanifold 2}
\sigma\l[\mathcal{V}_{(123)}^{\fre}\r](\c{z},\c{\zeta}) = \alpha_{\out}(\c{z},\c{\zeta})\sigma\l[\mathcal{V}_{(123)}^{\fre}\r](\c{y},\c{\eta}).
\end{equation}

Next, we compute the principal symbol of $\mathcal{V}_{(123)}^{\rem}$ in \eqref{eqn: remainder wave of the higher order part for three-fold linearization}, by solving 
\begin{equation}\label{eqn: transport equation of the lower order symbol}
    \begin{aligned}
        \begin{cases}
            \mathscr{L}_{H_{\Box_g}}\sigma\l[\mathcal{V}_{(123)}^{\rem}\r] = -\imath h_1\sigma\l[\mathcal{V}_{(123)}^{\fre}\r],\quad \text{on } \Lambda_{(123)}^{g}\b\partial\Lambda_{(123)}^{g},\\
            \sigma\l[\mathcal{V}_{(123)}^{\rem}\r] = \mathscr{R}\l( \sigma[\Box_g]^{-1} \sigma\l[f_{(123)}^{\rem}\r]\r), \quad \text{on }\partial\Lambda_{(123)}^{g}.
        \end{cases}
    \end{aligned}
\end{equation}
Recall \eqref{eqn: the decomposition of f123fre and f123rem} and \eqref{eqn: the order of the principal symbol of f123fre}. We have 
$$
\sigma\l[f_{(123)}^{\rem}\r] \in S^{3\mu+2}(\Lambda_{(123)},\Omega_{1/2}\otimes L).
$$
Applying this to \eqref{eqn: transport equation of the lower order symbol} gives 
$$
 q := \sigma\l[\mathcal{V}_{(123)}^{\rem}\r] \in S^{3\mu+1/2}\l(\Lambda_{(123)}^{g},\Omega_{1/2}\otimes L\r).
$$
The local expression of $q$ on $\Lambda_{(123)}^{g}\cap U_j$ reads 
$$
q_j = \imath^{m_j}
(b_j \omega_j)|_{U_j \cap\Lambda_{(123)}^{g}},
$$
with a classical symbol $b_j$ on $U_j \cap\Lambda_{(123)}^{g}$, and $q_{j}$ satisfies
\begin{equation}\label{eqn: transition map for remainder symbol}
q_{j+1}(\beta_{\out}(s_j)) = \imath^{\sigma_{j+1,j}}q_{j}(\beta_{\out}(s_j)). 
\end{equation}
Then the transport equation \eqref{eqn: transport equation of the lower order symbol} along $\beta_{\out}([s_{j-1},s_{j}])$ reduces to 
$$
 \partial_{s}\l( e^{\rho_j(s)}b_j(\beta_{\out}(s))\r) = -\imath h_1(\gamma_{\out}(s)) e^{\rho_{j}(s)} a_{j}(\beta_{\out}(s)),\quad s\in (s_{j-1},s_{j}).
$$
 Since $e^{\rho_{j}(s)} a_{j}(\beta_{\out}(s))$ is constant for $s\in(s_{j-1},s_{j})$ due to \eqref{eqn: invariant}, we solve the equation with initial value $b_j(\beta_{\out}(s_{j-1}))$ as 
\begin{multline}\label{eqn: Lagrangian submanifold lower order symbol}
 \imath^{-m_j}e^{\rho_{j}(s_j)} \l(\omega^{-1}_jq_j\r)(\beta_{\out}(s_j)) \\
 = -\imath^{-m_j + 1} \l( \omega^{-1}_jp_j\r)(\beta_{\out}(s_{j-1})) \int_{s_{j-1}}^{s_j}h_1(\gamma_{\out}(s))ds 
 + \imath^{-m_j} \l(\omega^{-1}_j q_j\r)(\beta_{\out}(s_{j-1})).
\end{multline}
Combining \eqref{eqn: lagrangian submanifold 1}, \eqref{eqn: lagrangian submanifold 2} and \eqref{eqn: Lagrangian submanifold lower order symbol} leads to that 
$$
q_j (\beta_{\out}(s_j)) 
=  -\imath p_j (\beta_{\out}(s_j)) \int_{s_{j-1}}^{s_j}h_1(\gamma_{\out}(s))ds+ C_{j} q_j(\beta_{\out}(s_{j-1})) ,
$$
By \eqref{eqn: lagrangian submanifold 1} and \eqref{eqn: transition map for remainder symbol}, for $1\le j\le n-1$, we have 
\begin{align*}
    \lefteqn{q_{j+1}(\beta_{\out}(s_{j+1}))} \\ =& -\imath p_{j+1}(\beta_{\out}(s_{j+1}))\int_{s_j}^{s_{j+1}}h_1(\gamma_{\out}(s))ds + C_{j+1} q_{j+1}(\beta_{\out}(s_j))\\
    =&-\imath p_{j+1}(\beta_{\out}(s_{j+1}))\int_{s_j}^{s_{j+1}}h_1(\gamma_{\out}(s))ds + C_{j+1} \imath^{\sigma_{j+1,j}} q_{j}(\beta_{\out}(s_j))\\
    =&-\imath p_{j+1}(\beta_{\out}(s_{j+1}))\int_{s_j}^{s_{j+1}}h_1(\gamma_{\out}(s))ds \\
    &-\imath C_{j+1}\imath^{\sigma_{j+1,j}} p_{j}(\beta_{\out}(s_j))\int_{s_{j-1}}^{s_{j}}h_1(\gamma_{\out}(s))ds+ C_jC_{j+1}\imath^{\sigma_{j+1,j}}q_{j}(\beta_{\out}(s_{j-1}))\\
    =& -\imath p_{j+1}(\beta_{\out}(s_{j+1}))\int_{s_{j-1}}^{s_{j+1}} h_1(\gamma_{\out}(s))ds + C_{j} C_{j+1} \imath^{\sigma_{j+1,j}}q_{j}(\beta_{\out}(s_{j-1})).
\end{align*}
Iteratively, by \eqref{eqn: patching lagrangian submanifold 1}, we have  
\begin{multline*}
q_{n}(\beta_{\out}(s_n))=\sigma\l[\mathcal{V}_{(123)}^{\rem}\r](\c{z},\c{\zeta}) \\
=  -\imath \sigma\l[\mathcal{V}_{(123)}^{\fre}\r](\c{z},\c{\zeta}) \int_{0}^{s_{\out}}h_1(\gamma_{\out}(s))ds+ \alpha_{\out}(\c{z},\c{\zeta})\sigma\l[\mathcal{V}_{(123)}^{\rem}\r](\c{y},\c{\eta}).
\end{multline*}
We get \eqref{eqn: principal symbol higher-order remiander wave} by plugging in the expression of $\sigma\l[\mathcal{V}_{(123)}^{\fre}\r](\c{z},\c{\zeta})$ and $\sigma\l[\mathcal{V}_{(123)}^{\rem}\r](\c{y},\c{\eta})$ from \cite[(32),(42)]{Chen-Lu-Zhang-2025}.
\end{proof}

This lower order symbol calculus enables us to reconstruct the potential on $\supp h_3$.

\begin{proof}[Proof of  \eqref{eqn: Recovery of the potential via cubic nonliearities}]

Let $u_{(123)}^{(j)}$ for $j=1, 2$ be in \eqref{eqn: three-fold linearization with twice observation} and
$$
    u_{(123)}^{(j)} := \mathcal{V}_{(123)}^{(j)} + \mathcal{W}_{(123)}^{(j)},\quad j=1,2
$$ as in \eqref{decomposition of three-fold linearization}.

Note that $h_3$ has been recovered in \eqref{h_3 is known} and $u_{(j)}^{\fre}$ in \eqref{eqn: equation of the free wave} is independent of every $h_k$ for $k \in \mathbb{Z}_+$. For the free part of $\mathcal{V}_{(123)}^{\fre}$ defined by \eqref{eqn: free wave of the higher order part for three-fold linearization}, we thus have $\mathcal{V}_{(123)}^{(1), \fre} = \mathcal{V}_{(123)}^{(2), \fre} $ indeed. Consequently, $u_{(123)}^{(1)} = u_{(123)}^{(2)}$ guarantees that 
\begin{align}\label{eqn : identity of u_{(123)}^{rem}}
\mathcal{V}_{(123)}^{(1),\rem}  + \mathcal{W}_{(123)}^{(1)}  = \mathcal{V}_{(123)}^{(2),\rem} + \mathcal{W}_{(123)}^{(2)}. 
\end{align}

 Due to \eqref{order of the two waves genereated by three-fold linearization} and \eqref{eqn: order of the higher-order remiander wave}, $\mathcal{W}_{(123)}^{(j)}$ is one order smoother than $\mathcal{V}_{(123)}^{(j),\rem}$. Taking the leading terms in \eqref{eqn : identity of u_{(123)}^{rem}} gives
$$
  \sigma\l[\mathcal{V}_{(123)}^{(1),\rem} \r](\c{z},\c{\zeta}) =  \sigma\l[\mathcal{V}_{(123)}^{(2),\rem} \r](\c{z},\c{\zeta}).
$$
In view of \eqref{eqn: principal symbol higher-order remiander wave}, we have
$$
   h_{3}(\c{y})\int_{0}^{s_{\out}}h_1^{(1)}\l(\gamma_{\out}(s)\r)ds =    h_{3}(\c{y})\int_{0}^{s_{\out}}h_1^{(2)}\l(\gamma_{\out}(s)\r)ds.
$$
Dividing both sides by $h_3(\c{y}) \neq 0$, we deduce
$$
\int_{0}^{s_{\out}}h_1^{(1)}\l(\gamma_{\out}(s)\r)ds = \int_{0}^{s_{\out}}h_1^{(2)}\l(\gamma_{\out}(s)\r)ds,\quad \text{if } h_{3}(\c{y}) \neq 0.
$$
Differentiating both sides at $\c{y} = \gamma_{\out}(0)$, we obtain
$$
   h_1^{(1)}(\c{y}) = h_1^{(2)}(\c{y}),\quad \text{if } h_{3}(\c{y})\neq 0.
$$
Finally, we extend  this, by continuity, to $\supp(h_3)$, completing the proof.
\end{proof}

\subsection{Partial recovery of the potential via quadratic nonlinearity}\label{sec: Recovery of the potential via quadratic nonliearities}

Next, we prove \eqref{eqn: Recovery of the potential via quadratic nonliearities}, the partial uniqueness of $h_1$ on $\supp(h_2)\b\supp(h_3)$.% by performing the $m$-th order linearization on \eqref{eqn : semilinear wave with power series}.

To extract the useful information for inversion, we choose a microlocal cut-off $\chi$ near $(\c{y},\c{\eta})$ such that 
\begin{equation}\label{eqn: microlocal cut-off}
 \begin{aligned}
  \begin{cases}
 \WF\l(\chi \r) \cap \l(\bigcup_{j=1}^{3} N^{\ast}K_{(j)}\r)  = \emptyset,\\
 \WF(\chi) \cap \l(\bigcup_{1\le k<l\le 3} N^{\ast} \l( K_{(k)}\cap K_{(l)} \r)\r)  = \emptyset.
 \end{cases}
 \end{aligned}
\end{equation}
In contrast with the decomposition in \eqref{decomposition of three-fold linearization}, we re-split $u_{(123)}$ in \eqref{three-fold linearization} into 
\begin{equation}\label{eqn: splitting u_123 into singular and smooth terms}
u_{(123)} := v_{(123)} + w_{(123)},
\end{equation}
where $v_{(123)}$ and $w_{(123)}$  respectively solve 
\begin{align}
 \label{singular term} \l(\Box_g+h_1\r) v_{(123)} &=  \sum_{\{i,j,k\} =  \{1,2,3\}} -\chi\l( h_2 u_{(ij)}u_{(k)} \r),\\
 \label{smooth term} \l(\Box_g+h_1\r) w_{(123)} &=  \sum_{\{i,j,k\} = \{1,2,3\}} -(\id-\chi)\l( h_2 u_{(ij)}u_{(k)} \r) -6 h_3u_{(1)}u_{(2)}u_{(3)}.
\end{align}
Recall that the wavefront sets of the inhomogeneous terms in \eqref{singular term} and \eqref{smooth term} are contained in \eqref{eqn: wavefront set of three-fold linearization}. The wavefront set properties \eqref{eqn: first wavefront set property} and \eqref{eqn: second wavefront set property} and the fact that $h_3 \equiv 0 $ in a neighbourhood of $\c{y}$ show that the inhomogeneous terms in the equation \eqref{smooth term} have the property that
$$
\Gamma  \cap \WF\l((\id-\chi)\l(h_2 u_{(ij)}u_{(k)}\r)\r) = \emptyset,\quad \text{and}\quad \Gamma \cap \WF\l(h_3 u_{
(1)}u_{(2)}u_{(3)}\r) = \emptyset,
$$
where $\Gamma$ is the image of the bicharacteristic from $(\c{y},\c{\eta})$ to $(\c{z},\c{\zeta})$ defined in \eqref{eqn: image of the bicharacteristics}. By the propagation of singularities and the global hyperbolicity of $(M,g)$, we have
\begin{equation}\label{eqn: regularity of w_123}
\Gamma \cap \WF\l(w_{(123)}\r) = \emptyset.
\end{equation}

Since $u_{(123)}  = \mathcal{V}_{(123)} + \mathcal{W}_{(123)}$ by \eqref{decomposition of three-fold linearization} and $h_3 = 0$ in a neighborhood of $\c{y}$, Proposition \ref{prop: principal symbol of three wave interaction} gives that
$$
 u_{(123)}\in I^{3\mu-3/2}\l( \Lambda_{(123)},\Lambda_{(123)}^{g}\r)\quad \text{and}\quad \sigma\l[u_{(123)}\r](\beta_{\out}(s)) = \sigma\l[\mathcal{W}_{(123)}\r](\beta_{\out}(s)),
$$
where $s\in [0,s_{\out}]$ and $\beta_{\mathrm{out}}(s)$ is the bicharacteristic from $(\c{y},\c{\eta})$ to $(\c{z},\c{\zeta})$ in \eqref{eqn: bicharacteristics}.

Combining the decomposition $u_{(123)} = v_{(123)}+w_{(123)}$ in \eqref{eqn: splitting u_123 into singular and smooth terms} with the regularity of $w_{(123)}$ in \eqref{eqn: regularity of w_123}, it follows that
\begin{equation}\label{eqn: microlocal property of v_123}
  v_{(123)} \in I^{3\mu-\frac{3}{2}}\l( \Lambda_{(123)},\Lambda_{(123)}^{g}\r),\quad\sigma\l[v_{(123)}\r](\beta_{\mathrm{out}}(s)) = \sigma\l[\mathcal{W}_{(123)}\r](\beta_{\mathrm{out}}(s)),
\end{equation}

Next, we analyse the lower order symbol of $v_{(123)}$ to recover $h_1$.

We begin by analyzing the lower order symbols of $u_{(ij)}$ with $1\le i\neq j\le 3$. As before, the free part of $u_{(ij)}$ is defined to be the solution  $u_{(ij)}^{\fre}$ to the potential-free wave equation  
\begin{equation}\label{eqn: free wave of two-fold linearization}
  \Box_gu_{(ij)}^{\fre} = -2h_2 u_{(i)}^{\fre}u_{(j)}^{\fre}. 
\end{equation}

We  denote the remainder in $u_{(ij)}$ by
$$
   u_{(ij)}^{\rem} := u_{(ij)} -  u_{(ij)}^{\fre}. 
$$
Recall that $u_{(ij)}$ and $u_{(ij)}^{\fre}$ solve the equations \eqref{two-fold linearization} and \eqref{eqn: free wave of two-fold linearization} respectively. Moreover, $u_{(i)}$ and $u_{(j)}$ can be decomposed into free and remainder parts as in \eqref{eqn: equation of the free wave} and \eqref{eqn: definition of the remiander wave}. Applying $\Box_g+h_1$ to both sides above, we compute
\begin{multline}\label{eqn: the equation of u_ij^rem}
    \l(\Box_g+h_1\r)u_{(ij)}^{\rem}  = \l(\Box_g+h_1\r)u_{(ij)} -\Box_g u_{(ij)}^{\fre} - h_1u_{(ij)}^{\fre}\\
    = -2h_{2}\l(u_{(i)}^{\fre}u_{(j)}^{\rem} + u_{(i)}^{\rem}u_{(j)}^{\fre} + u_{(i)}^{\rem}u_{(j)}^{\rem}  \r) - h_1u_{(ij)}^{\fre}:= F_{(ij)}^{\rem}.
\end{multline}
%\subsection{lower order symbol calculus of two-fold linearization}

To analyse the lower order symbol of $v_{(123)}$, we first define the free wave $v_{(123)}^{\fre}$ as the solution to 
\begin{equation}\label{eqn: equation for v_123^fre}
\Box_g v_{(123)}^{\fre} = \sum_{\{i,j,k\} = \{1,2,3\}} -\chi\l( h_2 u_{(ij)}^{\fre}u_{(k)}^{\fre} \r).
\end{equation}
Define the remainder wave by
$$
  v_{(123)}^{\rem} := v_{(123)} - v_{(123)}^{\fre}.
$$
Applying \(\Box_g + h_1\) to both sides, and invoking  \eqref{singular term} and \eqref{eqn: equation for v_123^fre}, we arrive at
\begin{equation}\label{eqn: remainder term of the wave produced by h_2}
\l(\Box_g+h_1 \r)v_{(123)}^{\rem}  =   \l(\Box_g+h_1 \r)v_{(123)} - \Box_g v_{(123)}^{\fre} - h_1v_{(123)}^{\fre} = F_{(123)} - h_1v_{(123)}^{\fre},
\end{equation}
where
$$
F_{(123)} :=\sum_{\{i,j,k\} = \{1,2,3\}} -\chi h_2\l( u_{(ij)}^{\rem}u_{(k)}^{\fre}  + u_{(ij)}^{\fre}u_{(k)}^{\rem} + u_{(ij)}^{\rem}u_{(k)}^{\rem}\r).
$$

\begin{proposition}\label{prop: principal symbol of v_123^rem}
    For $v_{(123)}^{\rem}$ solving the equation \eqref{eqn: remainder term of the wave produced by h_2}, we have
    the principal symbol
\begin{align}\label{principal symbol of v_123^rem}
\lefteqn{\sigma\l[v_{(123)}^{\rem}\r](\c{z},\c{\zeta})}  \\
\notag &= -\imath\sigma\l[v_{(123)}\r](\c{z},\c{\zeta})\l( \int_{0}^{s_{\out}}h_1(\gamma_{\out}(s))ds +\sum_{j=1}^{3}r^{2}\kappa_{(j)}^{-1}\int_{\gamma_{(j)}}h_1 \r).
\end{align}

\end{proposition}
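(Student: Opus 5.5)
The plan follows the proof of Lemma \ref{lemma: microlocal structure of the higher order remiander wave of three fold-linearization}. It has two parts: a transport argument along $\beta_{\out}$ from $(\c{y},\c{\eta})$ to $(\c{z},\c{\zeta})$, and a computation of the initial value $\sigma[v_{(123)}^{\rem}](\c{y},\c{\eta})$ from the source $F_{(123)}$.

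\emph{Transport along $\beta_{\out}$.} Equation \eqref{eqn: remainder term of the wave produced by h_2} has the same structure as \eqref{eqn: remainder wave of the higher order part for three-fold linearization}. Hence $\sigma[v_{(123)}^{\rem}]$ satisfies $\mathscr{L}_{H_{\Box_g}}\sigma[v_{(123)}^{\rem}] = -\imath h_1\sigma[v_{(123)}^{\fre}]$ on $\Lambda^{g}_{(123)}\b\partial\Lambda^{g}_{(123)}$. The initial value on $\partial\Lambda^g_{(123)}$ is $\mathscr{R}(\sigma[\Box_g]^{-1}\sigma[F_{(123)}])$. I will reuse the covering $U_1,\dots,U_n$, the Maslov transition factors $\imath^{\sigma_{j+1,j}}$ and the iteration of that proof verbatim. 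This gives
\[
\sigma[v_{(123)}^{\rem}](\c{z},\c{\zeta}) = -\imath\sigma[v_{(123)}^{\fre}](\c{z},\c{\zeta})\int_0^{s_{\out}}h_1(\gamma_{\out}(s))ds + \alpha_{\out}(\c{z},\c{\zeta})\,\sigma[v_{(123)}^{\rem}](\c{y},\c{\eta}).
\]
The free wave $v^{\fre}_{(123)}$ has the same principal symbol as $v_{(123)}$, and the difference is lower order. This is the same argument as \eqref{eqn: microlocal property of free linear waves}, with \eqref{eqn: microlocal property of v_123} supplying the principal symbol of $v_{(123)}$. So I may replace $\sigma[v^{\fre}_{(123)}]$ by $\sigma[v_{(123)}]$ in the first term. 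By \eqref{eqn: patching lagrangian submanifold 2}, applied to $v_{(123)}$, $\alpha_{\out}$ converts symbols at $(\c{y},\c{\eta})$ into symbols at $(\c{z},\c{\zeta})$. It therefore suffices to show
\[
\sigma[v_{(123)}^{\rem}](\c{y},\c{\eta}) = -\imath\,\sigma[v_{(123)}](\c{y},\c{\eta})\sum_{j=1}^3 r^{2}\kappa_{(j)}^{-1}\int_{\gamma_{(j)}}h_1 .
\]

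\emph{Initial value at $\c{y}$.} I compute $\sigma[F_{(123)}]$ at $(\c{y},\c{\eta})$ term by term. I use Proposition \ref{prop: Greenleaf-Uhlmann} twice: first for the product $u_{(i)}u_{(j)}$ generating $u_{(ij)}$, then for $u_{(ij)}u_{(k)}$. The covector is split as $\c{\eta} = \c{\eta}_{(1)}+\c{\eta}_{(2)}+\c{\eta}_{(3)}$, following \eqref{eqn: notation of the linear relation}.
\begin{itemize}
\item The terms $u^{\fre}_{(ij)}u^{\rem}_{(k)}$ carry the factor $-\imath(r^{-2}\kappa_{(k)})^{-1}\int_{\gamma_{(k)}}h_1$, by \eqref{relation between the principal symbol of free wave and remiander wave by homogeneity}.
\item The terms $u^{\rem}_{(ij)}u^{\fre}_{(k)}$ require $\sigma[u^{\rem}_{(ij)}]$ near $\c{y}$. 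I take it from \eqref{eqn: the equation of u_ij^rem}. The source $-2h_2(u^{\fre}_{(i)}u^{\rem}_{(j)}+u^{\rem}_{(i)}u^{\fre}_{(j)})$ contributes the factors $-\imath(r^{-2}\kappa_{(j)})^{-1}\int_{\gamma_{(j)}}h_1$ and $-\imath(r^{-2}\kappa_{(i)})^{-1}\int_{\gamma_{(i)}}h_1$, again by homogeneity.
\end{itemize}

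\emph{Remaining terms and the main obstacle.} The term $-h_1u^{\fre}_{(ij)}$ in \eqref{eqn: the equation of u_ij^rem}, the double-remainder products, and the transport of $u^{\rem}_{(ij)}$ along $N^\ast(K_{(i)}\cap K_{(j)})$ up to $\c{y}$ all have to be checked. I must show they contribute only at lower order, or at order $O(r^{2})$ relative to the retained terms. This is the step I expect to be the main obstacle. $N^\ast(K_{(i)}\cap K_{(j)})$ is not characteristic, so $u_{(ij)}$ is the elliptic solution $\sigma[\Box_g]^{-1}\sigma[\text{source}]$ microlocally near the relevant covectors. As a consequence, $-h_1u^{\fre}_{(ij)}$ enters only at one order lower, with no integral along a ray. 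The double-remainder terms are two orders lower. So only the single-remainder factors survive. Summing over $\{i,j,k\}$, every $\int_{\gamma_{(l)}}h_1$ appears multiplied by exactly the full symbol $\sigma[\mathcal{W}_{(123)}]$ structure of \eqref{expression for the lower order principal symbol of three-fold linearization}. Factoring this out gives $\sigma[F_{(123)}]=-\imath\,\sigma[\text{source of }v_{(123)}]\sum_l r^2\kappa_{(l)}^{-1}\int_{\gamma_{(l)}}h_1$. Applying $\mathscr{R}\circ\sigma[\Box_g]^{-1}$ yields the claimed initial value, and this completes \eqref{principal symbol of v_123^rem}.
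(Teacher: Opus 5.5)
Your proposal is correct and takes essentially the same route as the paper. The paper also solves the subprincipal transport equation along $\beta_{\out}$ by reusing the covering and Maslov-factor iteration from the proof of Lemma~\ref{lemma: microlocal structure of the higher order remiander wave of three fold-linearization}. It likewise gets the initial value by computing $\sigma[F_{(123)}](\c{y},\c{\eta})$ with Proposition~\ref{prop: Greenleaf-Uhlmann} and the homogeneity relation, keeping only the single-remainder products, so that $-\imath\sum_{j}r^{2}\kappa_{(j)}^{-1}\int_{\gamma_{(j)}}h_1$ factors out of the $h_2^2\mathcal{G}_1$-weighted triple product; the only cosmetic difference is that the paper writes the transport source directly as $-\imath h_1\sigma[v_{(123)}]$ and passes through $\sigma[Q]$ (Remark~\ref{eqn: principal symbol calculus in different languages}) rather than your $\alpha_{\out}$-rescaling, which amounts to the same thing.
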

Before proving Proposition \ref{prop: principal symbol of v_123^rem} for the principal symbol of $v_{(123)}^{\rem}$, we first derive the principal symbols of $u_{(ij)}^{\fre}$ and $u_{(ij)}^{\rem}$.

%\\
 %&= -\imath 2(2\pi)^{-2}h_{2}^{2}(\c{y})\mathcal{G}_{1}(\c{\eta}) \sigma\l[Q\r]\l(\c{z},\c{\zeta},\c{y},\c{\eta} \r)\prod_{j=1}^{3}\sigma\l[u_{(j)}\r]\l(\c{y},r^{-2}\kappa_{(j)}\c{\xi}_{(j)} \r)\\
 % &\quad \times \l( \int_{0}^{s_{\out}}h_1(\gamma_{\out}(s))ds + O(r^{2})\r),\quad\text{as }r\to 0+.

 Recall that $(\c{y},\c{\eta}_{(k)})$ lies in the wavefront set of $u_{(k)}$, where $\c{\eta}_{(k)}$ is defined in \eqref{eqn: notation of the linear relation}. By \cite[Lemma 2.7, p.141]{O}, the sum of two linearly independent light-like covectors is not light-like. Hence, 
$$
\l(\c{y},\c{\eta}_{(i)} +\c{\eta}_{(j)}\r) \notin \Char(\Box_g),\quad\text{for all } 1\le i \neq j\le3.
$$ Choose $\chi_{(ij)}$ as a microlocal cut-off near $(\c{y},\c{\eta}_{(i)}+\c{\eta}_{(j)})$ such that
\begin{equation}\label{eqn: property of the microlocal cut-off near y,eta_1+eta_2}
\WF\l( \chi_{(ij)}\r) \cap \l(N^{\ast}K_{(i)}\cup N^{\ast}K_{(j)} \r) = \emptyset,\quad\text{and}\quad\WF(\chi_{(ij)}) \cap \Char(\Box_g) = \emptyset.
\end{equation}

\begin{lemma}
Let $\chi_{(ij)}$ be the microlocal cut-off introduced in \eqref{eqn: property of the microlocal cut-off near y,eta_1+eta_2}. We have
\begin{equation}\label{eqn: order of free wave of two-fold linearization}
\chi_{(ij)} u_{(ij)}^{\fre} \in I^{2\mu-1}\l(N^{\ast}\l(K_{(i)}\cap K_{(j)} \r) \r)
\end{equation}
with principal symbol 
\begin{align}\label{eqn: principal symbol of two-fold linearization}
  \lefteqn{\sigma\l[ \chi_{(ij)} u^{\fre}_{(ij)}\r]\l(\c{y},\c{\eta}_{(i)}+\c{\eta}_{(j)} \r)}  \\ \notag  =& -\pi^{-1}h_{2}(\c{y})\l(\sigma\l[\Box_g\r]\l(\c{y},\c{\eta}_{(i)}+\c{\eta}_{(j)} \r)\r)^{-1} 
  \times\sigma\l[u_{(i)}\r]\l( \c{y},\c{\eta}_{(i)}\r) \sigma\l[u_{(j)}\r](\c{y},\c{\eta}_{(j)}).
\end{align}
\end{lemma}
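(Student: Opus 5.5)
The plan is to treat $u_{(ij)}^{\fre}$ as the solution of a potential-free wave equation whose source is a product of two transversal conormal waves. We then invert $\Box_g$ microlocally in the region where it is elliptic. The argument has four steps.

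\textbf{Step 1: the source.} Near $\c{y}$, the free waves $u_{(i)}^{\fre}$ and $u_{(j)}^{\fre}$ are conormal distributions. By \eqref{eqn: microlocal property of free linear waves}, they lie in $I^{\mu}(N^{\ast}K_{(i)})$ and $I^{\mu}(N^{\ast}K_{(j)})$ away from their source points, which lie in $\rotom$ and are far from $\c{y}$. Their principal symbols agree with those of $u_{(i)}$ and $u_{(j)}$. The hypersurfaces $K_{(i)}$ and $K_{(j)}$ intersect transversally near $\c{y}$, since $\c{\xi}_{(i)}$ and $\c{\xi}_{(j)}$ are linearly independent. Proposition \ref{prop: Greenleaf-Uhlmann} therefore applies: microlocally away from $N^{\ast}K_{(i)}\cup N^{\ast}K_{(j)}$, in particular on $\WF(\chi_{(ij)})$, we get $u_{(i)}^{\fre}u_{(j)}^{\fre}\in I^{2\mu+1}(N^{\ast}(K_{(i)}\cap K_{(j)}))$. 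Its principal symbol at $(\c{y},\c{\eta}_{(i)}+\c{\eta}_{(j)})$ is the product $\sigma[u_{(i)}](\c{y},\c{\eta}_{(i)})\,\sigma[u_{(j)}](\c{y},\c{\eta}_{(j)})$, up to a normalisation constant. Multiplying by the smooth function $-2h_2$ keeps this class and multiplies the symbol by $-2h_2(\c{y})$.

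\textbf{Step 2: elliptic inversion.} By \eqref{eqn: property of the microlocal cut-off near y,eta_1+eta_2}, $\WF(\chi_{(ij)})$ avoids $\Char(\Box_g)$. Hence $\Box_g$ is elliptic there, of order $2$, and admits a microlocal parametrix $P$ with $\sigma[P]=\sigma[\Box_g]^{-1}$. Write
\[
\chi_{(ij)}u^{\fre}_{(ij)}=\chi_{(ij)}P\Box_g u^{\fre}_{(ij)}+\text{smooth}=\chi_{(ij)}P\big(-2h_2u_{(i)}^{\fre}u_{(j)}^{\fre}\big)+\text{smooth}.
\]
This uses pseudolocality together with the fact that $\WF(u^{\fre}_{(ij)})$ near $\c{y}$ lies in the set \eqref{eqn: wavefront set of three-fold linearization}, restricted to the two indices $i,j$, plus the flowout of $N^*(K_{(i)}\cap K_{(j)})$. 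On $\WF(\chi_{(ij)})$ only the conormal piece $N^{\ast}(K_{(i)}\cap K_{(j)})$ is met; the flowout issues from characteristic points, which the cutoff excludes. Applying a pseudodifferential operator of order $-2$ to an element of $I^{2\mu+1}$ gives \eqref{eqn: order of free wave of two-fold linearization}, namely $I^{2\mu-1}(N^{\ast}(K_{(i)}\cap K_{(j)}))$. The principal symbol is multiplied by $\sigma[\Box_g](\c{y},\c{\eta}_{(i)}+\c{\eta}_{(j)})^{-1}$. Since $\chi_{(ij)}$ can be taken with principal symbol $1$ near the point, it does not affect the symbol there.

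\textbf{Step 3: the constant.} It remains to track the normalisation constants in Proposition \ref{prop: Greenleaf-Uhlmann} under the paper's conventions. The conormal symbol carries a factor $(2\pi)^{n/4-(n-r)/2}$, with $n=4$. For $K_{(i)}$, $K_{(j)}$ we have codimension $1$, giving $(2\pi)^{1/2}$ each; for the intersection, codimension $2$, giving $(2\pi)^{0}$. The product of the amplitudes $a_{(i)}a_{(j)}$ is the amplitude of the product. Converting the symbols of the factors to amplitudes introduces $(2\pi)^{-1/2}\cdot(2\pi)^{-1/2}=(2\pi)^{-1}$, and converting back for the intersection introduces $(2\pi)^{0}$. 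Together with the factor $-2h_2$ this yields $-2(2\pi)^{-1}h_2=-\pi^{-1}h_2$, matching \eqref{eqn: principal symbol of two-fold linearization}.

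I expect the main care to lie in this bookkeeping of $(2\pi)$ factors and in checking that the half-density factors match consistently. Equivalently, one can cite \cite[Lemma 3.3]{LUW} or \cite[Proposition 3.7]{LUW} for the same computation. The microlocal ellipticity step is routine.
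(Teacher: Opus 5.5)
Your proposal is correct and follows the same route as the paper. The paper's proof only cites Proposition \ref{prop: Greenleaf-Uhlmann} for the product $-2h_2u_{(i)}^{\fre}u_{(j)}^{\fre}$ and \cite[Lemma 3.4]{LUW} for the microlocally elliptic inversion of $\Box_g$ off $\Char(\Box_g)$; your Step 3 also correctly supplies the factor $(2\pi)^{-1}$, which the paper's statement of Proposition \ref{prop: Greenleaf-Uhlmann} leaves implicit, giving $-2(2\pi)^{-1}h_2=-\pi^{-1}h_2$.
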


\begin{proof}
Recall that $u_{(ij)}^{\fre}$ solves \eqref{eqn: free wave of two-fold linearization}. Then, \eqref{eqn: order of free wave of two-fold linearization} and \eqref{eqn: principal symbol of two-fold linearization} are given by Proposition \ref{prop: Greenleaf-Uhlmann} and \cite[Lemma 3.4]{LUW}.
\end{proof}

The following lemma computes the principal symbol of the remainder $u_{(ij)}^{\rem}$, which gives the lower order symbols of $u_{(ij)}$.

\begin{lemma}
  Let $\chi_{(ij)}$ be the microlocal cut-off defined in \eqref{eqn: property of the microlocal cut-off near y,eta_1+eta_2}. Then,
  \begin{equation}\label{eqn: order of the remainder wave of two-fold linearization}
    \chi_{(ij)} \l(u_{(ij)}^{\rem}\r) \in I^{2\mu-2}\l(N^{\ast}\l(K_{(i)} \cap K_{(j)}\r)  \r),
  \end{equation}
  with principal symbol
  \begin{multline}\label{eqn: principal symbol of the remiander wave of two fold-linearization}
 \sigma\l[u_{(ij)}^{\rem} \r]\l( \c{y},\c{\eta}_{(i)}+\c{\eta}_{(j)}\r) = \pi^{-1} h_{2}(\c{y})\sigma\l[\Box_g \r]^{-1}\l(\c{y},\c{\eta}_{(i)}+\c{\eta}_{(j)}\r)\\
 \times \l( \sigma\l[u_{(i)}\r]\l(\c{y},\c{\eta}_{(i)} \r)\sigma\l[u_{(j)}^{\rem}\r]\l(\c{y},\c{\eta}_{(j)}\r) +\sigma\l[u_{(j)}\r]\l(\c{y},\c{\eta}_{(j)} \r)\sigma\l[u_{(i)}^{\rem}\r]\l(\c{y},\c{\eta}_{(i)}\r) \r).
\end{multline}
\end{lemma}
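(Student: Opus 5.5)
We work microlocally near $(\c{y},\c{\eta}_{(i)}+\c{\eta}_{(j)})$, where $\chi_{(ij)}$ is elliptic and $\WF(\chi_{(ij)})\cap\Char(\Box_g)=\emptyset$. Hence $\Box_g+h_1$ is elliptic on $\WF(\chi_{(ij)})$. The plan is to invert it there by a parametrix, so that $\chi_{(ij)}u_{(ij)}^{\rem}$ coincides, modulo smoother terms, with $\chi_{(ij)}(\Box_g+h_1)^{-1}F_{(ij)}^{\rem}$. Its principal symbol is then $\sigma[\Box_g]^{-1}$ times the principal symbol of $F_{(ij)}^{\rem}$ at $(\c{y},\c{\eta}_{(i)}+\c{\eta}_{(j)})$. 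No transport equation is needed, since we never leave the elliptic region. The potential $h_1$ does not affect the principal symbol of an elliptic parametrix: $\sigma[\Box_g+h_1]=\sigma[\Box_g]$.

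Next we sort $F_{(ij)}^{\rem}$ in \eqref{eqn: the equation of u_ij^rem} by order using Proposition \ref{prop: Greenleaf-Uhlmann}. Microlocally away from $N^\ast K_{(i)}\cup N^\ast K_{(j)}$ the terms behave as follows.
\begin{itemize}
\item The products $u_{(i)}^{\fre}u_{(j)}^{\rem}$ and $u_{(i)}^{\rem}u_{(j)}^{\fre}$ lie in $I^{\mu+(\mu-1)+1}=I^{2\mu}$ of $N^\ast(K_{(i)}\cap K_{(j)})$. Their symbols are products of the factor symbols at $\c{\eta}_{(i)}$ and $\c{\eta}_{(j)}$.
\item The product $u_{(i)}^{\rem}u_{(j)}^{\rem}$ lies in $I^{2\mu-1}$, one order lower, so it is negligible.
\item The term $h_1u_{(ij)}^{\fre}$ is, by \eqref{eqn: order of free wave of two-fold linearization}, in $I^{2\mu-1}$ after cutoff, also one order lower. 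Here we need that $\chi_{(ij)}$ commutes with multiplication by $h_1$ modulo lower order, and that the wavefront set of $u_{(ij)}^{\fre}$ near this covector is only the conormal piece.
\end{itemize}
Hence $\chi_{(ij)}F_{(ij)}^{\rem}\in I^{2\mu}$, and dividing by the order-two elliptic symbol gives $\chi_{(ij)}u_{(ij)}^{\rem}\in I^{2\mu-2}$, which is \eqref{eqn: order of the remainder wave of two-fold linearization}.

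For the symbol, the normalization constant is fixed by the same bookkeeping as in the free case \eqref{eqn: principal symbol of two-fold linearization}. That case has the source $-2h_2u^{\fre}_{(i)}u^{\fre}_{(j)}$ and gives the coefficient $-\pi^{-1}h_2\sigma[\Box_g]^{-1}$, which corresponds to the $(2\pi)^{-1}$ factor from \cite[Lemma 3.4]{LUW}. Replacing one free factor by a remainder factor and keeping the source sign $-2h_2$ should produce the same constant, with the ordering of the two cross terms matching \eqref{eqn: principal symbol of the remiander wave of two fold-linearization}. We also use $\sigma[u_{(k)}^{\fre}]=\sigma[u_{(k)}]$ from \eqref{eqn: microlocal property of free linear waves}. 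The claimed formula, however, has sign $+\pi^{-1}$ rather than the $-\pi^{-1}$ that this reasoning gives. Checking the sign convention, and in particular whether the stated formula is off by a sign, is the first thing to verify. A global sign would not affect the later recovery argument.

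The main obstacle is rigor for the lower-order terms: showing that $u_{(i)}^{\rem}$, a paired Lagrangian distribution near $\c{x}_{(i)}$, is genuinely conormal of order $\mu-1$ near $\c{y}$, so that Proposition \ref{prop: Greenleaf-Uhlmann} applies. Near $\c{y}$ the waves are away from their sources, and since $(M,g)$ has no cut points in $\D$ the sets $K_{(k)}$ are smooth hypersurfaces near $\c{y}$. This is where the no-cut-point assumption is used. We also need that commutator and parametrix errors stay at least one order below.
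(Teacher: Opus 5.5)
Your proposal is correct and follows essentially the paper's route. You invert $\Box_g+h_1$ microlocally on $\WF(\chi_{(ij)})$, which lies off $\Char(\Box_g)$; the paper phrases this as $Q$ acting as an elliptic operator of order $-2$ there, plus a smooth cross term. You then use Proposition~\ref{prop: Greenleaf-Uhlmann} to discard $u^{\rem}_{(i)}u^{\rem}_{(j)}$ and $h_1u^{\fre}_{(ij)}$ as one order lower, and divide the symbol of $-2h_2\bigl(u^{\fre}_{(i)}u^{\rem}_{(j)}+u^{\rem}_{(i)}u^{\fre}_{(j)}\bigr)$ by $\sigma[\Box_g]$.

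Your sign concern is justified. This computation gives $-\pi^{-1}$, exactly as in the free-wave formula. That is also the sign the paper needs later, when it combines $\sigma[u^{\rem}_{(ij)}]\sigma[u_{(k)}]$ and $\sigma[u_{(ij)}]\sigma[u^{\rem}_{(k)}]$ into the single symmetric factor $\mathcal{B}$, so the $+\pi^{-1}$ in the statement is a typo.
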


\begin{proof}
Consider $i=1$ and $j=2$ without loss of generality. Recall that $u_{(12)}$ solves the equation \eqref{eqn: the equation of u_ij^rem} with inhomogeneous term
$$
 F_{(12)}^{\rem} = -2h_2\l( u_{(1)}^{\fre}u_{(2)}^{\rem} + u_{(1)}^{\rem}u_{(2)}^{\fre} + u_{(1)}^{\rem}u_{(2)}^{\rem}\r) - h_1 u_{(12)}^{\fre}.
$$

Applying Proposition \ref{prop: Greenleaf-Uhlmann} to $u_{(k)}^{\fre}$ and $u_{(k)}^{\rem}$ for $k=1,2$, together with the order of $u_{(12)}^{\fre}$ in \eqref{eqn: order of free wave of two-fold linearization}, gives that
\begin{align*}
   \chi_{(12)} \l(h_2 u_{(1)}^{\fre}u_{(2)}^{\rem} \r),   \chi_{(12)} \l(h_2 u_{(1)}^{\rem}u_{(2)}^{\fre} \r) &\in I^{2\mu}\l(N^{\ast}\l(K_{(1)}\cap K_{(2)}\r) \r),\\
   \chi_{(12)}\l(h_1 u_{(12)}^{\fre} \r),   \chi_{(12)} \l(h_2 u_{(1)}^{\rem} u_{(2)}^{\rem}\r) &\in I^{2\mu-1}\l(N^{\ast}\l(K_{(1)}\cap K_{(2)}\r) \r).
\end{align*}
Thus, the inhomogeneous term satisfies
\begin{equation}\label{eqn: order of the inhomogeneous term of the remainder wave of two-fold linearization}
  \chi_{(12)}F_{(12)}^{\rem} = -2 \chi_{(12)}h_{2} \l( u_{(1)}^{\fre}u_{(2)}^{\rem} + u_{(1)}^{\rem}u_{(2)}^{\fre} \r) \mod I^{2\mu-1}\l( K_{(1)}\cap K_{(2)}\r).
\end{equation}
Recall the solution operator $Q$ in \eqref{eqn: causal inverse}, which is the solution operator of the linear wave equation \eqref{eqn: linear relation}. Now decompose $\chi_{(12)}u_{(12)}^{\rem}$ into  
$$ 
\chi_{(12)}u_{(12)}^{\rem} =  \chi_{(12)}Q\l( \chi_{(12)} F_{(12)}^{\rem}\r) + \chi_{(12)}Q\l( \l(\id-\chi_{(12)}\r) F_{(12)}^{\rem}\r).
$$

 We claim that the second term $\chi_{(12)}Q\l( \l(\id-\chi_{(12)}\r) F_{(12)}^{\rem}\r)$ is smooth.
 In fact, by the property of $\chi_{(12)}$ in \eqref{eqn: property of the microlocal cut-off near y,eta_1+eta_2}, for all $(y,\eta)\in \WF(\chi_{(12)})$,  we have
$$
(y,\eta) \notin \WF\l(\l(\id-\chi_{(12)}\r)F_{(12)}^{\rem}\r),\quad\text{and}\quad (y,\eta)  \notin \Char(\Box_g).
$$
 \cite[Proposition 5.1.1]{D} yields that 
$$
\WF\l(\chi_{(12)} \r) \cap \WF\l( Q\l(\l( \id-\chi_{(12)}\r) F_{(12)}^{\rem} \r)\r) = \emptyset.
$$
Then the claim holds. 

Moreover, $Q$ acts as an elliptic pseudo-differential operator of order $-2$ away from $\Char\l[  \Box_g\r]$. In view of the order of $\chi_{(12)}F_{(12)}^{\rem} $ as in \eqref{eqn: order of the inhomogeneous term of the remainder wave of two-fold linearization}, we obtain that 
$$
   \chi_{(12)}Q\l(\chi_{(12)}F_{(12)}^{\rem} \r) \in I^{2\mu-2}\l(N^{\ast}\l(K_{(1)}\cap K_{(2)}\r) \r).
$$
Then, we have finished the proof of \eqref{eqn: order of the remainder wave of two-fold linearization}.

It remains to calculate the principal symbol of $u_{(12)}^{\rem}$. By \eqref{eqn: the equation of u_ij^rem}, 
$$
  \sigma\l[ u_{(12)}^{\rem}\r]\l(\c{y},\c{\eta}_{(1)} + \c{\eta}_{(2)}\r) = \l(\sigma\l[\Box_{g} \r]^{-1} \sigma\l[F_{(12)}^{\rem}\r]\r)\l(\c{y},\c{\eta}_{(1)} + \c{\eta}_{(2)}\r).
$$
Applying Proposition \ref{prop: Greenleaf-Uhlmann} and the expression of $F_{(12)}^{\rem}$ in \eqref{eqn: order of the inhomogeneous term of the remainder wave of two-fold linearization} 
 to compute the symbol of $F_{(12)}^{\rem}$, we obtain the expression in \eqref{eqn: principal symbol of the remiander wave of two fold-linearization}, which completes the proof.
\end{proof}

%\subsection{lower-order symbol calculus of three-fold linearizations}

We also need the microlocal structure of the inhomogeneous term $F_{(123)}$.

\begin{lemma}\label{lemma: microlocal structure of F_123}
 The distribution $F_{(123)}$ belongs to $I^{3\mu-1}\l(\Lambda_{(123)}\r)$, and modulo lower-order terms, it has the expression
$$
  F_{(123)} = \sum_{\{i,j,k \} = \{1,2,3\}} -\chi h_2 \l(\chi_{(ij)}\l( u_{(ij)}^{\rem} \r)u_{(k)}^{\fre}+ \chi_{(ij)} \l( u_{(ij)}^{\fre}\r) u_{(k)}^{\rem}  \r)\mod I^{3\mu-2}\l(\Lambda_{(123)} \r),
$$
where $\chi_{(ij)}$ are the microlocal cut-offs introduced in \eqref{eqn: property of the microlocal cut-off near y,eta_1+eta_2}.
\end{lemma}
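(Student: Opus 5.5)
The plan is to expand each summand of $F_{(123)}$ and read off its order at points of $\WF(\chi)$, using the microlocal cut-offs $\chi_{(ij)}$ to replace the two-fold waves by pure conormal distributions to $K_{(i)}\cap K_{(j)}$. Fix $\{i,j,k\}=\{1,2,3\}$. Write $u_{(ij)}^{\fre}=\chi_{(ij)}u_{(ij)}^{\fre}+(\id-\chi_{(ij)})u_{(ij)}^{\fre}$, and similarly for $u_{(ij)}^{\rem}$. By the wavefront calculus \eqref{eqn: wavefront set of three-fold linearization}, the pieces $(\id-\chi_{(ij)})u_{(ij)}^{\bullet}$ have wavefront set in $N^\ast K_{(i)}\cup N^\ast K_{(j)}$, together with points of $N^\ast(K_{(i)}\cap K_{(j)})$ away from $(\c{y},\c{\eta}_{(i)}+\c{\eta}_{(j)})$. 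The pieces $u_{(ij)}^{\bullet}$ may also carry flow-out singularities of $\Char(\Box_g)$ emanating from $N^\ast(K_{(i)}\cap K_{(j)})$ near $\c{y}$; these must be included in the same list.

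When such a piece is multiplied by $u_{(k)}$, the product's wavefront set near $\c{y}$ consists of sums $\zeta+\eta'$, where $\zeta$ comes from these sets and $\eta'\in N^\ast K_{(k)}$. Since $\c{\eta}=\sum_l\c{\eta}_{(l)}$ is the unique decomposition of $\c{\eta}$ with respect to the three transversal conormal directions, such sums avoid $(\c{y},\c{\eta})$, provided the cut-off $\chi_{(ij)}$ is equal to the identity near $(\c{y},\c{\eta}_{(i)}+\c{\eta}_{(j)})$ and $\chi$ is supported in a small enough conic neighbourhood. Hence $\chi h_2\big((\id-\chi_{(ij)})u_{(ij)}^{\bullet}\big)u_{(k)}^{\bullet}$ is smooth. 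This is the step I expect to be the main obstacle, because the flow-out pieces of $u_{(ij)}$ are not conormal. I would handle it exactly as in \cite[Lemma 3.3]{Chen-Lu-Zhang-2025} and \cite{LUW}, using \eqref{eqn: second wavefront set property} and the fact that the sum of two independent light-like covectors is not light-like.

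For the remaining pieces, Proposition \ref{prop: Greenleaf-Uhlmann} applies because $K_{(i)}\cap K_{(j)}$ and $K_{(k)}$ intersect transversally. Together with \eqref{eqn: order of free wave of two-fold linearization} and \eqref{eqn: order of the remainder wave of two-fold linearization}, and with $u_{(k)}^{\fre}\in I^{\mu}$ and $u_{(k)}^{\rem}\in I^{\mu-1}$ away from the source, this gives the following orders on $\Lambda_{(123)}$:
\[
\chi_{(ij)}(u_{(ij)}^{\rem})\,u_{(k)}^{\fre}\in I^{(2\mu-2)+\mu+1},
\qquad
\chi_{(ij)}(u_{(ij)}^{\fre})\,u_{(k)}^{\rem}\in I^{(2\mu-1)+(\mu-1)+1}.
\]
Both equal $I^{3\mu-1}(\Lambda_{(123)})$. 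The remaining product satisfies
\[
\chi_{(ij)}(u_{(ij)}^{\rem})\,u_{(k)}^{\rem}\in I^{(2\mu-2)+(\mu-1)+1}=I^{3\mu-2}(\Lambda_{(123)}).
\]
Multiplication by the smooth function $h_2$ and application of $\chi$ preserve these classes.

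Summing over the three choices of $\{i,j,k\}$ gives $F_{(123)}\in I^{3\mu-1}(\Lambda_{(123)})$, and discarding the $I^{3\mu-2}$ terms yields the stated expression. Its principal symbol is then the corresponding sum of products of principal symbols, by Proposition \ref{prop: Greenleaf-Uhlmann}.
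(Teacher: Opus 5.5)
Your proposal is correct and follows the paper's proof. You split each two-fold wave as $\chi_{(ij)}u_{(ij)}^{\bullet}+(\id-\chi_{(ij)})u_{(ij)}^{\bullet}$, discard the second pieces as microlocally smooth near $(\c{y},\c{\eta})$, and apply Proposition \ref{prop: Greenleaf-Uhlmann} to obtain the orders $3\mu-1$, $3\mu-1$ and $3\mu-2$; the paper does exactly this, citing \cite{CLOP} and \cite{KLOU} for the smoothness step. The extra flow-out singularities you worry about cause no trouble: $N^\ast(K_{(i)}\cap K_{(j)})\cap\Char(\Box_g)\subseteq N^\ast K_{(i)}\cup N^\ast K_{(j)}$, so their sums with $N^\ast K_{(k)}$ lie in $N^\ast(K_{(i)}\cap K_{(k)})\cup N^\ast(K_{(j)}\cap K_{(k)})$, which $\WF(\chi)$ avoids.
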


\begin{proof}
 Consider the summand with $(i,j,k) = (1,2,3)$ as a representative. We first decompose
$$
    \chi\l(h_2 u^{\rem}_{(12)}u_{(3)}^{\fre} \r)  = \chi\l(h_2 \chi_{(12)} \l(u^{\rem}_{(12)}\r) u_{(3)}^{\fre} \r)\\
     + \chi\l(h_2 \l(\id-\chi_{(12)} \r)\l(u^{\rem}_{(12)}\r) u_{(3)}^{\fre} \r). 
$$
The second term on the RHS is smooth, by \cite[pp. 2216]{CLOP}; see also the proof of \cite[Lemma 4.9]{KLOU}. From the order of $u_{(3)}^{\fre}$ in \eqref{eqn: microlocal property of free linear waves} and the order of $u_{(12)}^{\rem}$ in \eqref{eqn: order of the remainder wave of two-fold linearization}, Proposition \ref{prop: Greenleaf-Uhlmann} implies
\begin{align*}
  \chi\l(h_2 u_{(3)}^{\fre}\chi_{(12)} \l(u^{\rem}_{(12)}\r) \r) \in I^{3\mu-1}\l(\Lambda_{(123)} \r).
\end{align*}
Analogously, using the order of $u_{(12)}^{\fre}$ in \eqref{eqn: order of free wave of two-fold linearization} and the order of $u_{(3)}^{\rem}$ as in \eqref{eqn: lower order symbols}, we obtain
 \begin{align*}
  \chi\l(h_2 u^{\fre}_{(12)}u_{(3)}^{\rem} \r)  &= \chi\l(h_2 u_{(3)}^{\rem} \chi_{(12)}\l(u_{(12)}^{\fre}\r)\r) + C^{\infty} \in I^{3\mu-1}\l(\Lambda_{(123)} \r),\\
    \chi\l(h_2 u^{\rem}_{(12)}u_{(3)}^{\rem} \r)  &= \chi\l(h_2 u_{(3)}^{\rem}\chi_{(12)}\l(u_{(12)}^{\rem} \r)\r) + C^{\infty} \in I^{3\mu-2}\l(\Lambda_{(123)} \r).
\end{align*}
Then, we complete the proof.
\end{proof}

Now we turn back to Proposition \ref{prop: principal symbol of v_123^rem}.

\begin{proof}[Proof of Proposition \ref{prop: principal symbol of v_123^rem}]

According to Lemma \ref{lemma: microlocal structure of F_123} and Proposition \ref{prop: Greenleaf-Uhlmann}, the principal symbol of $F_{(123)}$ is given by
\begin{align} \label{initial version of principal symbol of F_123}\begin{aligned}
\lefteqn{\sigma\l[F_{(123)}\r](\c{y},\c{\eta})} \\
  &= \sum_{\{i,j,k\} = \{1,2,3\}} -(2\pi)^{-1}h_{2}(\c{y})\sigma\l[ u_{(ij)}^{\rem}\r]\l(\c{y},\c{\eta}_{(i)}+\c{\eta}_{(j)}\r)\sigma\l[u_{(k)}\r]\l(\c{y},\c{\eta}_{(k)}\r)\\
  &\quad +\sum_{\{i,j,k\} = \{1,2,3\}} -(2\pi)^{-1}h_{2}(\c{y})\sigma\l[ u_{(ij)}\r]\l(\c{y},\c{\eta}_{(i)}+\c{\eta}_{(j)}\r)\sigma\l[u^{\rem}_{(k)}\r]\l(\c{y},\c{\eta}_{(k)}\r)\\
  &= \sum_{\{i,j,k\} = \{1,2,3\}}2(2\pi)^{-1}h^{2}_{2}(\c{y})\sigma\l[ \Box_g\r]^{-1}\l(\c{y},\c{\eta}_{(i)}+\c{\eta}_{(j)}\r)\mathcal{B}\l(\c{y},\c{\eta}_{(1)},\c{\eta}_{(2)},\c{\eta}_{(3)}\r),\end{aligned}
\end{align}
where by \eqref{eqn: principal symbol of two-fold linearization} and \eqref{eqn: principal symbol of the remiander wave of two fold-linearization}, the term $\mathcal{B}\l(\c{y},\c{\eta}_{(1)},\c{\eta}_{(2)},\c{\eta}_{(3)}\r)$ is defined by
\begin{align*}
  \mathcal{B}\l(\c{y},\c{\eta}_{(1)},\c{\eta}_{(2)},\c{\eta}_{(3)}\r)  :&=  \sigma\l[u_{(i)}^{\rem}\r]\l(\c{y},\c{\eta}_{(i)}\r)\sigma\l[u_{(j)}\r]\l(\c{y},\c{\eta}_{(j)}\r)\sigma\l[u_{(k)}\r]\l(\c{y},\c{\eta}_{(k)}\r)\\
  &+\sigma\l[u_{(i)}\r]\l(\c{y},\c{\eta}_{(i)}\r)\sigma\l[u_{(j)}^{\rem}\r]\l(\c{y},\c{\eta}_{(j)}\r)\sigma\l[u_{(k)}\r]\l(\c{y},\c{\eta}_{(k)}\r)\\
  &+\sigma\l[u_{(i)}\r]\l(\c{y},\c{\eta}_{(i)}\r)\sigma\l[u_{(j)}\r]\l(\c{y},\c{\eta}_{(j)}\r)\sigma\l[u_{(k)}^{\rem}\r]\l(\c{y},\c{\eta}_{(k)}\r).
\end{align*}
Since $(i,j,k)$ ranges over all permutations of $(1,2,3)$, we may use the homogeneity relation \eqref{relation between the principal symbol of free wave and remiander wave by homogeneity} to write 
\begin{equation}\label{eqn: the expression for B}
  \mathcal{B}\l(\c{y},\c{\eta}_{(1)},\c{\eta}_{(2)},\c{\eta}_{(3)}\r)  = -\imath\prod_{j=1}^{3}\sigma\l[u_{(j)}\r]\l(\c{y},\c{\eta}_{(j)} \r) \sum_{j=1}^{3}r^{2}\kappa_{(j)}^{-1}\int_{\gamma_{(j)}}h_1.
\end{equation}
Recall the definition of the symbol factor $\mathcal{G}_{1}$ from \eqref{eqn: factor derived from the symbol of Q}. We substitute \eqref{eqn: the expression for B} into \eqref{initial version of principal symbol of F_123} to obtain 
\begin{equation}\label{expression for the principal symbol of F_123}
\sigma\l[F_{(123)} \r](\c{y},\c{\eta})  = 2(2\pi)^{-1} h_{2}^{2}(\c{y}) \mathcal{G}_{1}(\c{y},\c{\eta})
\mathcal{B}\l(\c{y},\c{\eta}_{(1)},\c{\eta}_{(2)},\c{\eta}_{(3)}\r).
\end{equation}

Since the solution $v_{(123)}$ to \eqref{singular term} lies in $I^{3\mu-3/2}\l(\Lambda_{(123)},\Lambda_{(123)}^{g} \r)$, we use Lemma \ref{lemma: microlocal structure of F_123} to obtain
\begin{equation}
  \begin{aligned}
    \begin{cases}
     	\mathscr{L}_{H_{\Box_g}}\sigma\l[v_{(123)}^{\rem}\r] = -\imath h_1 \sigma\l[v_{(123)}\r],\quad &\text{on }\Lambda_{(123)}^{g}\b\partial\Lambda_{(123)}^{g},\\
 			\sigma\l[v_{(123)}^{\rem}\r] = \mathscr{R}\l(\sigma[\Box_g]^{-1}\sigma\l[F_{(123)}\r]\r),\quad &\text {on }\partial\Lambda_{(123)}^{g}.
 			\nonumber
    \end{cases}
  \end{aligned}
\end{equation}

Next, we solve this transport equation along $\beta_{\out}$. By the proof of Lemma \ref{lemma: microlocal structure of the higher order remiander wave of three fold-linearization}, there exists a zeroth order strictly positive factor $\alpha_{\out}$ such that
\begin{align}
\label{eqn:first expression of the principal symbol of v_123^rem}\lefteqn{\sigma\l[v_{(123)}^{\rem}\r](\c{z},\c{\zeta})}\\
\notag& = -\imath\sigma\l[v_{(123)}\r](\c{z},\c{\zeta})\int_{0}^{s_{\out}}h_1(\gamma_{\out}(s))ds + \alpha_{\out}(\c{z},\c{\zeta})\sigma\l[v_{(123)}^{\rem}\r](\c{y},\c{\eta}).
\end{align}
Recall the expression for $\sigma\l[v_{(123)}\r]$ in \eqref{eqn: microlocal property of v_123} and \eqref{expression for the lower order principal symbol of three-fold linearization}. By the expression for $\sigma\l[F_{(123)}\r]$ in \eqref{expression for the principal symbol of F_123} and Remark \ref{eqn: principal symbol calculus in different languages}, we have 
\begin{align}
\notag\alpha_{\out}(\c{z},\c{\zeta})\sigma\l[v_{(123)}^{\rem}\r](\c{y},\c{\eta}) &= \alpha_{\out}(\c{z},\c{\zeta})\mathscr{R}\l( \sigma\l[ \Box_g\r]^{-1} \sigma\l[ F_{(123)}\r]\r)(\c{y},\c{\eta})\\
\notag&=\sigma[Q](\c{z},\c{\zeta},\c{y},\c{\eta})\sigma\l[F_{(123)}\r](\c{y},\c{\eta})\\
\label{eqn:second expression of the principal symbol of v_123^rem}&=-\imath\sigma\l[v_{(123)} \r](\c{z},\c{\zeta})\sum_{j=1}^{3}r^{2}\kappa_{(j)}^{-1}\int_{\gamma_{(j)}}h_1.
\end{align}
Therefore, \eqref{principal symbol of v_123^rem} holds by combining \eqref{eqn:first expression of the principal symbol of v_123^rem} and \eqref{eqn:second expression of the principal symbol of v_123^rem}.
\end{proof}

We invoke the lower order symbol calculus to reconstruct $h_1$ on $\supp(h_2) \setminus \supp(h_3)$.

\begin{proof}[Proof of \eqref{eqn: Recovery of the potential via quadratic nonliearities}]
We first assume $h_{2}(\c{y})\neq 0$. Recall the decomposition of $u_{(123)}$ into $v_{(123)}$ and $w_{(123)}$ given in \eqref{singular term} and \eqref{smooth term}, and the equation for $v_{(123)}^{\fre}$ in \eqref{eqn: equation for v_123^fre}. Since $u_{(123)}$ is uniquely determined by the source-to-solution map $L_{N}$, and $h_3$ is known by Proposition \ref{h_3 is known}, the difference
$$
   v_{(123)}^{\rem} + w_{(123)} = u_{(123)} - v^{\fre}_{(123)}
$$
is uniquely determined by $L_{h_1,H}$. Moreover, $(\c{z},\c{\zeta})$ does not lie in the wavefront set of $w_{(123)}$ as in \eqref{eqn: regularity of w_123}. Namely, $w_{(123)}$ is microlocally smooth near $(\c{z},\c{\zeta})$ and thus negligible in the sense of singularities. Consequently, $\sigma\l[v_{(123)}^{\rem}\r]$   is uniquely determined by $L_{h_1,H}$. It follows from \eqref{principal symbol of v_123^rem} that 
$$
   \l(h_{2}^{(1)}(\c{y})\r)^{2}\int_{0}^{s_{\out}}h_1^{(1)}(\gamma_{\out}(s))\,ds =    \l(h_{2}^{(2)}(\c{y})\r)^{2}\int_{0}^{s_{\out}}h_1^{(2)}(\gamma_{\out}(s))\,ds. 
$$
By the assumption $\c{y}\in\supp(h_2) \setminus \supp(h_3)$ and the uniqueness \eqref{eqn: uniquness of the square of h_2}	of $|h^{2}|$ on $\D\b\supp(h_3)$, we have
$$
  \int_{0}^{s_{\out}} h_1^{(1)}(\gamma_{\out}(s))\,ds = \int_{0}^{s_{\out}} h_1^{(2)}(\gamma_{\out}(s))\,ds.
$$
Differentiating the integral at $s = 0$ yields $h_1^{(1)}(\c{y}) = h_1^{(2)}(\c{y})$.

If $\c{y} \in\supp(h_2) \b \supp(h_3)$ and $h_{2}(\c{y}) = 0$, a continuity argument concludes the proof. 
\end{proof}

\section{The $m$-wave linearization}

%\subsection{Inverse potential III : via higher order nonlinearity}\label{sec: Recovery of the potential via higher order nonlinearities}

This section is to recover the potential $h_1$ on the support of higher-order nonlinearities, which completes the proof of uniqueness of $h_1$. 

\begin{proposition}\label{prop: uniqueness of $V$ via higher order nonlinearity}Let $(M, g)$, $\D$, $\mho$, $N^{(j)}$, and $L_{N^{(j)}}$ be as in Theorem \ref{thm : simplified theorem}. Suppose that $L_{N^{(1)}} = L_{N^{(2)}}$. Then there holds that
    $$
         h_{1}^{(1)}(\c{y}) = h_{1}^{(2)}(\c{y}).
    $$
\end{proposition}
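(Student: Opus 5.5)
Given Proposition \ref{prop : 3 wave}, $h_1^{(1)}=h_1^{(2)}$ already holds on $\supp h_2\cup\supp h_3$. So it suffices to fix $\c{y}\in\D\setminus(\supp h_2\cup\supp h_3)$ and show $h_1^{(1)}(\c{y})=h_1^{(2)}(\c{y})$. By assumption (ii), $N(\c{y},\cdot)$ is not linear, so some $h_k(\c{y})\neq 0$ with $k\ge 4$. Let $m$ be the smallest such $k$. Then $h_2,\dots,h_{m-1}$ vanish at $\c{y}$, and on the open complement of $\supp h_2\cup\supp h_3$ they vanish near $\c{y}$ for $j=2,3$. We also need this for $4\le j\le m-1$.

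I would argue by induction on $m$. Suppose $h_1$ and $h_4,\dots,h_{m-1}$ are already known to agree near points where a lower-order coefficient is nonzero. Then at points with $h_j(\c{y})=0$ for $2\le j\le m-1$ but $h_m(\c{y})\ne0$, perturb $\c{y}$ slightly. If $h_j$ does not vanish identically nearby for some $j<m$, the claim at nearby points follows from the case of smaller $m$, and continuity of $h_1$ closes it. So it suffices to treat $\c{y}$ in the interior of the set where $h_2=\dots=h_{m-1}\equiv0$ and $h_m\ne0$, and then pass to the closure by continuity.

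At such $\c{y}$, take sources $\sum_{j=1}^3\e_{(j)}f_{(j)}+\sum_{l=4}^m\e_{(l)}f_{\lambda,\c{x}_{(l)},\c{\xi}_{(l)}}$. Here the first three are the conormal sources of Section \ref{sec : 3 wave linearization}, and the remaining $m-3$ are Gaussian beam sources \eqref{eqn: construction of source of the Gaussian beam} whose geodesics pass through $\c{y}$ and are causally independent of the others. Differentiate the equation $m$ times at $\e=0$. Since $h_2,\dots,h_{m-1}$ vanish near $\c{y}$, every product term supported near $\c{y}$ drops out except $-m!\,h_m\prod_j u_{(j)}$. Contributions from interactions away from $\c{y}$ lack the covector $\c{\eta}$ along $\Gamma$, and I would cut them off microlocally as in \eqref{eqn: microlocal cut-off}.

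Near $\c{y}$, the Gaussian beams are smooth, with expansion \eqref{eqn: expansion of Gaussian beams at y}. Thus $h_m u_{(4)}\cdots u_{(m)}$ acts like a smooth coefficient $h_m(\c{y})\bigl(1+\lambda^{-1}(\cdots)\bigr)$ multiplying the triple product $u_{(1)}u_{(2)}u_{(3)}$. The analysis of Proposition \ref{prop: principal symbol of three wave interaction} and Lemma \ref{lemma: microlocal structure of the higher order remiander wave of three fold-linearization} then runs essentially unchanged, with $h_3$ replaced by $h_m\prod_{l\ge4}u_{(l)}$. At $(\c{z},\c{\zeta})$, the principal symbol is $\lim_\lambda$ of a factor times $h_m(\c{y})$. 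This recovers $h_m(\c{y})$, since $b_{1,0}$ and the free waves do not involve $h_1$ and $a_{0,0}(\c{y})=1$.

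For $h_1$, I would take the free/remainder splitting as in Lemma \ref{lemma: microlocal structure of the higher order remiander wave of three fold-linearization}. The subprincipal symbol at $(\c{z},\c{\zeta})$ then contains $h_m(\c{y})\bigl(\int_0^{s_\out}h_1(\gamma_\out)+O(r^2)\bigr)$. The Gaussian beams add the $\lambda^{-1}$ terms $-\tfrac{\imath}{2}\int_{\gamma_{(l)}}h_1$, which can be separated by taking $\lambda\to\infty$ first (leading order in $\lambda$), and the $O(r^2)$ terms are removed by $r\to0$. Equating the data for $N^{(1)}$ and $N^{(2)}$ and dividing by $h_m(\c{y})\neq0$ gives equality of $\int_0^{s_\out}h_1^{(j)}(\gamma_\out(s))\,ds$. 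Moving $\c{y}$ along $\gamma_\out$ and differentiating then gives $h_1^{(1)}(\c{y})=h_1^{(2)}(\c{y})$.

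The main obstacle is the bookkeeping of the two asymptotic parameters $\lambda$ and the symbol order. The Gaussian beams are not conormal, so the product with them has to be read as multiplication by a $\lambda$-dependent smooth function. One must check that its $\lambda^{-1}$ correction does not mix with the subprincipal symbol containing $h_1$ along $\gamma_\out$. I would first try to fix $\lambda$, treat $\prod_{l\ge4}u_{(l)}$ as an unknown smooth weight $c_\lambda(x)$ near $\c{y}$, and read off $c_\lambda(\c{y})h_m(\c{y})\int_{\gamma_\out}h_1$ from the ratio of subprincipal to principal symbols. In that ratio the weight cancels.

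A second point is that the induction needs $h_m^{(1)}(\c{y})\neq0$ if and only if $h_m^{(2)}(\c{y})\neq0$, with the same minimal $m$ for both nonlinearities. This follows from the principal symbol identity: the lowest $m$ at which the $m$-fold data is singular at $(\c{z},\c{\zeta})$ is determined by the data.
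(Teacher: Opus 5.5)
Most of your proposal matches the paper. The reduction (Proposition~\ref{prop : 3 wave} on $\supp h_2\cup\supp h_3$, the minimal index $m$, continuity to reach points where $h_2,\dots,h_{m-1}\equiv0$ near $\c{y}$ and $h_m(\c{y})\neq0$, and identifying these sets for $N^{(1)},N^{(2)}$ via the principal symbol) is essentially the paper's decomposition into the sets $S_m$. Recovering $h_m(\c{y})$ from the $\lambda^0$ part of the principal symbol is also what the paper does.

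\textbf{The gap.} It lies in the step where you read $\int_0^{s_\out}h_1(\gamma_\out(s))\,ds$ off a subprincipal symbol.
\begin{itemize}
\item \emph{No exactly known free wave.} In Lemma~\ref{lemma: microlocal structure of the higher order remiander wave of three fold-linearization} this works only because $\mathcal{V}^{\fre}_{(123)}$ is known exactly: its source $-6h_3u^{\fre}_{(1)}u^{\fre}_{(2)}u^{\fre}_{(3)}$ has no unknowns. So every next-order contribution other than the $h_1$-transport and the $O(r^2)$ terms is subtracted off, including those from derivatives of $h_3$. Here the coefficient of $u_{(1)}u_{(2)}u_{(3)}$ is $h_mc_\lambda$ with $c_\lambda:=\prod_{l\ge4}u_{(l)}$, and $c_\lambda$ depends on $h_1$.
\item \emph{Using free beams does not help.} If you build the free source with free beams $u^{\fre}_\lambda$, the remainder source $-m!\,h_mu^{\fre}_{(1)}u^{\fre}_{(2)}u^{\fre}_{(3)}\bigl(u_\lambda^{m-3}-(u^{\fre}_\lambda)^{m-3}\bigr)$ is of top order. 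The remainder is then not one order smoother.
\item \emph{The weight does not cancel in the ratio.} Only the value $c_\lambda(\c{y})$ cancels. Multiply a distribution conormal to $K_{(1)}\cap K_{(2)}\cap K_{(3)}=\{x''=0\}$, with symbol $a(x',\xi'')$, by a smooth $c$. This creates at the next order the term $\imath\,\partial_{x''}c(x',0)\cdot\partial_{\xi''}a$. So your ratio contains $\imath\,\partial_{x''}\log(h_mc_\lambda)(\c{y})\cdot\partial_{\xi''}\log a$.
\item \emph{This term is large and $h_1$-dependent.} Each beam carries $e^{\imath\lambda\phi}$ with $d\phi(\c{y})=\dot\gamma^\flat\neq0$, so $dc_\lambda(\c{y})/c_\lambda(\c{y})$ is of size $\lambda$. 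Its lower-order part involves $da_1(\c{y})$, which depends on $h_1$ along the beam. So the beams do not merely add $\lambda^{-1}$ ray integrals at the subprincipal level, and sending $\lambda\to\infty$ first amplifies their contribution rather than removing it.
\end{itemize}

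\textbf{The missing idea.} The $\lambda^{-1}$ ray integral you discard as noise is the signal, and it sits at the principal level. The paper takes all $m-3$ beams to be the single beam $u_\lambda$ along $\gamma_{(1)}$ from $\c{x}_{(1)}$ to $\c{y}$, and compares only principal symbols at $(\c{z},\c{\zeta})$. Since $\sigma[Q]$ and $\sigma[u_{(j)}]$ do not involve $h_1$, this gives for every $\lambda$
\[
h_m^{(1)}(\c{y})\bigl(u^{(1)}_\lambda(\c{y})\bigr)^{m-3}=h_m^{(2)}(\c{y})\bigl(u^{(2)}_\lambda(\c{y})\bigr)^{m-3}.
\]
Then, using \eqref{eqn: expansion of Gaussian beams at y}:
\begin{itemize}
\item The $\lambda^0$ terms give $h_m^{(1)}(\c{y})=h_m^{(2)}(\c{y})$.
\item Since $b_{1,0}$ is independent of $h_1$, the $\lambda^{-1}$ terms give $\int_0^{s_\into}h_1^{(1)}(\gamma_{(1)}(s))\,ds=\int_0^{s_\into}h_1^{(2)}(\gamma_{(1)}(s))\,ds$.
\item This holds at every point of $\gamma_{(1)}$ near $\c{y}$, because $\{h_m\neq0\}\setminus\bigcup_{2\le j<m}\supp h_j$ is open. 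Differentiating in the endpoint gives $h_1^{(1)}(\c{y})=h_1^{(2)}(\c{y})$.
\item Points with $h_m(\c{y})=0$ follow by continuity.
\end{itemize}
Even a repaired version of your route needs this identity. When you compare $N^{(1)}$ with $N^{(2)}$, the $O(\lambda)$ next-order mismatch vanishes precisely because $c^{(1)}_\lambda(\c{y})=c^{(2)}_\lambda(\c{y})$. At that point the proof is already complete without any subprincipal calculus.
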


The proof is based on the interaction of $3$ distorted plane waves and $m-3$ Gaussian beams. The strategy is to take $m$-fold linearization with these waves, and measure the singularities of the $m$-linearized wave.

Specifically, consider $\c{y}\in\D\b\rotom$ satisfying that there exists $m \ge 4$ such that
	\begin{equation}\label{eqn: support condition}
		\c{y} \in \supp(h^{(k)}_m) \quad\text{and}\quad \c{y}\notin \mathop{\bigcup}_{j=2}^{m-1} \supp(h^{(k)}_j),\quad k = 1,2.
	\end{equation}

Construct three distorted plane waves $u_{(j)}\in I^{\mu}
\l(N^{\ast}\{\c{x}_{(j)}\}, N^{\ast}K_{(j)}\r)$ for $j = 1,2,3$ as in Section \ref{sec: three wave interactions}. Additionally, construct the auxiliary source $f_{\lambda,\c{x}_{(1)},\c{\xi}_{(1)}} \in C_{c}^{\infty}(\rotom)$ as in \eqref{eqn: construction of source of the Gaussian beam}. 
Write for $i\ge 4$,
\begin{equation}\label{eqn: notation of smooth wave}
 u_{(i)} := u_{\lambda,\c{x}_{(1)},\c{\xi}_{(1)}}\in C^{\infty}(M), \quad \text{where}\quad \l(\Box_g+h_1 \r) u_{\lambda,\c{x}_{(1)},\c{\xi}_{(1)}} = f_{\lambda,\c{x}_{(1)},\c{\xi}_{(1)}} .
\end{equation}
Moreover, $u_{\lambda,\c{x}_{(1)},\c{\xi}_{(1)}}$ satisfies the Taylor expansion \eqref{eqn: expansion of Gaussian beams at y} at $\c{y}$. For convenience, write $f_{\lambda} := f_{\lambda,\c{x}_{(1)},\c{\xi}_{(1)}} $ and $u_{\lambda}:= u_{\lambda,\c{x}_{(1)},\c{\xi}_{(1)}} $.

Denote $\e := (\e_{(1)},\e_{(2)},\dots,\e_{(m)})$ for small $\e_{(j)} > 0$. We enter in \eqref{eqn : semilinear wave with power series} the source
\begin{equation}\label{eqn: source to recover higher order nonlinearities}
  f = \e_{(1)}f_{(1)}+\e_{(2)}f_{(2)}+\e_{(3)}f_{(3)} + \sum_{j=4}^{m}\e_{(j)}f_{\lambda}.
\end{equation}
 The solution $u(\e)$ of the semilinear model with source \eqref{eqn: source to recover higher order nonlinearities} is smoothly dependent on $\e$. Moreover, the first-order linearization with source \eqref{eqn: source to recover higher order nonlinearities} is similar to \eqref{one-fold linearization}. That is, for $u_{(j)} = \partial_{\e_{(j)}}u(\e)|_{\e =0}$ with  $1\le j\le m$, it holds that
$$
  \l(\Box_g +h_1 \r)u_{(j)} =f_{(j)},
$$
where $f_{(j)} = f_{\lambda}$ for $ 4 \le j\le m$.

Next, we iteratively perform $l$-fold linearization for $l=2,\dots ,m$. For the set 
\begin{equation}\label{eqn: multi-index set}
\alpha = \{\alpha_1,\alpha_2,\dots,\alpha_l\}\subseteq \Z_{+}\quad \text{with}\quad 1\le \alpha_1<\alpha_2<\cdots<\alpha_l\le m,
\end{equation}
we denote by
$$
   u_{(\alpha)} := \partial_{\e_{(\alpha_1)}}\partial_{\e_{(\alpha_2)}}\cdots\partial_{\e_{(\alpha_l)}}u(\e)|_{\e=0}.
$$

For any non-empty subset $\beta\subseteq\alpha\subseteq \{1,\dots,m\}$, we also set
\[
u_{(\beta)}:=\prod_{\gamma\in\beta}\partial_{\varepsilon(\gamma)}u(\varepsilon)\big|_{\varepsilon=0}.
\]
For an integer $k\ge2$, we denote by $\mathcal Q_k(\alpha)$ the set of all partitions of $\alpha$ into $k$ non-empty blocks, that is
\[
\mathcal Q_k(\alpha)
 := \left\{\,  \{B_1,\dots,B_k\} \;\middle|\;
   \begin{array}{l}
     \varnothing \ne B_j \subset \alpha \ \text{for } 1\le j\le k,\\
     B_i \cap B_j = \varnothing \ \text{for } i\ne j,\text{ and }
     \displaystyle\bigcup_{j=1}^k B_j = \alpha
   \end{array}
  \right\}.
\]

Thus, $u_{(\alpha)}$ solves the $l$-fold linearized wave equation
\begin{align}
 \label{eqn: l-fold linearization} (\Box_g + h_1)u_{(\alpha)} &=  -\sum_{k=2}^{l}  h_k\l. \l(\partial_{\e_{(\alpha_1)}}\cdots\partial_{\e_{(\alpha_l)}} u^{k}(\e)\r)\r|_{\e=0}\\
  &
  \notag= -\sum_{k=2}^{l} k!\,h_k
     \sum_{\{B_1,\dots,B_k\}\in\mathcal Q_k(\alpha)}
      u_{(B_1)}\cdots u_{(B_k)}.
\end{align} 

%where $\{\beta_{k,j}\}_{j=1}^{k}$ is a family of multi-index sets such that \footnote{What is the difference between $\cup$ and $\sqcup$?}
%$$
%  \emptyset \neq\beta_{k,j} \subseteq \alpha \quad \text{and}\quad \bigsqcup_{j=1}^{k}\beta_{k,j} =\alpha, \quad\text{where}\quad 1\le j \le k, 2\le k \le l.
%$$
%The summation in \eqref{eqn: l-fold linearization} is overall partitions\footnote{What does this mean?} $\{ \beta_{k,j}\}_{j=1}^{k}$ of $\alpha$.

In particular, $$u_{(12\cdots m)} = \partial_{\e_{(1)}}\partial_{\e_{(2)}}\cdots\partial_{\e_{(m)}}u(\e)|_{\e = 0}$$ solves the $m$-fold linearized wave equation 
\begin{equation}\label{m-fold linearization}  
(\Box_g + h_1)u_{(12\cdots m)} =  - m!h_{m}u_{(1)}u_{(2)}u_{(3)}u_{\lambda}^{m-3}+\sum_{k=2}^{m-1}
  f_{k},
\end{equation}
where for convenience,
\begin{equation}\label{definition of the m-linearized source}
  f_{k} := -k! h_k \sum_{\{B_1,\dots,B_k\}\in\mathcal Q_k(\{1,2,\dots,m\})}
      u_{(B_1)}\cdots u_{(B_k)}.
\end{equation}
Recall that $\Gamma$ is the image of the bicharacteristic connecting $(\c{y},\c{\eta})$ and $(\c{z},\c{\zeta}).$
To detect the singularities of $u_{(12\cdots m)}$, we first prove
\begin{proposition}\label{prop: popagation of regularity}
  For $f_{k}$ in \eqref{definition of the m-linearized source} and $\Gamma$ in \eqref{eqn: image of the bicharacteristics}, it holds that
  $$
    \WF(f_k) \cap \Gamma = \emptyset.
  $$
\end{proposition}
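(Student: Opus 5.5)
The plan is to prove the statement microlocally near $\Gamma$ and to split $\Gamma$ into two parts: the point $\c{y}$ itself and the rest of $\Gamma$. The basic observation is that every $f_k$ with $2\le k\le m-1$ carries the factor $h_k$. By the support condition \eqref{eqn: support condition}, $\c{y}\notin\supp h_k$ for $2\le k\le m-1$, so there is a neighbourhood $U$ of $\c{y}$ on which $h_k\equiv 0$, hence $f_k\equiv 0$ on $U$. This gives $\WF(f_k)\cap T^\ast U=\emptyset$, which disposes of every point of $\Gamma$ whose base point lies in $U$, in particular $(\c{y},\c{\eta})$.

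For the remaining points $\beta_{\out}(s)$ with $\gamma_{\out}(s)\notin U$, I would bound $\WF(f_k)$ by the wavefront set of the product terms $u_{(B_1)}\cdots u_{(B_k)}$. The Gaussian beam factors $u_\lambda$ are smooth for fixed $\lambda$, so they do not enlarge wavefront sets. Consequently each $u_{(B)}$, and each product, has wavefront set contained in the set generated by the three distorted plane waves. Concretely, I would argue inductively in $|\alpha|$ through \eqref{eqn: l-fold linearization}, using \cite[Theorem 8.2.10]{H1} together with propagation of singularities for $Q$, as in \cite[(57)]{KLU}. The inductive claim is that $\WF(u_{(B)})$ is contained in the union of \eqref{eqn: wavefront set of three-fold linearization}, the future flowouts of these sets under the bicharacteristic flow of $\Box_g$, and the zero section's complement over $\cup_j K_{(j)}$. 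The inductive step amounts to checking that multiplication by smooth functions $h_k$, $u_\lambda$ preserves this class.

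It then remains to show that $\Gamma$ minus its part over $U$ avoids this set. By \eqref{eqn: second wavefront set property}, $\Gamma$ avoids $N^\ast K_{(j)}$ and $N^\ast(K_{(j)}\cap K_{(k)})$. By \eqref{eqn: first wavefront set property}, it meets $\Lambda_{(123)}$ only at $(\c{y},\c{\eta})$. The flowout contributions are the step I expect to be the main obstacle. A flowout point lying on $\Gamma$ would have to originate from a point of $\Gamma$ itself, because flowouts stay on bicharacteristics. Only flowouts of $\Lambda_{(123)}$ (the others are non-characteristic or are the $N^\ast K_{(j)}$ themselves) can produce such points, and these originate at $(\c{y},\c{\eta})\in T^\ast U$, where the sources vanish. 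So the flowout of $\Lambda_{(123)}$ contributes only through sources that are nonzero at base points on $\Gamma$, and those do not exist there.

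To make this rigorous, I would shrink the supports: take $r$ and $s_0$ small so that the three-wave intersection $K_{(1)}\cap K_{(2)}\cap K_{(3)}$ near $\Gamma$ lies inside $U$. The no-cut-point assumption in $\D$ and Lemma \ref{lemma: broken light-like geodesics} ensure that $\gamma_{\out}$ meets $\cup_j K_{(j)}$ only at $\c{y}$. Combining this with $f_k=0$ near $\c{y}$ gives $\WF(f_k)\cap\Gamma=\emptyset$.
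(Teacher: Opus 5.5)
Your ingredients match the paper's: the vanishing of $h_2,\dots,h_{m-1}$ near $\c{y}$, the smoothness of $u_\lambda$, and \eqref{eqn: first wavefront set property}--\eqref{eqn: second wavefront set property}. However, the induction as you set it up neither closes nor gives the conclusion.

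\textbf{The inductive class is too large.} Call your class $\mathcal{C}$. It contains the future flowout $\Lambda^{g}_{(123)}$. Through the piece ``all nonzero covectors over $\cup_j K_{(j)}$'' it also contains the flowout of every null covector over $\c{y}$. Both sets contain $\beta_{\out}([0,s_{\out}])$, in particular $(\c{z},\c{\zeta})$. So the bound $\WF(u_{(B)})\subseteq\mathcal{C}$ is fully compatible with $\Gamma\cap\WF(u_{(B)})\neq\emptyset$.

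\textbf{The repair only works at the first generation.} Your argument is: a flowout point on $\Gamma$ must start on $\Gamma$, the only possible start is $(\c{y},\c{\eta})$, and the sources vanish there. For $u_{(\alpha)}$ with $\{1,2,3\}\subseteq\alpha$ and $|\alpha|>3$, the source contains $h_k\,u_{(B_1)}u_\lambda\cdots u_\lambda$ with $\{1,2,3\}\subseteq B_1$. This term is singular on $\Gamma$ away from $U$ unless you already know $\WF(u_{(B_1)})\cap\Gamma=\emptyset$. Also, ``sources nonzero at base points on $\Gamma$ do not exist'' is false as stated: off $U$ the sources are generally nonzero on $\gamma_{\out}$, and only their wavefront set matters.

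\textbf{The inductive step is misdescribed.} It is not just multiplication by the smooth functions $h_k$ and $u_\lambda$. Products $u_{(B_1)}u_{(B_2)}u_{(B_3)}$ with $j\in B_j$ have several singular factors. $\mathcal{C}$ is not closed under such products in general: two factors singular along different bicharacteristics of $\Lambda^{g}_{(123)}$ through one point would produce covectors outside $\mathcal{C}$.

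\textbf{What is missing is the paper's bookkeeping by $\alpha\cap\{1,2,3\}$.} Lemmas \ref{lemma: first singularity analysis}--\ref{lemma: third singularity analysis} show that $u_{(\alpha)}$ is smooth, has $\WF\subseteq N^\ast K_{(j)}$, or has $\WF\subseteq N^\ast K_{(j)}\cup N^\ast K_{(k)}\cup N^\ast(K_{(j)}\cap K_{(k)})$, according as this intersection is empty, $\{j\}$, or $\{j,k\}$. Consequently a block containing all of $1,2,3$ only ever has smooth companions.

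With this in hand, prove by induction the statement you actually need: $\Gamma\cap\WF(u_{(\alpha)})=\emptyset$ for $\{1,2,3\}\subseteq\alpha$ with $|\alpha|\le m-1$ (Lemma \ref{lemma: fourth singularity analysis}). In each step the source misses $\Gamma$ for one of two reasons. Either one block contains $\{1,2,3\}$, and the induction hypothesis applies. Or $1,2,3$ are split among blocks, so the product's wavefront set lies in \eqref{eqn: wavefront set of three-fold linearization}, which meets $\Gamma$ only at $(\c{y},\c{\eta})$, where $h_k$ vanishes. Since $\Gamma$ is an entire bicharacteristic and $Q$ is causal, $Q$ cannot create a singularity on $\Gamma$. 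The bound $|\alpha|\le m-1$, which holds for every block once $k\ge2$, is what guarantees that only $h_2,\dots,h_{m-1}$ appear. The case analysis for $f_k$ then follows directly.

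\textbf{On your shrinking step.} It is not needed for the argument above. It would give a genuinely different and simpler route only if all of $K_{(1)}\cap K_{(2)}\cap K_{(3)}$ in the relevant region were placed inside $U$. Then $\Lambda_{(123)}$ never enters a source, no flowout arises, and $\cup_j N^\ast K_{(j)}\cup\bigcup_{j<k}N^\ast(K_{(j)}\cap K_{(k)})$ is a closed class. The phrase ``near $\Gamma$'' does not achieve this. Moreover, it is $s_0$, chosen after $r$, that localizes the intersection; shrinking $r$ works against you, because the $\gamma_{(j)}$ coalesce as $r\to0$.

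Finally, Lemma \ref{lemma: broken light-like geodesics} only concerns $\gamma_{(1)}$. Your claim that $\gamma_{\out}$ meets the wedges $\cup_j K_{(j)}$ only at $\c{y}$ needs a separate causality and no-cut-point argument.
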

To prove Proposition \ref{prop: popagation of regularity}, it is necessary to prove the following lemmas.

\begin{lemma}\label{lemma: first singularity analysis}
   Given that $u_{(\alpha)}$  in  \eqref{eqn: l-fold linearization} and  $\alpha \cap \{1,2,3\} = \emptyset$,  it holds that  $u_{(\alpha)}$ is smooth.
\end{lemma}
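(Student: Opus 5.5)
We argue by induction on $l = |\alpha|$, where $\alpha \subseteq \{4,\dots,m\}$.

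\emph{Base case $l=1$.} Here $u_{(\alpha)} = u_{(i)} = u_\lambda$ for some $i \ge 4$. By \eqref{eqn: notation of smooth wave} it solves $(\Box_g + h_1)u_\lambda = f_\lambda$. The source $f_\lambda$ in \eqref{eqn: construction of source of the Gaussian beam} lies in $C_c^\infty(\rotom)$, and the Cauchy data vanish. Smoothness of $u_\lambda$ then follows from standard well-posedness of the linear Cauchy problem on the globally hyperbolic manifold $(M,g)$: smooth compactly supported sources give smooth solutions. Equivalently, the wavefront set of the solution is contained in the forward bicharacteristic flowout of $\WF(f_\lambda)=\emptyset$, by propagation of singularities for $Q$.

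\emph{Inductive step.} Fix $\alpha$ with $|\alpha| = l \ge 2$ and $\alpha \cap \{1,2,3\} = \emptyset$, and assume the claim for all nonempty index sets of size $< l$ disjoint from $\{1,2,3\}$. Look at the right-hand side of \eqref{eqn: l-fold linearization}. Every partition $\{B_1,\dots,B_k\} \in \mathcal{Q}_k(\alpha)$ with $k \ge 2$ has nonempty blocks $B_j \subsetneq \alpha$. Each block therefore has size $< l$ and inherits $B_j \cap \{1,2,3\} = \emptyset$. Two small points need care here:
\begin{itemize}
\item the symbol $u_{(B_j)}$ in the product means the mixed derivative $\partial_{\e_{(B_j)}} u|_{\e=0}$, i.e.\ the lower-fold linearized wave, not a product of first-order waves;
\item repeated indices are harmless, since $f_{(j)} = f_\lambda$ for all $j \ge 4$ simply makes several of these waves coincide.
\end{itemize}
By the inductive hypothesis each $u_{(B_j)}$ is smooth. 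Hence each product $u_{(B_1)}\cdots u_{(B_k)}$ is smooth, and multiplying by $h_k \in C^\infty$ keeps it smooth. The sum over $k = 2,\dots,l$ is finite, so the full source $F_\alpha$ is in $C^\infty(M)$.

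It remains to pass smoothness from $F_\alpha$ to $u_{(\alpha)} = Q F_\alpha$. All $l$-fold linearizations have zero Cauchy data at $x^0 = 0$. By finite speed of propagation, $F_\alpha$ is supported in $J^+(\supp f_\lambda)$; restricted to the slab $(0,T)\times M'$ it has the regularity needed for the linear Cauchy theory. Applying smooth-data well-posedness once more, or equivalently the fact that $Q$ maps $C^\infty$ sources with vanishing past data to $C^\infty$ solutions, gives $u_{(\alpha)} \in C^\infty$. This closes the induction.

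\emph{Main obstacle.} The only delicate point is the meaning of ``smooth.'' The Gaussian beam $u_\lambda$ is smooth for each fixed $\lambda$, but its derivatives grow like powers of $\lambda$. The lemma is a statement about wavefront sets at fixed $\lambda$, and that is exactly what Proposition \ref{prop: popagation of regularity} needs, since it asks only for $\WF(f_k) \cap \Gamma = \emptyset$. So no uniformity in $\lambda$ has to be tracked, and the argument is purely qualitative.
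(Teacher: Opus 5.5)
Your proof is correct and follows the paper's argument: induction on $|\alpha|$, with base case $u_{(i)}=u_\lambda$ smooth, and an inductive step observing that every block of a partition in $\mathcal{Q}_k(\alpha)$ is a proper subset of $\alpha$ disjoint from $\{1,2,3\}$. By the inductive hypothesis the right-hand side of \eqref{eqn: l-fold linearization} is then smooth, hence so is $u_{(\alpha)}$. The only addition is that you spell out the passage from a smooth, past-compactly supported source to a smooth solution via linear Cauchy well-posedness, which the paper takes for granted.
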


\begin{proof}

 We prove this lemma  by induction on $|\alpha|$.  For  $|\alpha| = 1$, without loss of generality,  let  $\alpha = \{j\} \subseteq\{4,5,\dots,m\}$. From \eqref{eqn: notation of smooth wave}, $u_{(\alpha)} = u_{(j)}$ is smooth.

Assume that  $u_{(\beta)}$ is smooth  for all index sets $\beta\subseteq \{4,\dots,m\}$  with $|\beta| < |\alpha|$. Recall that $u_{(\alpha)}$ solves the equation \eqref{eqn: l-fold linearization}  with inhomogeneous terms of the form $ h_k u_{(B_1)}u_{(B_2)}\cdots u_{(B_k)}$ where $\emptyset \neq B_1,\dots,B_k \subsetneq \alpha$. By the induction hypothesis,  $u_{(B_1)},\dots,u_{(B_k)}$ is smooth for all $j$.  Therefore, the right-hand side of \eqref{eqn: l-fold linearization} is smooth, which implies that $u_{(\alpha)}$ is smooth as well.
\end{proof}

\begin{lemma}\label{lemma: second singularity analysis}

   Let $u_{(\alpha)}$ be as defined in \eqref{eqn: l-fold linearization}. If $\alpha \cap \{1,2,3\} = \{j\}$, then
   $$
     \WF(u_{(\alpha)}) \subseteq  N^{\ast}K_{(j)}.
   $$

\end{lemma}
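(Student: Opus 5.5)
Let $\alpha' := \alpha\setminus\{j\}\subseteq\{4,\dots,m\}$. We argue by induction on $|\alpha'|$, i.e. on $|\alpha|$ among index sets with $\alpha\cap\{1,2,3\}=\{j\}$. For $|\alpha|=1$ we have $u_{(\alpha)}=u_{(j)}$. By Proposition \ref{prop: linear wave}, $u_{(j)}\in I^{\mu}(N^\ast\{\c{x}_{(j)}\},N^\ast K_{(j)})$, and its wavefront set lies in $N^\ast K_{(j)}$ together with the source part over $\c{x}_{(j)}$. Since $\supp f_{(j)}\subset\rotom$ and we work away from the source points (equivalently, we view the point-source conormal directions as part of the closure of the flowout), we get $\WF(u_{(j)})\subseteq N^\ast K_{(j)}$. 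Here we use the convention, implicit in the paper, that $K_{(j)}$ includes its vertex.

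For the inductive step, write the right-hand side of \eqref{eqn: l-fold linearization} as a sum of terms $h_k u_{(B_1)}\cdots u_{(B_k)}$ over partitions of $\alpha$. Exactly one block, say $B_1$, contains $j$; the other blocks are subsets of $\{4,\dots,m\}$. By Lemma \ref{lemma: first singularity analysis}, $u_{(B_2)},\dots,u_{(B_k)}$ are smooth. Since $|B_1|<|\alpha|$ whenever $k\ge2$, the induction hypothesis gives $\WF(u_{(B_1)})\subseteq N^\ast K_{(j)}$. Multiplication by smooth functions does not enlarge wavefront sets, so
\[
\WF\Big(\big(\Box_g+h_1\big)u_{(\alpha)}\Big)\subseteq N^\ast K_{(j)}.
\]

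It remains to propagate. $u_{(\alpha)}$ solves the linear equation with zero Cauchy data and source $F$ satisfying $\WF(F)\subseteq N^\ast K_{(j)}$. By H\"ormander's propagation of singularities for the forward solution operator $Q$ (its kernel lies in $I^{-3/2}(N^\ast\Delta,\Lambda_g)$, cf.\ \cite{MU}), we have $\WF(QF)\subseteq\WF(F)\cup\Lambda_g'\circ(\WF(F)\cap\Char\Box_g)$. Now $N^\ast K_{(j)}$ is a conic Lagrangian manifold contained in $\Char(\Box_g)$ and invariant under the null bicharacteristic flow, being the flowout $\Lambda(\c{x}_{(j)},\cdot,s_0)$. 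Hence the forward flowout of $\WF(F)\cap\Char\Box_g$ stays inside $N^\ast K_{(j)}$, which gives $\WF(u_{(\alpha)})\subseteq N^\ast K_{(j)}$.

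The main obstacle is this last propagation step near the vertex $\c{x}_{(j)}$, where $N^\ast K_{(j)}$ fails to be smooth and meets $N^\ast\{\c{x}_{(j)}\}$. The wave-front bound for $Q$ there requires the paired-Lagrangian structure of $Q$. Alternatively, since the Gaussian beams vanish near the source region by causal independence up to smooth errors, one can restrict attention to $M\setminus\{\c{x}_{(j)}\}$ and use the closure of $N^\ast K_{(j)}$ as the target set. I would first try the latter: argue on $M\setminus\{\c{x}_{(j)}\}$ and treat the vertex separately via the statement of Proposition \ref{prop: linear wave}.
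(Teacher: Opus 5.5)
Your proof is correct and follows the paper's argument: induction on $|\alpha|$, the unique block containing $j$ controlled by the induction hypothesis, the other blocks smooth by Lemma \ref{lemma: first singularity analysis}, then propagation of singularities for $Q$ using $N^{\ast}K_{(j)}\subseteq\Char(\Box_g)$. The paper cites \cite[Proposition 2.3]{GU} for this last step, and your explicit use of the flow-invariance of $N^{\ast}K_{(j)}$ supplies the point that citation leaves implicit.

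The vertex is handled just as informally in the paper, where the base case ``follows directly'', and it is harmless because the lemma is only used near $\Gamma$. You should, however, drop your aside that the Gaussian beams vanish near the source region by causal independence. The beam $u_\lambda$ is launched from $\c{x}_{(1)}$ itself, and causal independence is imposed only among $f_{(1)},f_{(2)},f_{(3)}$. You do not need that aside anyway: multiplying by the smooth function $u_\lambda$ creates no new directions over $\c{x}_{(j)}$.
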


\begin{proof}

Let $j=1$ without loss of generality. We proceed by induction on $|\alpha|$. For $|\alpha|= 1$, it follows directly that $\WF\l( u_{(1)}\r) \subseteq N^{\ast}K_{(1)}$.

   Suppose  that $\WF(u_{(\beta)}) \subseteq N^{\ast}K_{(1)}$ for all index sets $\beta $ contained in $\{1,2,\dots, m\}$ such that $|\beta|<|\alpha|$  and  $\beta\cap\{1,2,3\} = \{1\}$. Recall that $u_{(\alpha)}$ is the solution of  \eqref{eqn: l-fold linearization}, that is, 
   $$
      (\Box_g+h_1) u_{(\alpha)} = -\sum_{k=2}^{\ell} k!\,h_k
     \sum_{\{B_1,\dots,B_k\}\in\mathcal Q_k(\alpha)}
      u_{(B_1)}\cdots u_{(B_k)}.
   $$
    Suppose that $1\in B_1$ without loss of generality. By the induction hypothesis and Lemma \ref{lemma: first singularity analysis},
   $$
   \WF\l(u_{(B_1)}\r) \subseteq N^{\ast}K_{(1)},\quad\text{and}\quad u_{(B_2)},\dots,u_{(B_k)}\in C^{\infty}(M).
   $$
   Hence, we have the inhomogeneous term  $\WF\l( u_{(B_1)}\cdots u_{(B_k)} \r)\subseteq N^{\ast}K_{(1)}$. Recall that $K_{(1)}$ is the light-cone emanating from $\c{x}_{(1)}$ and 
   $$
   N^{\ast}K_{(1)} = \{(x,\xi)\in T^{\ast}M \b 0; x\in K_{(1)},\xi \text{ is light-like}\}.
   $$
   Thus, we have $N^{\ast}K_{(1)} \subseteq \Char(\Box_g)$. It follows from \cite[Proposition 2.3]{GU} that
   $$
    \WF\l(u_{(\alpha)} \r) \subseteq N^{\ast}K_{(1)}.
   $$
   \end{proof}

\begin{lemma}\label{lemma: third singularity analysis}
   Let $u_{(\alpha)}$ be as defined in \eqref{eqn: l-fold linearization}. Suppose that $\alpha \cap \{1,2,3\} = \{j,k \}$. Then, the wavefront set of $u_{(\alpha)}$ satisfies
   $$
     \WF(u_{(\alpha)}) \subseteq N^{\ast}K_{(j)}\cup N^{\ast}K_{(k)} \cup N^{\ast}\l(K_{(j)}\cap K_{(k)} \r).
   $$
\end{lemma}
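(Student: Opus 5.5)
We argue by induction on $|\alpha|$, in the same way as Lemma \ref{lemma: first singularity analysis} and Lemma \ref{lemma: second singularity analysis}, and take $\{j,k\}=\{1,2\}$ without loss of generality. Set
$$
\Xi_{(12)} := N^{\ast}K_{(1)}\cup N^{\ast}K_{(2)}\cup N^{\ast}\l(K_{(1)}\cap K_{(2)}\r).
$$
The base case is $|\alpha|=2$, i.e. $\alpha=\{1,2\}$. Here $u_{(12)}$ solves \eqref{two-fold linearization}. By \cite[(57)]{KLU} or \cite[Theorem 8.2.10]{H1}, the source $-2h_2u_{(1)}u_{(2)}$ has wavefront set in $\Xi_{(12)}$. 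The set $N^{\ast}K_{(1)}\cup N^{\ast}K_{(2)}$ lies in $\Char(\Box_g)$, and its flowout is itself. The set $N^{\ast}(K_{(1)}\cap K_{(2)})$ consists of sums of two linearly independent light-like covectors, except on its intersection with $N^{\ast}K_{(1)}\cup N^{\ast}K_{(2)}$. By \cite[Lemma 2.7]{O} such sums are not light-like, so $\Char(\Box_g)$ meets $N^{\ast}(K_{(1)}\cap K_{(2)})$ only inside $N^{\ast}K_{(1)}\cup N^{\ast}K_{(2)}$. Propagation of singularities for $Q$ (for instance \cite[Proposition 2.3]{GU}, or the paired Lagrangian calculus of \cite{MU} as in \cite{KLU}) then gives
$$
\WF(u_{(12)})\subseteq \WF(\text{source})\cup \text{(bicharacteristic flowout of }\WF(\text{source})\cap\Char\Box_g)\subseteq \Xi_{(12)}.
$$

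For the induction step, assume the claim for all $\beta$ with $|\beta|<|\alpha|$ and $\beta\cap\{1,2,3\}=\{1,2\}$. Each term on the right-hand side of \eqref{eqn: l-fold linearization} has the form $h_k u_{(B_1)}\cdots u_{(B_k)}$, with the $B_i$ disjoint proper subsets of $\alpha$ and $3\notin\alpha$. There are two cases.
\begin{itemize}
\item[(a)] The indices $1,2$ lie in one block, say $B_1$. Then $\WF(u_{(B_1)})\subseteq\Xi_{(12)}$ by the induction hypothesis, and all other factors are smooth by Lemma \ref{lemma: first singularity analysis}.
\item[(b)] $1\in B_1$ and $2\in B_2$. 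Then $\WF(u_{(B_1)})\subseteq N^{\ast}K_{(1)}$ and $\WF(u_{(B_2)})\subseteq N^{\ast}K_{(2)}$ by Lemma \ref{lemma: second singularity analysis}, and the remaining factors are smooth. Since $K_{(1)},K_{(2)}$ intersect transversally near the relevant region, the product rule for wavefront sets places the product in $\Xi_{(12)}$.
\end{itemize}
In both cases the whole source has wavefront set in $\Xi_{(12)}$. The propagation argument of the base case then gives $\WF(u_{(\alpha)})\subseteq\Xi_{(12)}$.

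The main obstacle is case (a), where the product rule is applied to a factor whose wavefront set is already the union $\Xi_{(12)}$ rather than a single conormal bundle. I would treat it as follows. Multiplying by smooth functions does not enlarge the wavefront set. When several factors are singular, the possible sums of covectors are $\xi_1+\xi_2$ with $\xi_i\in N^{\ast}_xK_{(i)}\cup\{0\}$, and these lie in $N_x^{\ast}K_{(1)}+N_x^{\ast}K_{(2)}=N^{\ast}_x(K_{(1)}\cap K_{(2)})$. The set $\Xi_{(12)}$ is therefore closed under the covector additions allowed in \cite[Theorem 8.2.10]{H1}, provided no sum vanishes. Vanishing is excluded by transversality of $K_{(1)},K_{(2)}$, which fails only at the source points $\c{x}_{(j)}$; those are excluded by the causal independence of the sources.

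A second delicate point is the propagation step near $N^{\ast}K_{(1)}\cup N^{\ast}K_{(2)}$, where the source's wavefront set touches $\Char(\Box_g)$. There I would cite the flowout result \cite[Proposition 2.3]{GU}, as in Lemma \ref{lemma: second singularity analysis}: a source with wavefront set in a characteristic conormal bundle produces a solution with wavefront set in the same bundle.
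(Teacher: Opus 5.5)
Your proposal is correct and follows the paper's proof essentially verbatim. It uses induction on $|\alpha|$ with the same two cases for where the indices $1,2$ fall, the product rule \cite[Theorem 8.2.10]{H1}, the fact that sums of independent null covectors are not null, and \cite[Proposition 2.3]{GU} for propagation. The only differences are cosmetic: the paper settles the base case by citing \cite[Lemma 4.2]{KLOU} rather than rerunning the propagation argument, and your ``main obstacle'' in case (a) is not a real one, since there only $u_{(B_1)}$ is singular and every other factor is smooth.
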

\begin{proof}
 Let $j=1,k=2$ without loss of generality. We prove that 
 $$
  \WF\l( u_{(\alpha)}\r) \subseteq N^{\ast}K_{(1)}\cup N^{\ast}K_{(2)} \cup N^{\ast}\l( K_{(1)}\cap K_{(2)}\r),
 $$
 if $\alpha \cap \{1,2,3\} = \{1,2\}$.
 
 We proceed by induction on $|\alpha|$. First, if $|\alpha| =2$, then
 $$
 u_{(\alpha)} = u_{(12)} = -2Q\l( h_2 u_{(1)}u_{(2)}\r),
 $$
 where $Q$ is the solution operator of the linear wave equation with potential defined in \eqref{eqn: causal inverse}. Since $u_{(j)} \in I^{\mu}\l( N^{\ast}K_{(j)}\r)$ for $j=1,2$, by \cite[Lemma 4.2]{KLOU},
 $$
   \WF\l(Q\l( u_{(1)}u_{(2)}\r)\r) \subseteq N^{\ast}K_{(1)}\cup N^{\ast}K_{(2)} \cup N^{\ast}\l( K_{(1)}\cap K_{(2)}\r).
 $$

Next, suppose that for all index sets $\beta \subseteq \{1,2,\dots,m\}$ satisfying $|\beta|<|\alpha|$ and $\beta\cap\{1,2,3\} = \{1,2\}$, the following inclusion holds:
 $$
 \WF(u_{(\beta)}) \subseteq N^{\ast}K_{(1)}\cup N^{\ast}K_{(2)} \cup N^{\ast}\l( K_{(1)}\cap K_{(2)}\r).
 $$ 
 Recall that $u_{(\alpha)}$ solves equation \eqref{eqn: l-fold linearization}. Thus, $u_{(\alpha)}$ can be expressed as 
  \begin{equation}\label{eqn: u_alpha as a linear combination}
    u_{(\alpha)} =  -\sum_{k=2}^{l}\sum_{\{B_1,\dots,B_k\}\in\mathcal Q_k(\alpha)} k!\,Q\l(h_k     
      u_{(B_1)}\cdots u_{(B_k)}\r).
   \end{equation}
 The proof reduces to analyzing the two possible cases of the partition $\{B_1,\dots,B_{k}\}$.

    \textbf{Case 1.} $1,2 \in B_j$ for some $B_j\in \{B_1,\dots,B_k\}$. Assume that $j=1$ without loss of generality. By the induction hypothesis, 
   $$
   \WF\l(u_{(B_1)}\r) \subseteq N^{\ast}K_{(1)}\cup N^{\ast}K_{(2)} \cup N^{\ast}\l(K_{(1)}\cap K_{(2)} \r).
   $$
  Moreover, $u_{(B_2)},\dots,u_{(B_k)} \in C^{\infty}(M)$ by Lemma \ref{lemma: first singularity analysis}. Thus,
   $$
      \WF\l(   u_{(B_1)}u_{(B_2)}\cdots u_{(B_k)}\r) \subseteq N^{\ast}K_{(1)}\cup N^{\ast}K_{(2)} \cup   N^{\ast}\l( K_{(1)}\cap K_{(2)}\r).
   $$
   Note that $N^{\ast}K_{(1)}, N^{\ast}K_{(2)}\subseteq \Char(\Box_g)$ and 
   $$
  \l( N^{\ast}\l( K_{(1)}\cap K_{(2)}\r)\b \l(N^{\ast}K_{(1)} \cup N^{\ast}K_{(2)} \r)   \r) \cap \Char\l( \Box_g\r) = \varnothing,
   $$
  due to the fact that the sum of two linearly independent light-like covectors is not light-like \cite[Lemma 27, pp 141]{O}. By \cite[Proposition 2.3]{GU},
  $$
    \WF\l( Q\l(h_k     
      u_{(B_1)}\cdots u_{(B_k)}\r)\r) \subseteq N^{\ast}K_{(1)}\cup N^{\ast}K_{(2)} \cup   N^{\ast}\l( K_{(1)}\cap K_{(2)}\r).
  $$

  \textbf{Case 2.} There exists $j_1 \neq j_2$ such that $1\in B_{j_1}$ and $2\in B_{j_2}$. Suppose $1\in B_1$ and $2\in B_2$ without loss of generality. By Lemma \ref{lemma: first singularity analysis} and Lemma \ref{lemma: second singularity analysis},
  $$
   \WF\l(u_{(B_1)} \r)\subseteq N^{\ast}K_{(1)},  \WF\l(u_{(B_2)} \r)\subseteq N^{\ast}K_{(2)}\quad\text{and}\quad u_{(B_{j})} \in C^{\infty}(M), \quad3\le j\le k.
  $$
 Thus, we use the wavefront set calculus  \cite[Theorem 8.2.10]{H1} to derive
   $$
      \WF\l( u_{(B_1)}u_{(B_2)}\cdots u_{(B_k)}\r) \subseteq N^{\ast}K_{(1)}\cup N^{\ast}K_{(2)} \cup   N^{\ast}\l( K_{(1)}\cap K_{(2)}\r).
   $$
   As in the first case, we have
   $$
    \WF\l( Q\l(h_k     
      u_{(B_1)}\cdots u_{(B_k)}\r)\r) \subseteq N^{\ast}K_{(1)}\cup N^{\ast}K_{(2)} \cup   N^{\ast}\l( K_{(1)}\cap K_{(2)}\r).
  $$

 Finally, recall the expression of $u_{(\alpha)}$ in \eqref{eqn: u_alpha as a linear combination}. We conclude the proof by combining the two cases.

\end{proof}

\begin{lemma}\label{lemma: fourth singularity analysis}
 Let $u_{(\alpha)}$ be as defined in \eqref{eqn: l-fold linearization} and $\Gamma$ be as defined in \eqref{eqn: image of the bicharacteristics}. Given that $ \{1,2,3 \} \subseteq \alpha $ and the support condition \eqref{eqn: support condition} holds, 
   $$
      \Gamma \cap \WF\l(u_{(\alpha)}\r) = \emptyset.
   $$
\end{lemma}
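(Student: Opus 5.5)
The plan is an induction on $|\alpha|$, in the same spirit as Lemmas \ref{lemma: first singularity analysis}--\ref{lemma: third singularity analysis}. I read the statement for index sets with $\{1,2,3\}\subseteq\alpha$ and $|\alpha|\le m-1$. This is the only range in which it is needed: for Proposition \ref{prop: popagation of regularity} it is applied to blocks $B_i$ of partitions in $\mathcal Q_k(\{1,\dots,m\})$ with $k\ge 2$, so every block is a proper subset. For $\alpha=\{1,\dots,m\}$ the $h_m$-term genuinely produces a singularity on $\Gamma$, so the claim could not hold there.

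\emph{Step 1 (a uniform wavefront bound).} First I would strengthen the inductive hypothesis to the inclusion
$$\WF(u_{(\alpha)})\subseteq \mathcal K:=\Lambda^{g}_{(123)}\cup\bigcup_{1\le j<k\le 3}N^{\ast}(K_{(j)}\cap K_{(k)})\cup\bigcup_{j=1}^3 N^{\ast}K_{(j)},$$
valid for every $\alpha\supseteq\{1,2,3\}$. Here $\Lambda^{g}_{(123)}$ denotes the forward flowout of all of $N^{\ast}(K_{(1)}\cap K_{(2)}\cap K_{(3)})$, not only of $(\c{y},\c{\eta})$.
\begin{itemize}
\item Write $u_{(\alpha)}$ as in \eqref{eqn: u_alpha as a linear combination}. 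In each product $u_{(B_1)}\cdots u_{(B_k)}$ the indices $1,2,3$ are distributed among the blocks in one of three ways: all in one block, two in one block and one in another, or one in each of three blocks.
\item Lemmas \ref{lemma: first singularity analysis}--\ref{lemma: third singularity analysis} and the induction hypothesis control each factor.
\item \cite[Theorem 8.2.10]{H1} then places the wavefront set of the product inside \eqref{eqn: wavefront set of three-fold linearization} together with $\Lambda^g_{(123)}$. The latter only occurs when one block contains $\{1,2,3\}$ and the other factors are smooth.
\item Applying $Q$ and arguing as in \cite[Proposition 2.3]{GU} and \cite[Lemma 4.2]{KLOU} adds only the flowout of the characteristic part. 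That part is $\Lambda_{(123)}\cap\Char(\Box_g)$, since $N^{\ast}(K_{(j)}\cap K_{(k)})\setminus(N^{\ast}K_{(j)}\cup N^{\ast}K_{(k)})$ is non-characteristic.
\end{itemize}

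\emph{Step 2 (the sources vanish near $\c{y}$).} By \eqref{eqn: support condition} there is a neighbourhood $U$ of $\c{y}$ on which $h_2,\dots,h_{m-1}$ vanish. Since $|\alpha|\le m-1$, every source term $h_k\,u_{(B_1)}\cdots u_{(B_k)}$ on the right of \eqref{eqn: l-fold linearization} has $2\le k\le m-1$, and hence vanishes identically on $U$.

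\emph{Step 3 (conclusion by propagation).} Now suppose $\beta_{\out}(s_\ast)\in\WF(u_{(\alpha)})$ for some $s_\ast$. Because $\Box_g+h_1$ has real principal symbol, zero Cauchy data, and $(M,g)$ is globally hyperbolic, propagation of singularities pushes this point backward along $\Gamma$. It must therefore meet the wavefront set of the source at some earlier parameter. By Step 1 that wavefront set lies in $\mathcal K$. By \eqref{eqn: first wavefront set property}--\eqref{eqn: second wavefront set property}, $\Gamma$ misses all the $N^{\ast}K_{(j)}$ and $N^{\ast}(K_{(j)}\cap K_{(k)})$, and meets $\Lambda_{(123)}$ only at $(\c{y},\c{\eta})$. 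That point lies over $U$, where the source vanishes by Step 2. Hence the source is microlocally smooth on $\Gamma$, and so $\Gamma\cap\WF(u_{(\alpha)})=\emptyset$.

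The main obstacle is the flowout piece of Step 1: I must rule out that $\Gamma$ meets $\Lambda^{g}_{(123)}$ through the flowout from a point of the curve $K_{(1)}\cap K_{(2)}\cap K_{(3)}$ other than $\c{y}$. That would mean a null geodesic through such a point hitting $\gamma_{\out}$ with the same direction. I would exclude this by the standing assumption of no cut points along null geodesics in $\D$, together with taking $r$ and $s_0$ small, following the corresponding argument in \cite{KLU,LUW}. Concretely, $\gamma_{\out}$ would then have to meet $K_{(1)}\cap K_{(2)}\cap K_{(3)}$ a second time, which is impossible before its first cut point for the small cones chosen.
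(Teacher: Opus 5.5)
Your restriction to $|\alpha|\le m-1$ is correct and is in fact needed. The paper's argument uses ``$h_k\equiv 0$ near $\c{y}$'' for every $k$ in the sum, and this fails for $k=m$ when $\alpha=\{1,\dots,m\}$; the lemma is only ever applied to proper blocks.

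Step 3, however, has a genuine gap. Your bound $\mathcal K$ contains $\Lambda^{g}_{(123)}$. Since $(\c{y},\c{\eta})\in\Lambda_{(123)}\cap\Char(\Box_g)$ and $\beta_{\out}$ is its bicharacteristic, $\Lambda^{g}_{(123)}$ contains the whole forward half $\{\beta_{\out}(s): s\ge 0\}$ of $\Gamma$. This is exactly where the singularity of $u_{(12\cdots m)}$ at $(\c{z},\c{\zeta})$ lives. So knowing that the source's wavefront set lies in $\mathcal K$ gives no information at the points $\beta_{\out}(s)$ with $s>0$ and $\gamma_{\out}(s)\notin U$. Your observations that $\Gamma$ meets $\Lambda_{(123)}$ only at $(\c{y},\c{\eta})$ and that the sources vanish on $U$ only dispose of a neighbourhood of $s=0$.

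The problem already appears at $|\alpha|=4$. Take the term $2h_2\,u_{(123)}u_{(4)}$ with $B_1=\{1,2,3\}$. Here $h_2$ vanishes near $\c{y}$ but need not vanish near $\gamma_{\out}(s)$ for larger $s$. Smoothness of this source along $\Gamma$ therefore requires $\Gamma\cap\WF(u_{(123)})=\emptyset$. That is the statement of the lemma itself for the proper subset $B_1$, not the inclusion in $\mathcal K$.

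This is how the paper closes the argument in its Case 1. For a block $B_1\supseteq\{1,2,3\}$ it invokes the inductive hypothesis $\Gamma\cap\WF(u_{(B_1)})=\emptyset$, with the other factors smooth by Lemma \ref{lemma: first singularity analysis}. For the remaining partitions, the source's wavefront set lies in \eqref{eqn: wavefront set of three-fold linearization}, which meets $\Gamma$ only at $(\c{y},\c{\eta})$, where $h_k$ vanishes. If you induct directly on the conclusion $\Gamma\cap\WF(u_{(\beta)})=\emptyset$, Step 1 becomes unnecessary.

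Your ``main obstacle'' is also misdirected. A bicharacteristic that meets $\Gamma$ is $\Gamma$ itself. So by \eqref{eqn: first wavefront set property}, the flowout of a point of $\Lambda_{(123)}\cap\Char(\Box_g)$ not on the ray through $(\c{y},\c{\eta})$ never reaches $\Gamma$. No cut-point hypothesis or smallness of $r,s_0$ is needed for this. Ruling it out would not help in any case, because the troublesome part of $\Gamma\cap\Lambda^{g}_{(123)}$ is the flowout of $(\c{y},\c{\eta})$ itself.
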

\begin{proof}
We proceed by induction on $|\alpha|$. First, consider the base case $|\alpha| = 3$. In this case, $u_{(\alpha)} = u_{(123)}$ solves the wave equation \eqref{three-fold linearization}. The wavefront set of the inhomogeneous terms
 $h_3u_{(1)}u_{(2)}u_{(3)}$ and $h_{2}u_{(ij)}u_{(s)}$, where $\{i,j,s\} = \{1,2,3\}$, is described in \eqref{eqn: wavefront set of three-fold linearization}. In particular, we have
$$
\Gamma \cap \WF\l(u_{(1)}u_{(2)}u_{(3)} \r) = \{(\c{y},\c{\eta})\},\quad\text{and}\quad \Gamma \cap \WF\l(u_{(ij)}u_{(s)} \r) = \{(\c{y},\c{\eta})\}.
$$
Since $h_2,h_3 \equiv 0$ in a small neighborhood of $\c{y}$ by the support condition \eqref{eqn: support condition}, $\Gamma$ does not intersect with the wavefront sets of $h_3 u_{(1)}u_{(2)}u_{(3)}$ and $h_2u_{(ij)}u_{(s)}$. By the theorem of propagation of singularities \cite[Theorem 6.1.1]{DH}, it follows that
$\Gamma \cap \WF\l(u_{(123)}\r) = \emptyset$.

Now assume the lemma holds for all $\beta\subseteq\{1,2,\dots,m\}$ with $|\beta|<|\alpha|$. By \eqref{eqn: l-fold linearization}, the wave $u_{(\alpha)}$ is expressed as 
 $$
    u_{(\alpha)} =  -\sum_{k=2}^{l}\sum_{\{B_1,\dots,B_k\}\in\mathcal Q_k(\alpha)} k!\,Q\l(h_k     
      u_{(B_1)}\cdots u_{(B_k)}\r).
 $$
   It suffices to prove that
   \begin{equation}\label{eqn: property of wave front set with higher order nonlinearity}
       \Gamma \cap \WF\l(Q\l(h_k     
      u_{(B_1)}\cdots u_{(B_k)}\r) \r) = \emptyset.
   \end{equation}

 The proof reduces to three cases of the partition $\{B_1,\dots,B_{k}\}$.

 \textbf{Case 1.} Suppose that $\{1,2,3\}\subseteq B_{j}$ for some $B_j \in \{B_1,\dots,B_k\}$. Without loss of generality, assume that $\{1,2,3\}\subseteq B_1$. By induction hypothesis,
$$
   \Gamma \cap \WF\l(u_{(B_1)}\r) = \emptyset.
$$
Furthermore, $u_{(B_{j})} \in C^{\infty}(M)$ for $2\le j \le k$ by Lemma \ref{lemma: first singularity analysis}. Thus, \eqref{eqn: property of wave front set with higher order nonlinearity} holds for the first case by the theorem of propagation of singularities in \cite[Theorem 23.2.9]{H3}.

\textbf{Case 2.} Suppose that there exists three disjoint index sets $B_{j_1},B_{j_2},B_{j_3}$ such that $1\in B_{j_1}, 2\in B_{j_2}, 3\in B_{j_3}$. Without loss of generality, assume that $1\in B_{1}, 2\in B_2$ and $3\in B_3$.

Then, by Lemma \ref{lemma: first singularity analysis} and \ref{lemma: second singularity analysis},
$$
  \WF\l( u_{(B_j)}\r) \subseteq N^{\ast}K_{
  (j)},\quad j=1,2,3,\quad\text{and}\quad u_{(B_{l})}\in C^{\infty}(M),\quad 4\le l\le k.
$$
Applying the wavefront set calculus  \cite[Theorem 8.2.10]{H1} yields
\begin{multline}\label{eqn: three product of the wave front set}
 \WF\l(u_{(B_1)}u_{(B_2)}\cdots u_{(B_k)} \r)   \\
  \subseteq \bigcup_{j=1}^{3} N^{\ast}K_{(j)} \cup \bigcup_{1\le j<k\le 3}
N^{\ast}\l( K_{(j)}\cap K_{(k)}\r) \cup N^{\ast}\l(\bigcap_{l=1}^{3} K_{(l)} \r).
\end{multline}
Hence, by \eqref{eqn: first wavefront set property} and \eqref{eqn: second wavefront set property},
$$
  \WF\l(u_{(B_1)}u_{(B_2)}\cdots u_{(B_k)} \r) \cap \Gamma = \{(\c{y},\c{\eta})\}.
$$
Since $h_k \equiv 0$ in a small neighborhood of $\c{y}$, it follows  that $\Gamma$ does not intersect with the wavefront set of $h_k u_{(B_1)}u_{(B_2)}\cdots u_{(B_k)}$. Thus, \eqref{eqn: property of wave front set with higher order nonlinearity} also holds in this case.

\textbf{Case 3.} Suppose there exist $B_{j_1}, B_{j_2}$ such that $p,q\in B_{j_1}$ and $l\in B_{j_2}$, where $(p,q,l)$ forms a permutation of $(1,2,3)$.

Without loss of generality, assume that $1,2\in B_1$ and $3\in B_{2}$. By Lemma \ref{lemma: third singularity analysis}, we have
$$
\WF\l(u_{(B_1)}\r) \subseteq N^{\ast}K_{(1)}\cup N^{\ast}K_{(2)} \cup N^{\ast}\l(K_{(1)} \cap K_{(2)} \r).
$$
In addition, by Lemma \ref{lemma: first singularity analysis} and Lemma \ref{lemma: second singularity analysis}, it follows that
$$
  \WF\l(u_{(B_2)}\r) \subseteq N^{\ast}K_{(3)},\quad\text{and}\quad u_{(B_l)} \in C^{\infty}(M), \quad 3\le l\le k.
$$
Therefore, the inclusion \eqref{eqn: three product of the wave front set} also holds by the wavefront set calculus. Thus, the property \eqref{eqn: property of wave front set with higher order nonlinearity} is verified as well analogous to Case 2.
\end{proof}

\begin{proof}[Proof of Proposition \ref{prop: popagation of regularity}]
    By \eqref{definition of the m-linearized source}, $f_{k}$ is expressed as
    $$
        f_{k} =   \sum_{\{B_1,\dots,B_k\}\in\mathcal Q_k(\{1,2,\dots,m\})}-k!h_k
      u_{(B_1)}\cdots u_{(B_k)}, \quad 2\le k \le m-1,
    $$
    where $h_j \equiv 0$ in a small neighborhood of $\c{y}$. It reduces to prove that for all $ 2\le k\le m-1$ and $ \{B_1,\dots,B_k\}\in \mathcal{Q}_{k}(\{1,2,\dots,m\})$,
    \begin{equation}\label{eqn: propagation of regularity for lower order}
        \Gamma \cap \WF\l(  h_k u_{(B_1)} u_{(B_2)}\cdots u_{(B_k)} \r) = \emptyset,\quad
    \end{equation}

\textbf{Case 1.}    Suppose there exists $B_{j}$ for some $1\le j \le k$  such that $1,2,3\in B_j$. Without loss of generality, assume that $1,2,3\in B_{1}$. By Lemma \ref{lemma: fourth singularity analysis}, we have $\WF\l(u_{(B_1)}\r) \cap \Gamma = \emptyset$. Moreover, $u_{(B_2)},\dots,u_{(B_k)}\in C^{\infty}$ in this case. Thus, we have proved \eqref{eqn: propagation of regularity for lower order} for the first case.

\textbf{Case 2.} Suppose there exist $B_{j_1},B_{j_2}$ for $1\le j_1 < j_2\le k$ such that $p,q\in B_{j_1}$ and $l\in B_{j_2}$, where $\{p,q,l\} = \{1,2,3\}$. Without loss of generality, we assume that $1,2 \in B_{1}$ and $3\in B_2$.

By Lemma \ref{lemma: third singularity analysis}, we have
$$
\WF\l( u_{(B_1)}\r) \subseteq N^{\ast}K_{(1)}\cup N^{\ast}K_{(2)} \cup N^{\ast}\l(K_{(1)}\cap K_{(2)} \r).
$$
Moreover, by Lemma \ref{lemma: first singularity analysis} and Lemma \ref{lemma: second singularity analysis}, we have
$$
 \WF\l(u_{(B_2)}\r) \subseteq N^{\ast}K_{(3)},\quad u_{(B_3)},\dots,u_{(B_k)}\in C^{\infty}.
$$
Applying the wavefront set calculus gives that
$$
 \WF\l(u_{(B_1)}u_{(B_2)}\cdots u_{(B_k)} \r)   
  \subseteq \bigcup_{j=1}^{3} N^{\ast}K_{(j)} \cup \bigcup_{1\le j<k\le 3}
N^{\ast}\l( K_{(j)}\cap K_{(k)}\r) \cup N^{\ast}\l(\bigcap_{l=1}^{3} K_{(l)} \r).
$$
By \eqref{eqn: first wavefront set property} and \eqref{eqn: second wavefront set property}, we have
$$
 \WF\l(u_{(B_1)}u_{(B_2)}\cdots u_{(B_k)} \r) \cap \Gamma = \{(\c{y},\c{\eta})\}.
$$
Since $h_k\equiv 0$ in a neighborhood of $\c{y}$, we have proved \eqref{eqn: propagation of regularity for lower order} for the second case.
    
\textbf{Case 3.} Suppose there exist $B_{j_1} ,B_{j_2},B_{j_3}$ for $1\le j_1<j_2<j_3 \le k$ such that $1\in B_{j_1}, 2\in B_{j_2}$ and $3\in B_{j_3}$.  Without loss of generality, we assume that $1\in B_1,2\in B_2$ and $3\in B_3$. By Lemma \ref{lemma: first singularity analysis} and Lemma \ref{lemma: second singularity analysis}, we have
$$
\WF\l(u_{(B_j)}\r) \subseteq N^{\ast}K_{(j)},\quad j=1,2,3,\quad\text{and}\quad u_{(B_4)},\dots, u_{(B_k)}\in C^{\infty}.
$$
According to the wavefront set calculus, we have
$$
 \WF\l(u_{(B_1)}u_{(B_2)}\cdots u_{(B_k)} \r)   
  \subseteq \bigcup_{j=1}^{3} N^{\ast}K_{(j)} \cup \bigcup_{1\le j<k\le 3}
N^{\ast}\l( K_{(j)}\cap K_{(k)}\r) \cup N^{\ast}\l(\bigcap_{l=1}^{3} K_{(l)} \r).
$$
Then, we have proved \eqref{eqn: propagation of regularity for lower order} by \eqref{eqn: first wavefront set property}, \eqref{eqn: second wavefront set property} and the fact that $h_k\equiv 0$ in a neighborhood of $\c{y}$.

\end{proof}

\begin{proof}[Proof of Proposition \ref{prop: uniqueness of $V$ via higher order nonlinearity}]
We first decompose $\D \b \rotom$ as 
$$
 \D \b \rotom = \bigcup_{j=2}^{\infty} S_{j},
$$
where $S_j$ are pairwisely non-overlapping sets defined by
\begin{align*}
  S_3 &:= \supp(h_3) ,\\
  S_2 &:= \supp(h_2) \b S_3,\\
  S_j & := \supp(h_j) \b \l( S_2\cup\cdots \cup S_{j-1}\r),\quad j\ge4.
\end{align*}
Since $h_1$ has been determined on $S_2$ and $S_3$ in Proposition \ref{prop : 3 wave}, it remains to determine $h_1$ on $h_j$ for $j\ge 4$.

We claim that, we have
\begin{equation}\label{eqn: coincidence of support from source-to-solution map}
L_{N^{(1)}} = L_{N^{(2)}} \Longrightarrow S_{j}^{(1)} = S_{j}^{(2)},\quad j\ge2.
\end{equation}
We prove \eqref{eqn: coincidence of support from source-to-solution map} by induction. By Proposition \ref{prop : 3 wave}, 
$$
 S_{2}^{(1)} = S_{2}^{(2)},\quad\text{and}\quad S_{3}^{(1)} = S_{3}^{(2)}.
$$
We assume that $S_{j}^{(1)} = S_{j}^{(2)}$ for $j=2,\dots,m-1$ and we prove that $S_{m}^{(1)} = S_{m}^{(2)}$.

Suppose that $\c{y}\in S_{m}^{(1)}$ and $\c{y}\notin S_{m}^{(2)}$. For any $\c{y}\in S_{m}^{(1)}$, the $m-$fold linearization \eqref{m-fold linearization} implies
\begin{align*}
\l(\Box_g + h^{(1)}_1\r)u^{(1)}_{(12\cdots m)} &=  - m!h^{(1)}_{m}u^{(1)}_{(1)}u^{(1)}_{(2)}u^{(1)}_{(3)}\l(u_{\lambda}^{(1)}\r)^{m-3}+\sum_{k=2}^{m-1}
  f^{(1)}_{k},\\
  \l(\Box_g + h^{(2)}_1\r)u^{(2)}_{(12\cdots m)} &=  - m!h^{(2)}_{m}u^{(2)}_{(1)}u^{(2)}_{(2)}u^{(2)}_{(3)}\l(u_{\lambda}^{(2)}\r)^{m-3}+\sum_{k=2}^{m-1}
  f^{(2)}_{k}.
\end{align*}
By hypothesis induction, we have $h_j^{(1)}\equiv h_{j}^{(2)}\equiv 0$ in a neighborhood of $\c{y}$. Thus, $\WF\l(f_{k}^{(j)}\r) \cap \Gamma = \varnothing$, where $\Gamma$ is defined by \eqref{eqn: image of the bicharacteristics}, for $j=1,2$ and $2\le k\le m-1$ by Proposition \ref{prop: popagation of regularity}.

Since $\c{y}\in S_{m}^{(1)}$, it holds that for the microlocal cut-off $\chi$ in \eqref{eqn: microlocal cut-off} whose microlocal support lies in a conic neighborhood of $(\c{y},\c{\eta})$,
$$
\chi\l(h^{(1)}_{m}u^{(1)}_{(1)}u^{(1)}_{(2)}u^{(1)}_{(3)}\l(u_{\lambda}^{(1)}\r)^{m-3}\r) \in I^{3\mu+2}\l(\Lambda_{(123)}\r).
$$
The coincidence of the source-to-solution map gives that
$$
  u_{(12\cdots m)}^{(1)} = u_{(12\cdots m)}^{(2)}. 
$$
Thus,
$$
\chi\l(h^{(2)}_{m}u^{(2)}_{(1)}u^{(2)}_{(2)}u^{(2)}_{(3)}\l(u_{\lambda}^{(2)}\r)^{m-3}\r) \in I^{3\mu+2}\l(\Lambda_{(123)}\r).
$$
Thus, $\c{y} \in S_{m}^{(2)}$. We have proved that $S_{m}^{(1)}\subseteq S_{m}^{(2)}$.

Analogously, it holds that $S_{m}^{(2)}\subseteq S_{m}^{(1)}$. Then, we have proved \eqref{eqn: coincidence of support from source-to-solution map}.

We have proved the uniqueness of $h_1$ on $S_2$ and $S_3$ in Proposition \ref{prop : 3 wave}. It reduces to prove the uniqueness of $h_1$ on $S_m$ for $m\ge 4$. 
For any $\c{y} \in \D\b \rotom$, there exists a unique $S_m$ for $m\ge 2$ such that $\c{y}\in S_{m}$.

\textbf{Case 1.} $\c{y}\in S_{m}$ and $h_m(\c{y})\neq0$. Recall $u_{(12\cdots m)}$ denotes the solution of the $m$-fold linearized wave equation \eqref{m-fold linearization}. We decompose the solution as
$u_{(12\cdots m)} = v_{(12\cdots m)} + w_{(12\cdots m)}$, 
where the components respectively satisfy
\begin{align*}
   \l(\Box_g+h_1 \r)v_{(12\cdots m)} &= -m! h_m u_{(1)}u_{(2)}u_{(3)}u_{\lambda}^{m-3} \\
   \l(\Box_g+h_1 \r)w_{(12\cdots m)} & =  f_2+\cdots +f_{m-1}.
\end{align*}

Since $u_{\lambda}\in C^{\infty}(M)$, the microlocal structure of $v_{(12\cdots m)}$ and $\mathcal{V}_{(123)}$ in \eqref{higher-order three-fold linearization} remains the same near the bicharacteristic from $(\c{y},\c{\eta})$ to $(\c{z},\c{\zeta})$. More precisely, by Proposition \ref{prop: principal symbol of three wave interaction}, 
$$
  v_{(12\cdots m)} \in I^{3\mu +\frac{1}{2}}\l(\Lambda_{(123)},\Lambda_{(123)}^{g} \r) \quad\text{away from } \cup_{j=1}^{3}N^{\ast}K_{(j)}.
$$
Analogous to \eqref{expression for the principal symbol of three-fold linearization}, the principal symbol of $ v_{(12\cdots m)}$ at $(\c{z},\c{\zeta})$ is
\begin{multline}\label{eqn: principal symbol of the m-fold linearization}
  \sigma\l[v_{(12\cdots m)}\r](\c{z},\c{\zeta}) = -m!\sigma[Q](\c{z},\c{\zeta},\c{y},\c{\eta}) \sigma\l[ h_mu_{(1)}u_{(2)}u_{(3)}u_{\lambda}^{m-3}\r](\c{y},\c{\eta})\\
  = -m!(2\pi)^{-2} h_{m}(\c{y}) u_{\lambda}^{m-3}(\c{y})\sigma[Q](\c{z},\c{\zeta},\c{y},\c{\eta}) \prod_{j=1}^{3}\sigma\l[u_{(j)}\r](\c{y},r^{-2}\kappa_{(j)}\c{\xi}_{(j)}).
\end{multline}

Moreover, by Proposition \ref{prop: popagation of regularity} and the theorem of propagation of singularities \cite[Theorem 6.1.1]{DH}, we have
$$
  (\c{z},\c{\zeta})\notin \WF\l(w_{(12\cdots m)}\r).
$$
Thus, the principal symbol coincides:
\begin{align*}
 \sigma\l[ u_{(12\cdots m)} \r](\c{z},\c{\zeta})  = \sigma\l[v_{(12\cdots m)} \r](\c{z},\c{\zeta}).
\end{align*}
Since $u_{(12\cdots m)}$ is uniquely determined by the source-to-solution map, it follows that for two independent measurement $L_{N^{(1)}}$ and $L_{N^{(2)}}$,
$$
  \sigma\l[v_{(12\cdots m)}^{(1)} \r](\c{z},\c{\zeta}) =   \sigma\l[v_{(12\cdots m)}^{(2)} \r](\c{z},\c{\zeta}).
$$
Substituting into the expression for the principal symbols for $v_{(12\cdots m)}^{(j)}$, and noting that $u_{\lambda}$ depends on $h_1$, we obtain
\begin{equation}\label{eqn: equation of nonlinearity times Gaussian beam}
   h^{(1)}_{m}(\c{y}) \l(u_{\lambda}^{(1)}(\c{y})\r)^{m-3} =  h^{(2)}_{m}(\c{y}) \l(u_{\lambda}^{(2)}(\c{y})\r)^{m-3} .
\end{equation}
Using the Taylor expansion \eqref{eqn: expansion of Gaussian beams at y}, for $j=1,2$, we have
$$
u^{(j)}_{\lambda}(\c{y}) = 1+O(\lambda^{-1}) \quad\text{as } \lambda\to+\infty.
$$ 
Taking the limit as $\lambda \to +\infty$ , we deduce that
$$
   h^{(1)}_{m}(\c{y}) = h^{(2)}_{m}(\c{y}).
$$
Since $h_{m}(\c{y})\neq 0$ by assumption, equation \eqref{eqn: equation of nonlinearity times Gaussian beam} further implies 
$$
\left(u_{\lambda}^{(1)}(\c{y})\right)^{m-3} = \left(u^{(2)}_{\lambda}(\c{y})\right)^{m-3}.
$$ 
Applying again the Taylor expansion of $u_{\lambda}(\c{y})$ from \eqref{eqn: expansion of Gaussian beams at y}, we obtain 
\begin{multline*}
   \left(1+\lambda^{-1}\left(b_{1,0}(\c{y})-\frac{\imath}{2}\int_{0}^{s_{\into}}h_1^{(1)}(\gamma_{(1)}(s))ds\right) + O(\lambda^{-2})\right)^{m-3} \\
   =\left(1+\lambda^{-1}\left(b_{1,0}(\c{y})-\frac{\imath}{2}\int_{0}^{s_{\into}} h_1^{(2)}(\gamma_{(1)}(s))ds \right)+ O(\lambda^{-2})\right)^{m-3},
\end{multline*}
where $\gamma_{(1)}$ is the light-like geodesic connecting $\c{x}_{(1)}$ and $\c{y}$. 
Thus, we have
$$
\int_{0}^{s_{\into}}h_1^{(1)}(\gamma_{(1)}(s))ds + O(\lambda^{-1})  = \int_{0}^{s_{\into}} h_1^{(2)}(\gamma_{(1)}(s))ds + O(\lambda^{-1}).
$$
Letting $\lambda \to +\infty$, we conclude that the truncated integral of the potential along the light-like geodesic from $\c{x}_{(1)}$ to $\c{y}$ is uniquely determined by the source-to-solution map, that is,
$$
 \int_{0}^{s_{\into}}h_1^{(1)}(\gamma_{(1)}(s))ds = \int_{0}^{s_{\into}} h_1^{(2)}(\gamma_{(1)}(s))ds,\quad\text{where  } \gamma_{(1)}(s_{\into}) = \c{y}.
$$
Differentiating both sides at $s = s_{\into}$, where $\gamma_{(1)}(s_{\into}) = \c{y}$, yields
$$
  h_1^{(1)}(\c{y}) = h_1^{(2)}(\c{y}).
$$

\textbf{Case 2.} $\c{y}\in S_m$ and $h_m(\c{y})=0$. Select a sequence $\{y_n\}\subseteq \{h_m\neq 0\}\b\cup_{j=2}^{m-1}\supp(h_j)$ such that $y_n\to \c{y}$ as $n\to\infty$. By the result established in Case 1, we have $h_1^{(1)}(y_n) = h_1^{(2)}(y_n)$ for all $n$. Taking the limit as $n\to \infty$ and using the continuity of $h_1^{(1)}$ and $h_1^{(2)}$, we conclude that
$h_1^{(1)}(\c{y}) = h_1^{(2)}(\c{y})$.
\end{proof}

\begin{proposition}
    $$
         h^{(1)}_1\equiv h^{(2)}_1\quad\text{on }\mathbb{D}.
    $$
\end{proposition}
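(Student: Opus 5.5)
The plan is to assemble the pointwise identifications already obtained into a statement on all of $\D$, by splitting $\D$ according to where the Taylor coefficients of $N$ vanish. Three regions arise: the observation region $\rotom$ itself, the set $\D\b\rotom$ decomposed as $\bigcup_{j\ge 2} S_j$ (as in the proof of Proposition \ref{prop: uniqueness of $V$ via higher order nonlinearity}), and the points of $\D\b\rotom$ lying in no $S_j$.

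\textbf{On $\rotom$.} Here the hypothesis of Theorem \ref{thm : simplified theorem} gives $h_m^{(1)}=h_m^{(2)}$ in $\mho$ for every $m$, so $h_1^{(1)}=h_1^{(2)}$ there. As a backup one could also read $h_1$ off directly: apply $\Box_g$ to the known solutions $u_{(1)}$ of \eqref{one-fold linearization} in $\rotom$ and divide by the nonvanishing $u_{(1)}$ away from the source.

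\textbf{On $\D\b\rotom$.} By condition (ii), $N(x,\cdot)$ is not linear for any $x$. Hence for every $\c{y}$ some $h_m(\c{y})\neq 0$ with $m\ge 2$, and therefore $\c{y}\in\bigcup_{j\ge2}\supp h_j=\bigcup_{j\ge2}S_j$.

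\begin{itemize}
\item If $\c{y}\in S_3=\supp h_3$, the claim is \eqref{eqn: Recovery of the potential via cubic nonliearities}.
\item If $\c{y}\in S_2$, it is \eqref{eqn: Recovery of the potential via quadratic nonliearities}.
\item If $\c{y}\in S_m$ with $m\ge4$, then $\c{y}\notin\bigcup_{j=2}^{m-1}\supp h_j$, which is exactly the support condition \eqref{eqn: support condition}. By \eqref{eqn: coincidence of support from source-to-solution map}, this condition holds simultaneously for $k=1,2$. Proposition \ref{prop: uniqueness of $V$ via higher order nonlinearity} then applies, covering both Case 1 and the limiting Case 2.
\end{itemize}

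Since the $S_j$ exhaust $\D\b\rotom$, this gives equality on $\D\b\rotom$, and hence on all of $\D$.

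\textbf{Main obstacle: points outside every $S_j$.} The delicate step is the sets $S_j$, which are defined through closed supports with earlier ones removed. I must check that every $\c{y}$ in $\D\b\rotom$ lies in some $S_m$ with the required neighbourhood vanishing of $h_2,\dots,h_{m-1}$. This holds because the excluded set $\bigcup_{j<m}\supp h_j$ is closed for each finite $m$, so its complement is open. The remaining worry is a point lying in $\bigcup_j\supp h_j$ only as a limit, belonging to no single $S_j$. The first attempt at such points is to argue that the set where $h_1^{(1)}=h_1^{(2)}$ is closed and contains the dense set $\bigcup_j S_j$, and conclude by continuity of $h_1^{(k)}\in C^\infty$. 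Finally, the boundary of $\rotom$ inside $\D$ is handled by the same continuity argument.
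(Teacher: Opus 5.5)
Your proposal is correct and is essentially the paper's proof. The paper combines the two $h_1$-identities of Proposition~\ref{prop : 3 wave} (on $\supp h_3$ and on $\supp h_2\b\supp h_3$) with Proposition~\ref{prop: uniqueness of $V$ via higher order nonlinearity} over the decomposition $\D\b\rotom=\bigcup_{j\ge2}S_j$. Your explicit treatment of $\rotom$ (via the hypothesis $h_1^{(1)}=h_1^{(2)}$ in $\mho$) and your use of condition (ii) to show that the $S_j$ exhaust $\D\b\rotom$ fill in details the paper leaves implicit. That same observation makes your ``main obstacle'' paragraph moot: every point already lies in some $\supp h_m$, hence in some $S_j$, so no limit or density argument is needed.
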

\begin{proof}
  This proposition follows from the combination of \eqref{eqn: Recovery of the potential via cubic nonliearities}, \eqref{eqn: Recovery of the potential via quadratic nonliearities} and Proposition \ref{prop: uniqueness of $V$ via higher order nonlinearity}.
\end{proof}

\section{The $4$-wave linearization}
This section recovers $h_2$ and $h_4$ on $\D$. The proof closely follows \cite{LUW}, using the linearization via $4$ distorted plane waves.

\begin{proposition}\label{prop: recovery of h_2 and h_4}
  Given the condition of Theorem \ref{thm: main theorem}, 
  $$
    h^{(1)}_{2} = h_{2}^{(2)},\quad\text{and}\quad  h^{(1)}_{4} = h_{4}^{(2)}\quad\text{on }\D.
  $$
\end{proposition}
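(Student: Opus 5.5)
We will follow the four-wave scheme of \cite{KLU, LUW}. Since $h_1^{(1)}=h_1^{(2)}$ on $\D$ by the previous proposition, the linear solution operator $Q$ and the one-fold waves $u_{(j)}$ coincide for both nonlinearities. Fix $\c{y}\in\D\b\rotom$ and a returning null geodesic $\gamma_{\out}$ from $(\c{y},\c{\eta})$ to $(\c{z},\c{\zeta})$ as in Lemma \ref{lemma: broken light-like geodesics}. We then choose four distorted plane waves $u_{(1)},\dots,u_{(4)}$ from causally independent conormal sources in $\rotom$ whose covectors $\c{\xi}_{(j)}$ at $\c{y}$ are linearly independent, with $\c{\eta}=\sum_j\kappa_{(j)}\c{\xi}_{(j)}$ as in \cite{KLU}. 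Four-fold differentiation of the source-to-solution map gives $u_{(1234)}$ with
\[(\Box_g+h_1)u_{(1234)}=-24h_4u_{(1)}u_{(2)}u_{(3)}u_{(4)}-6h_3\sum u_{(i)}u_{(j)}u_{(kl)}-2h_2\sum u_{(ij)}u_{(kl)}-2h_2\sum u_{(i)}u_{(jkl)}.\]
Its data $u_{(1234)}|_{\rotom}$ coincide for $j=1,2$. Because $h_1,h_3$ are known, every term involving only $h_3$, $h_1$ and the $u_{(i)}$ is identical for both nonlinearities. The unknown data therefore reduce to a polynomial expression in $h_2(\c{y})$ and $h_4(\c{y})$.

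The key step is the principal symbol computation at $(\c{z},\c{\zeta})$ on the flowout $\Lambda_{(1234)}^g$ of $N^\ast(\cap_{j=1}^4 K_{(j)})$, exactly as in \cite[Prop.~3.7, Lemma 3.4]{LUW}. Using Proposition \ref{prop: Greenleaf-Uhlmann} and the fact that $Q$ is elliptic of order $-2$ at non-null sums $\c{\eta}_{(i)}+\c{\eta}_{(j)}$ and $\c{\eta}_{(i)}+\c{\eta}_{(j)}+\c{\eta}_{(k)}$, all terms have the same order. The principal symbol is then
\[\sigma[Q](\c{z},\c{\zeta},\c{y},\c{\eta})\prod_{j=1}^4\sigma[u_{(j)}](\c{y},\c{\eta}_{(j)})\Bigl(-24h_4+h_2^2\mathcal{G}_2+h_3\mathcal{G}_3+h_2^3\mathcal{G}_4\Bigr)(\c{y}),\]
where the $\mathcal{G}_k$ are explicit sums of products of $\sigma[\Box_g]^{-1}$ evaluated at partial sums of the $\c{\eta}_{(j)}$. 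These factors depend only on $g$ and on the chosen covectors. The $h_2^3$ term comes from $u_{(i)}u_{(jkl)}$, where $u_{(jkl)}$ contains $h_2u_{(j)}u_{(kl)}$. Equating the symbols gives
\[-24\,\delta h_4+\delta(h_2^2)\,\mathcal{G}_2+\delta(h_2^3)\,\mathcal{G}_4=0\]
for every admissible choice of covectors $\c{\xi}_{(j)}$ at the fixed point $\c{y}$.

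The main obstacle is separating the three unknowns $h_4$, $h_2^2$ and $h_2^3$. For this we will vary the covectors, for instance by the scaling family in \cite{LUW}, and show that $1$, $\mathcal{G}_2$ and $\mathcal{G}_4$ are linearly independent as functions of the free parameters. We will do this by examining their different asymptotic blow-up rates as two covectors coalesce, which makes a given $\sigma[\Box_g](\c{y},\c{\eta}_{(i)}+\c{\eta}_{(j)})\to0$ at different powers. If this independence computation becomes unwieldy, the fallback is to take the $\mathcal{G}_4$ coefficient first. Its independence yields $(h_2^{(1)})^3=(h_2^{(2)})^3$, hence $h_2^{(1)}=h_2^{(2)}$ since the $h_2$ are real. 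Then $h_4$ follows directly. This is consistent with \eqref{eqn: uniquness of the square of h_2}, which already fixes $|h_2|$ away from $\supp h_3$; the sign is what the $h_2^3$ term supplies. On $\supp h_3$, the $h_3$ term is known and drops out in the same way.

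Finally, these identities hold at all $\c{y}\in\D\b\rotom$. Equality in $\rotom$ is assumed, so $h_2^{(1)}=h_2^{(2)}$ and $h_4^{(1)}=h_4^{(2)}$ on $\D$, using continuity to pass to any degenerate points.
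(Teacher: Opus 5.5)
There is a genuine gap, and it sits exactly at the step you call the key step.

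\textbf{The monomials are misidentified.} In the four-fold source, each $h_k$-vertex merges $k$ waves into one. The total merging is $4-1=3=\sum(k-1)$, so only $h_4$, $h_2h_3$ and $h_2^3$ can occur. There is no $h_2^2$ term and no pure $h_3$ term. The $h_3$ contributions $h_3u_{(i)}u_{(j)}u_{(kl)}$ and $h_2u_{(i)}Q(h_3u_{(j)}u_{(k)}u_{(l)})$ always carry a factor $h_2$, since $u_{(kl)}=-2Q(h_2u_{(k)}u_{(l)})$. So on $\supp h_3$ this term is not known and does not drop out. It is in fact the term that carries $h_2$ there.

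\textbf{The claim that all terms have the same order is false.} It also contradicts your own observation that $Q$ is elliptic of order $-2$ at the non-null partial sums. Each inner $Q$ contributes a factor $\sigma[\Box_g]^{-1}$, homogeneous of degree $-2$ in the fibre variable. Hence the $h_2h_3$ terms are two orders below the $h_4$ term, and the $h_2^3$ terms four orders below. (In the quadratic setting of \cite{KLU}, every term has exactly two inner $Q$'s, which is why they are of equal order there.) Consequences:
\begin{itemize}
\item The principal symbol of $u_{(1234)}$ at $(\c{z},\c{\zeta})$ sees only $h_4$.
\item Your identity $-24\,\delta h_4+\dots=0$, and the linear-independence argument in the covectors, are not what the data give.
\item Your fallback of reading off the $h_2^3$ coefficient first is impossible, since it is the lowest-order contribution. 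Even after the $h_4$ part is removed, it is dominated by the $h_2h_3$ term wherever $h_3\neq 0$.
\end{itemize}

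\textbf{The missing idea is to peel off orders successively,} which is what the paper does via the propositions of \cite{LUW}.
\begin{enumerate}
\item Equality of the principal symbols of $u_{(1234)}$ at $(\c{z},\c{\zeta})$ gives $h_4^{(1)}(\c{y})=h_4^{(2)}(\c{y})$.
\item With $h_1$ and $h_4$ known, $\mathcal{V}_{(1234)}=-24Q(h_4u_{(1)}u_{(2)}u_{(3)}u_{(4)})$ is determined. Hence $\mathcal{W}^{(1)}_{(1234)}=\mathcal{W}^{(2)}_{(1234)}$, and one compares the principal symbols of $\mathcal{W}_{(1234)}$.
\item If $h_3\neq 0$ near $\c{y}$, the leading term of $\mathcal{W}_{(1234)}$ is $h_2h_3\mathcal{G}_2$. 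Dividing by the known nonzero $h_3(\c{y})$ gives $h_2(\c{y})$.
\item If $h_3\equiv 0$ near $\c{y}$, the leading term is $h_2^3\mathcal{G}_3$, and taking real cube roots gives $h_2(\c{y})$.
\item Points of $\supp h_3$ where $h_3$ vanishes are handled by continuity.
\end{enumerate}
Your remark that the cube supplies the sign left open by $|h_2|$ from the three-wave step is correct. However, it is available only off $\supp h_3$; on $\supp h_3$ the $h_2h_3$ term is what determines $h_2$.
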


By Lemma \ref{lemma: broken light-like geodesics}, there exists a triple $(\c{x},\c{y},\c{z})$ such that $\c{x},\c{z}\in \rotom$ and $\c{y}\in \D\b \rotom$, and that there exists a light-like geodesic connecting $\c{x},\c{y}$, and that there exists a light-like geodesic connecting $\c{y},\c{z}$. Moreover, these two light-like geodesics only intersect at $\c{y}$. 

Let $\c{x_{(1)}} :=\c{x}$. Then, we can choose $\c{x}_{(2)}$, $\c{x}_{(3)}$ and $\c{x}_{(4)}$ such that there exists a light like geodesic from $\c{x}_{(j)}$ to $\c{y}$ and that $\c{x}_{(j)}$ lies on the same Cauchy-hypersurface.

Denote $\e = (\e_{(1)},\e_{(2)},\e_{(3)},\e_{(4)}) \in \R^4$ with $|\e|$ small. Construct the source of the semi-linear wave equation \eqref{eqn : semilinear wave with power series} as
  \begin{equation}\label{eqn: source for four wave interactions}
    f = \e_{(1)}f_{(1)} + \e_{(2)}f_{(2)}+\e_{(3)}f_{(3)}+\e_{(4)}f_{(4)}.
  \end{equation}
  Each $f_{(j)}$ generates a distorted plane wave
  $$
     u_{(j)} \in I^{\mu}\l(N^{\ast}\{\c{x}_{(j)}\},N^{\ast}K_{(j)} \r), \quad\text{where}\quad \l(\Box_{g}+h_1\r)u_{(j)} = f_{(j)}.
  $$
  where $\c{x}_{(j)} \in \rotom$ lies on the same Cauchy surface $\{t\}\times N$. For all nonempty disjoint proper subsets $I,J \subseteq \{1,2,3,4\}$, $\cap_{i\in I} K_{(i)}$ and $\cap_{j\in J}K_{(j)}$ intersect transversally with codimension $4 - |I| - |J|$. In particular, 
  $$
   \mathop{\bigcap}_{j=1}^{4} K_{(j)} = \{\c{y}\}.
  $$

  Let $u(\e)$ be the solution of \eqref{eqn : semilinear wave with power series} with the source \eqref{eqn: source for four wave interactions}. By \cite[(3.8)]{LUW}, the four-fold linearization of $u(\e)$ reads
 \begin{align*}
      u_{(1234)} =& \l.\partial_{\e_{(1)}}\partial_{\e_{(2)}}\partial_{\e_{(3)}}\partial_{\e_{(4)}}u\r|_{\e = 0}\\
      =& -4\sum_{\sigma \in S(4)} Q\l(h_2 u_{(\sigma(1))}Q\l(h_2 u_{(\sigma(2))} Q\l( h_2 u_{(\sigma(3))} u_{(\sigma(4))}\r) \r) \r)\\
      &-\sum_{\sigma \in S(4)} Q\l(h_2Q\l(h_2 u_{(\sigma(1))}u_{(\sigma(2))}\r) Q\l(h_2 u_{\sigma(3)} u_{(\sigma(4))}\r) \r)\\
       &+2\sum_{\sigma\in S(4)} Q\l(h_2 u_{(\sigma(1))} Q\l(h_{3}u_{(\sigma(2))}u_{(\sigma(3))}u_{(\sigma(4))}\r) \r)\\
       &+3\sum_{\sigma\in S(4)}Q\l(h_3 u_{(\sigma(1))}u_{(\sigma(2))}Q\l(h_2 u_{(\sigma(3))}  u_{(\sigma(4))}\r) \r)\\
       &-24 Q\l(h_4 u_{(1)} u_{(2)} u _{(3)} u_{(4)}\r).
 \end{align*}
  For convenience,  we write
 \begin{equation}\label{eqn: decomposition of fourth order linearization}
  \mathcal{V}_{(1234)} := -24Q\l(h_4 u_{(1)}u_{(2)}u_{(3)}u_{(4)} \r),\quad\text{and}\quad\mathcal{W}_{(1234)} := u_{(1234)} - \mathcal{V}_{(1234)}.
 \end{equation}
 
Let $\Lambda_{\c{y}}$ be the shorthand of $T^\ast_{\c{y}} M$, and
 \begin{align*}
   \Theta^{(1)} &:= \bigcup_{j=1}^{4} N^{\ast}K_{(j)}, \\  \Theta^{(3)} &:=  \bigcup_{1\le i<j<k\le 4}N^{\ast}\l(K_{(i)}\cap K_{(j)}\cap K_{(k)}\r),\\
       \Xi &:= \Theta^{(1)} \cup \Theta^{(3),g}\cup \Lambda_{\c{y}}.
 \end{align*}
  Denote by $\Theta^{(3),g}$ and $\Lambda_{\c{y}}^{g}$ and the future flowout of $\Theta^{(3)}$ and $\Lambda_{\c{y}}$ respectively.  

To recover $h_2$ and $h_4$, we employ the following results from \cite[Proposition 3.11, 3.12]{LUW}.
\begin{proposition}\label{prop: first proposition of four-fold linearization}
Given $(\c{y},\c{\eta}) \in \Lambda_{\c{y}} $ with $\c{\eta}$ light-like and future-pointing, $\c{\eta}$ has a unique decomposition $\c{\eta} = \sum_{j=1}^{4}\c{\eta}_{(j)} $ such that $\c{\eta}_{(j)} \in N^{\ast}_{\c{y}}K_{(j)}\b 0$. For $(\c{z},\c{\zeta})$ on the bicharacteristic emanating from $(\c{y},\c{\eta})$, it holds that $u_{(1234)} \in I^{4\mu+3}\l( \Lambda_{\c{y}}^{g} \b\Xi \r)$ with principal symbol
       $$
        \sigma\l[ u_{(1234)}\r]\l( \c{z},\c{\zeta}\r) = -24(2\pi)^{-3}\sigma\l[Q\r]\l(\c{z},\c{\zeta},\c{y},\c{\eta} \r) h_{4}(\c{y})\prod_{j=1}^{4}\sigma\l[u_{(j)}\r]\l(\c{y},\c{\eta}_{(j)} \r).
       $$

\end{proposition}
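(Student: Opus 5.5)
The plan is to follow the four-wave interaction analysis of \cite{LUW}, adapted to the presence of $h_1$ (which only affects lower-order symbols).

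\textbf{Step 1: the covector decomposition.} The existence and uniqueness of $\c{\eta}=\sum_{j=1}^4\c{\eta}_{(j)}$ is pure linear algebra at $\c{y}$. Since $\cap_{j}K_{(j)}=\{\c{y}\}$ with transversal intersections, the lines $N^\ast_{\c{y}}K_{(j)}$ are spanned by four linearly independent light-like covectors $\c{\xi}_{(j)}$. These form a basis of $T^\ast_{\c{y}}M$, so the coefficients exist and are unique. Each coefficient is nonzero, because $\c{\eta}$ lies outside every $N^\ast_{\c{y}}$ of a triple or pairwise intersection. This uses the generic choice of $\c{x}_{(j)}$ and the fact that a light-like covector is not a combination of two independent light-like ones.

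\textbf{Step 2: localize $\mathcal{V}_{(1234)}$.} I split $u_{(1234)}=\mathcal V_{(1234)}+\mathcal W_{(1234)}$ as in \eqref{eqn: decomposition of fourth order linearization}. Iterating Proposition \ref{prop: Greenleaf-Uhlmann}, microlocally near $(\c{y},\c{\eta})$ and away from the lower-order interaction sets, the product $u_{(1)}u_{(2)}u_{(3)}u_{(4)}$ is conormal to the point $\{\c{y}\}$. It has order $4\mu+3$, and its principal symbol is $(2\pi)^{-3}\prod_j\sigma[u_{(j)}](\c{y},\c{\eta}_{(j)})$, the factor $(2\pi)^{-3}$ coming from the normalization of three successive products. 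Multiplying by $h_4$ and applying $Q\in I^{-3/2}(N^\ast\Delta,\Lambda_g)$ as in Proposition \ref{prop: linear wave} places $\mathcal V_{(1234)}$ in $I^{4\mu+3}(\Lambda_{\c{y}}^g\setminus\Xi)$ away from $\Lambda_{\c{y}}$. By \eqref{eqn: causal inverse as a linear operator}, its symbol at $(\c{z},\c{\zeta})$ is $-24\sigma[Q](\c{z},\c{\zeta},\c{y},\c{\eta})$ times that product. Along the way the Keller--Maslov factors are absorbed into $\sigma[Q]$ exactly as in the proof of Lemma \ref{lemma: microlocal structure of the higher order remiander wave of three fold-linearization}. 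By Remark \ref{eqn: principal symbol calculus in different languages}, the principal symbols $\sigma[u_{(j)}]$ do not see $h_1$.

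\textbf{Step 3: $\mathcal W_{(1234)}$ is lower order.} This is the main obstacle. I would show that $\mathcal W_{(1234)}$ is of strictly lower order on $\Lambda^g_{\c{y}}$ near $(\c{z},\c{\zeta})$, following \cite[Section 3]{LUW} term by term. In nested terms such as $Q(h_2u_{(\sigma(1))}Q(h_2u_{(\sigma(2))}Q(h_2u_{(\sigma(3))}u_{(\sigma(4))})))$, the inner factors are evaluated at covectors $\c{\eta}_{(i)}+\c{\eta}_{(j)}$ or $\c{\eta}_{(i)}+\c{\eta}_{(j)}+\c{\eta}_{(k)}$. At such covectors $Q$ acts elliptically of order $-2$, each gaining two orders, because these sums are not light-like by \cite[Lemma 27]{O} and the choice of $\c{\eta}$. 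Each such term is therefore in $I^{4\mu+1}$ or smaller near $(\c{y},\c{\eta})$ and then propagates. The delicate part is where an inner $Q$ hits a characteristic covector, i.e. singularities on $\Theta^{(3),g}$ or $N^\ast K_{(j)}$. These must be excluded by the cut-offs defining $\Xi$ and by checking that $\Gamma$ avoids $\Theta^{(1)}\cup\Theta^{(3),g}$; for this I would use the no-cut-point hypothesis on $\D$ and Lemma \ref{lemma: broken light-like geodesics}, which says the in and out geodesics meet only at $\c{y}$. The product estimates with paired Lagrangian factors (flowouts $\Theta^{(3),g}$) are the technical core, and I would take them from \cite[Lemmas 3.5--3.10]{LUW}, which are local and insensitive to the added zeroth-order potential $h_1$.

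\textbf{Step 4: conclusion.} Combining Steps 2 and 3 gives $u_{(1234)}\in I^{4\mu+3}(\Lambda^g_{\c{y}}\setminus\Xi)$ with the stated principal symbol.
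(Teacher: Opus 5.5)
Your plan is the argument the paper relies on. The paper does not prove this statement itself; it imports it, together with Proposition \ref{prop: second proposition of four-fold linearization} on $\mathcal{W}_{(1234)}$, from \cite[Propositions 3.11, 3.12]{LUW}. Your Steps 1--3 are that argument: a four-term null decomposition at $\c{y}$, Greenleaf--Uhlmann products followed by propagation under $Q$ for the $h_4$ term, and ellipticity of $Q$ at the non-null partial sums of the $\c{\eta}_{(j)}$ to push all other terms down. Here $h_1$ enters only through lower-order symbols.

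One bookkeeping point in Step 2 should be corrected. Proposition \ref{prop: linear wave} says a point source in $I^{\mu'+3/2}$ produces a wave in $I^{\mu'}$ on the flowout. So the source $h_4u_{(1)}u_{(2)}u_{(3)}u_{(4)}\in I^{4\mu+3}(N^\ast\{\c{y}\})$, taken microlocally near $(\c{y},\c{\eta})$, gives $\mathcal{V}_{(1234)}\in I^{4\mu+3/2}(\Lambda^g_{\c{y}}\setminus\Xi)$, not $I^{4\mu+3}$. Applying $Q$ costs $3/2$ orders, just as $3\mu+2\mapsto 3\mu+\frac12$ for $\mathcal{V}_{(123)}$.

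The $4\mu+3$ in the statement looks like a misprint. The next proposition gives $\mathcal{W}_{(1234)}$ orders $4\mu-\frac12$ and $4\mu-\frac52$. These are exactly what your $I^{4\mu+1}$ sources (and, for the $h_2^3$ terms, $I^{4\mu-1}$ sources) become after $Q$, i.e. two and four orders below $4\mu+\frac32$. The correction affects neither the symbol formula nor the conclusion that $\mathcal{W}_{(1234)}$ is subprincipal.

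Also, in Step 1 linear algebra rules out only the one- and two-term cases. Three-term null decompositions genuinely occur; they drive the three-wave analysis of Section 3. So $(\c{y},\c{\eta})\notin\Theta^{(3)}$ has to be imposed, which the restriction to the complement of $\Xi$ does. It cannot be derived from $\c{\eta}$ being light-like, and it does not follow automatically from a generic choice of the $\c{x}_{(j)}$ alone.
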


\begin{proposition}\label{prop: second proposition of four-fold linearization} In the same condition as Proposition \ref{prop: first proposition of four-fold linearization},
\begin{itemize}
    \item[(1)]
       If $h_3 \neq 0$ in a neiborhood of $\c{y}$, $\mathcal{W}_{(1234)}\in I^{4\mu - 1/2}\l(\Lambda_{\c{y}}^{g}\b\Xi \r)$ with principal symbol
       $$
          \sigma\l[ \mathcal{W}_{(1234)}\r](\c{z},\c{\zeta}) = (2\pi)^{-3}h_2(\c{y})h_3(\c{y})\mathcal{G}_{2}(\c{\eta}) \sigma\l[Q\r]\l(\c{z},\c{\zeta},\c{y},\c{\eta} \r)\prod_{j=1}^{4}\sigma\l[u_{(j)}\r]\l(\c{y},\c{\eta}_{(j)} \r),
       $$
       where $\mathcal{G}_{2}(\c{\eta})$ is a nonvanishing factor not related to $h_1$ and $H$. %with the expression
     %  $$
    %    \mathcal{G}_{3}(\c{\zeta}) = \sum_{\sigma\in S(4)}3\l|\c{\eta}_{(\sigma(1))}+\c{\eta}_{\sigma(2)}\r|^{-2}_{g(\c{y})} + 2\l|\c{\eta}_{(\sigma(2))}+\c{\eta}_{\sigma(3)} + \c{\eta}_{\sigma(4)}\r|^{-2}_{g(\c{y})}. 
    %   $$
\item[(2)]
        If $h_3 = 0$ in a neighborhood of $\c{y}$, it holds that $\mathcal{W}_{(1234)}\in I^{4\mu - 5/2}\l(\Lambda_{\c{y}}^{g}\b\Xi \r)$ with principal symbol
         $$
          \sigma\l[ \mathcal{W}_{(1234)}\r](\c{z},\c{\zeta}) = (2\pi)^{-3}h_{2}^{3}(\c{y})\mathcal{G}_{3}(\c{\eta}) \sigma\l[Q\r]\l(\c{z},\c{\zeta},\c{y},\c{\eta} \r)\prod_{j=1}^{4}\sigma\l[u_{(j)}\r]\l(\c{y},\c{\eta}_{(j)} \r),
       $$
        where $\mathcal{G}_{3}(\c{\eta})$ is a nonvanishing factor not related to $h_1$ and $H$.      % with the expression
     %  $$
      %  \mathcal{G}_{3}(\c{\zeta}) = \sum_{\sigma\in S(4)}\l(\l|\c{\eta}_{(\sigma(1))}+\c{\eta}_{\sigma(2)}\r|^{-2}_{g(\c{y})} + 4\l|\c{\eta}_{(\sigma(2))}+\c{\eta}_{\sigma(3)} + \c{\eta}_{\sigma(4)}\r|^{-2}_{g(\c{y})} \r)\l|\c{\eta}_{(\sigma(1))} + \c{\eta}_{(\sigma(2))} \r|^{-2}_{g(\c{y})}.
     %   $$
        \end{itemize}
\end{proposition}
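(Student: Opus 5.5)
We treat each term in the expansion of $u_{(1234)}$ separately and take a microlocal cut-off near $(\c{y},\c{\eta})$. In each term the inner solution operators $Q$ act at non-characteristic covectors, so they behave as elliptic pseudodifferential operators of order $-2$ with principal symbol $\sigma[\Box_g]^{-1}$. The outer product is then conormal to the point $\c{y}$. The outer $Q$ transports this singularity along the bicharacteristic to $(\c{z},\c{\zeta})$, contributing the factor $\sigma[Q](\c{z},\c{\zeta},\c{y},\c{\eta})$, exactly as in Proposition \ref{prop: first proposition of four-fold linearization} and Proposition \ref{prop: principal symbol of three wave interaction}. Away from $\c{y}$ we argue as in Lemmas \ref{lemma: first singularity analysis}--\ref{lemma: fourth singularity analysis}: by the wavefront set calculus \cite[Theorem 8.2.10]{H1} and propagation of singularities, the only place where the sources of $\mathcal{W}_{(1234)}$ meet the bicharacteristic through $(\c{z},\c{\zeta})$, outside $\Xi$, is $(\c{y},\c{\eta})$. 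Hence only the symbol at $(\c{y},\c{\eta})$ matters.

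\textbf{Step 1 (inner waves).} We use the unique decomposition $\c{\eta}=\sum_j\c{\eta}_{(j)}$. By \cite[Lemma 2.7]{O}, $\c{\eta}_{(i)}+\c{\eta}_{(j)}$ is not light-like. Likewise $\c{\eta}_{(i)}+\c{\eta}_{(j)}+\c{\eta}_{(k)}=\c{\eta}-\c{\eta}_{(l)}$ is a difference of two independent light-like covectors, hence not light-like. We then use cut-offs $\chi_{(ij)}$ and $\chi_{(ijk)}$ as in \eqref{eqn: property of the microlocal cut-off near y,eta_1+eta_2}. Proposition \ref{prop: Greenleaf-Uhlmann} together with the ellipticity of $Q$ off $\Char(\Box_g)$ (as in the proof of \eqref{eqn: order of the remainder wave of two-fold linearization}) gives
\begin{align*}
\chi_{(ij)}Q(h_2u_{(i)}u_{(j)})&\in I^{2\mu-1}\bigl(N^\ast(K_{(i)}\cap K_{(j)})\bigr),\\
\chi_{(ijk)}Q(h_3u_{(i)}u_{(j)}u_{(k)})&\in I^{3\mu-1/2}\bigl(N^\ast(K_{(i)}\cap K_{(j)}\cap K_{(k)})\bigr).
\end{align*}
The symbols are $h_2(\c{y})$, respectively $h_3(\c{y})$, times $\sigma[\Box_g]^{-1}$ at the summed covector, times the product of the $\sigma[u_{(j)}]$, up to the $(2\pi)$ factors of Proposition \ref{prop: Greenleaf-Uhlmann}. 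The parts microlocally away from these cut-offs are smooth near the relevant products, as in Lemma \ref{lemma: microlocal structure of F_123} via \cite{CLOP, KLOU}.

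\textbf{Step 2 (orders and case split).} Multiplying again by the remaining conormal waves and applying Proposition \ref{prop: Greenleaf-Uhlmann} yields sources in $I(\Lambda_{\c{y}})$. Each inner $Q$ costs two orders relative to the $h_4$-term. If $h_3\neq0$ near $\c{y}$, the two mixed families $Q(h_2u\,Q(h_3uuu))$ and $Q(h_3uu\,Q(h_2uu))$ involve one inner $Q$ and dominate. Their symbols sum to $h_2h_3$ times
\[
\mathcal{G}_2=\sum_{\sigma\in S(4)}\Bigl(3\sigma[\Box_g]^{-1}(\c{y},\c{\eta}_{(\sigma(1))}+\c{\eta}_{(\sigma(2))})+2\sigma[\Box_g]^{-1}(\c{y},\c{\eta}_{(\sigma(2))}+\c{\eta}_{(\sigma(3))}+\c{\eta}_{(\sigma(4))})\Bigr),
\]
times the common factor; the pure $h_2^3$ terms are two orders lower. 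If $h_3=0$ near $\c{y}$, only the $h_2^3$ terms remain, with two nested inner $Q$'s. Their symbols give $h_2^3(\c{y})\mathcal{G}_3$, where $\mathcal{G}_3$ is a sum over $S(4)$ of products of two inverse symbols $\sigma[\Box_g]^{-1}$ evaluated at the two- and three-fold sums (respectively at the two pair sums). Finally we apply $\sigma[Q](\c{z},\c{\zeta},\c{y},\c{\eta})$. Since $\sigma[\Box_g]$, $\sigma[Q]$ and $\sigma[u_{(j)}]$ do not involve $h_1$ or the $h_k$ (Remark \ref{eqn: principal symbol calculus in different languages}), neither do $\mathcal{G}_2$ and $\mathcal{G}_3$.

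\textbf{Main obstacle.} The hard step is showing $\mathcal{G}_2,\mathcal{G}_3\neq0$, since the sums could cancel. We would write $\sigma[\Box_g](\c{y},\zeta)=|\zeta|^2_{g}$ and use $|\c{\eta}_{(i)}+\c{\eta}_{(j)}|_g^2=2\langle\c{\eta}_{(i)},\c{\eta}_{(j)}\rangle_g$. This makes $\mathcal{G}_2$ and $\mathcal{G}_3$ explicit rational functions of the pairings $\langle\c{\eta}_{(i)},\c{\eta}_{(j)}\rangle_g$. We would then argue as in \cite{LUW}: exhibit a configuration of four independent light-like covectors where the function is nonzero, and invoke analyticity in the directions to get nonvanishing on an open dense set. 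Since the points $\c{x}_{(j)}$ can be perturbed freely within $\rotom$, we may choose the $\c{\eta}_{(j)}$ in that set.
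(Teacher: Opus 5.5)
Your proposal is correct and reconstructs the argument of \cite[Propositions 3.11--3.12]{LUW}, which the paper imports by citation rather than reproving: term-by-term symbol calculus with the inner $Q$'s acting as elliptic operators of order $-2$ at the non-characteristic covectors $\c{\eta}_{(i)}+\c{\eta}_{(j)}$ and $\c{\eta}-\c{\eta}_{(l)}$, an order count that gives the $h_2h_3$ versus $h_2^3$ dichotomy, and generic nonvanishing of $\mathcal{G}_2,\mathcal{G}_3$ via analyticity and perturbation of the points $\c{x}_{(j)}$. One small slip: with the paper's conventions, $\chi_{(ijk)}Q(h_3u_{(i)}u_{(j)}u_{(k)})$ lies in $I^{3\mu}\bigl(N^\ast(K_{(i)}\cap K_{(j)}\cap K_{(k)})\bigr)$, not $I^{3\mu-1/2}$ (it is its symbol that has order $3\mu-1/2$), and only with this corrected order do both mixed families sit exactly two orders below the $h_4$-term, as your Step 2 asserts.
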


Now we are in the position to reconstruct $h_2$ and $h_4$ completely.

\begin{proof}[Proof of Proposition \ref{prop: recovery of h_2 and h_4}] 

For $j=1,2$, let 
$$
  u^{(j)}_{(1234)} = \partial_{\e_{(1)}}\partial_{\e_{(2)}} \partial_{\e_{(3)}}\partial_{\e_{(4)}} \l(\l(L_{N^{(j)}}\l(\sum_{j=1}^{4}\e_{(j)}f_{(j)} \r)\r)\r|_{\e_{(1)} = \e_{(2)}=\e_{(3)}=\e_{(4)}=0}.
$$
The coincidence of the source-to-solution map leads to that
$$
  \sigma\l[u^{(1)}_{(1234)}\r](\c{z},\c{\zeta}) =  \sigma\l[u^{(2)}_{(1234)}\r](\c{z},\c{\zeta}). 
$$
Then we readily have 
$$
   h_{4}^{(1)}(\c{y}) = h^{(2)}_{4}(\c{y}),\quad \text{for all } \c{y}\in\D.
$$

With the knowledge of $ u_{(1)}, u_{(2)}, u_{(3)}, u_{(4)}$ and $h_{4}$,   $\mathcal{V}_{(1234)}$ in \eqref{eqn: decomposition of fourth order linearization} is uniquely determined. Then, $\mathcal{W}_{(1234)}$ is uniquely recovered by the source-to-solution map; namely, 
\begin{equation}\label{coincidence of W_1234}
 \mathcal{W}_{(1234)}^{(1)} = \mathcal{W}_{(1234)}^{(2)}.
\end{equation}

By \eqref{coincidence of W_1234} and Proposition \ref{prop: second proposition of four-fold linearization}, it holds that
$$
\sigma\l[\mathcal{W}_{(1234)}^{(1)}\r](\c{z},\c{\zeta}) = \sigma\l[\mathcal{W}_{(1234)}^{(2)}\r](\c{z},\c{\zeta}).
$$
If $h_{3}(\c{y}) \neq 0$, the first part of Proposition \ref{prop: second proposition of four-fold linearization} gives that
$$
   h^{(1)}_{2}(\c{y})h_{3}^{(1)}(\c{y}) = h_{2}^{(2)}(\c{y}) h_{3}^{(2)}(\c{y}).
$$
Since  $h_{3}^{(1)}(\c{y}) = h_{3}^{(2)}(\c{y})\neq0$, it follows that
$$
h^{(1)}_{2}(\c{y}) = h^{(2)}_{2}(\c{y}).
$$
If $\c{y} \in \supp(h_3)$ and $h_{3}(\c{y})=0$, we can choose a sequence in $\{h_{3} \neq 0\}$ converging to $\c{y}$ and get $h^{(1)}_{2}(\c{y}) = h^{(2)}_{2}(\c{y})$ by continuity.
If $h_{3} = 0$ in a neighborhood of $\c{y}$, the second part of Proposition \ref{prop: second proposition of four-fold linearization} gives that
$$
   \l(h^{(1)}_{2}(\c{y})\r)^{3} = \l(h_{2}^{(2)}(\c{y})\r)^{3}
$$
Then, $h_{2}^{(1)} = h_{2}^{(2)}$ away from the support of $h_3$.

\end{proof}

\section{The $m$-wave linearization (revisited)}\label{sec : Step 5}

We complete the proof by recovering the higher order terms, which is achieved by the linearization via $3$ distorted plane waves and $m-3$ Gaussian beams.

\begin{proposition}\label{prop: recovery of higher order nonlinearities}
  Given the condition of Theorem \ref{thm: main theorem}, for all $m\ge 5$
  $$
      h_{m}^{(1)} \equiv h_{m}^{(2)},\quad\text{on }\D.
  $$
\end{proposition}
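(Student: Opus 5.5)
We argue by induction on $m\ge 5$, assuming $h_1,h_2,h_3,h_4$ are already known on $\D$ (Propositions \ref{prop : 3 wave}, \ref{prop: uniqueness of $V$ via higher order nonlinearity}, \ref{prop: recovery of h_2 and h_4}), and that $h_k^{(1)}=h_k^{(2)}$ on $\D$ for $2\le k\le m-1$. Fix $\c{y}\in\D\b\rotom$ and the triple $(\c{x},\c{y},\c{z})$ from Lemma \ref{lemma: broken light-like geodesics}. We use the same source \eqref{eqn: source to recover higher order nonlinearities}: three distorted plane waves $f_{(1)},f_{(2)},f_{(3)}$ and $m-3$ copies of the Gaussian beam source $f_\lambda$. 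Then $u_{(12\cdots m)}$ solves \eqref{m-fold linearization}.

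\textbf{Step 1.} Since $h_1$ is known, all linear waves $u_{(j)}$ and $u_\lambda$ coincide for $N^{(1)}$ and $N^{(2)}$. The solution operator $Q$ also coincides. Inductively, every $u_{(\beta)}$ with $|\beta|<m$ is built from $Q$, $h_1$ and $h_2,\dots,h_{m-1}$ through \eqref{eqn: l-fold linearization}. These are all known, so every $u_{(\beta)}$ with $\beta\subsetneq\{1,\dots,m\}$ agrees for the two nonlinearities. Hence the lower sources $f_k$, $2\le k\le m-1$, in \eqref{definition of the m-linearized source} agree. Subtracting $Q(\sum_{k} f_k)$, which is identical for both problems, from the measured $u_{(12\cdots m)}$, the data determine
\[
\mathcal{V}_{(m)} := -m!\,Q\l(h_m u_{(1)}u_{(2)}u_{(3)}u_\lambda^{m-3}\r) \quad\text{in } \rotom .
\]
This step avoids any support assumption. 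That is the main difference from Proposition \ref{prop: uniqueness of $V$ via higher order nonlinearity}, where we had to impose \eqref{eqn: support condition} precisely because the lower $h_k$ were unknown there.

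\textbf{Step 2.} Because $u_\lambda$ is smooth, $h_m u_\lambda^{m-3}$ acts as a smooth coefficient. So, exactly as for $v_{(12\cdots m)}$ in \eqref{eqn: principal symbol of the m-fold linearization}, $\mathcal{V}_{(m)}$ is a paired Lagrangian distribution in $I^{3\mu+1/2}(\Lambda_{(123)},\Lambda^g_{(123)})$ away from $\cup_j N^\ast K_{(j)}$. Its principal symbol at $(\c{z},\c{\zeta})$ is
\[
-m!(2\pi)^{-2} h_m(\c{y})\, u_\lambda(\c{y})^{m-3}\,\sigma[Q](\c{z},\c{\zeta},\c{y},\c{\eta})\prod_{j=1}^3\sigma[u_{(j)}](\c{y},r^{-2}\kappa_{(j)}\c{\xi}_{(j)}).
\]
All factors other than $h_m(\c{y})$ are nonvanishing and common to both problems; this includes $u_\lambda(\c{y})=1+O(\lambda^{-1})\neq0$ for large $\lambda$, by \eqref{eqn: expansion of Gaussian beams at y}. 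Equating symbols therefore gives $h_m^{(1)}(\c{y})=h_m^{(2)}(\c{y})$ for every $\c{y}\in\D\b\rotom$. In $\rotom$ the two coefficients agree by hypothesis, so $h_m^{(1)}=h_m^{(2)}$ on $\D$. This closes the induction. If $h_m(\c{y})=0$, the symbol simply vanishes for both problems, which is consistent, so no continuity argument is needed.

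\textbf{Main obstacle.} The delicate point is Step 1: we must justify that $u_{(\beta)}$ and the $f_k$ are genuinely identical, and not merely equal modulo smoother terms. This follows because they are determined by the same recursion \eqref{eqn: l-fold linearization} with identical known coefficients. A second point to check is that taking the symbol of $\mathcal{V}_{(m)}$ at $(\c{z},\c{\zeta})$ is legitimate. This requires $(\c{z},\c{\zeta})$ to lie off $\cup_j N^\ast K_{(j)}$, which follows from \eqref{eqn: second wavefront set property}, and the microlocal product calculus of Proposition \ref{prop: Greenleaf-Uhlmann} with one smooth factor, which is immediate.
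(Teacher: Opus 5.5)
Your proposal is correct and follows the paper's argument. The paper also inducts on $m$ and observes that the remainder $R_m=\sum_{k=2}^{m-1}Q(f_k)$ is known once $h_1,\dots,h_{m-1}$ are known on $\D$. It then compares the principal symbols of the $h_m$-term at $(\c{z},\c{\zeta})$, using that $u_\lambda(\c{y})$ is common to both problems and nonvanishing. Your explicit subtraction of $R_m$ before taking symbols, and your remark that no continuity argument is needed, are if anything a slightly cleaner rendering of the same step.
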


\begin{proof}
Choose the source of the semi-linear wave equation as in \eqref{eqn: source to recover higher order nonlinearities}. Recall that $u_{(12\cdots m)}$ solves the equation \eqref{m-fold linearization}. In fact, 
$$
  u_{(12\cdots m)} = -m! Q\l( h_m u_{(1)}u_{(2)}u_{(3)}u_{\lambda}^{m-3}\r) + R_{m}\l(h_1,h_2,\dots, h_{m-1} \r),
$$
where 
$$
 R_{m}(h_1,h_2,\dots,h_{m-1}) = \sum_{k=2}^{m-1}Q(f_{k}),
$$
in which $f_k$ is defined in \eqref{definition of the m-linearized source} and $h_1,h_2,h_3,h_4$ is known on $\D$. Moreover, the finite speed of propagation gives that $\supp(u_{(j)}),\supp(u_{\lambda})\subseteq J^{+}(\rotom)$ and their values on $\D$ are uniquely determined by the source-to-solution map.

We recover $h_m$ with $m\ge 5$ by induction analogous to \cite[Section 3.4]{Hintz-Uhlmann-Zhai-IMRN}. Assume that $h_2,\dots,h_{m-1}$ is recovered. Hence, $R_{m}$ is known in $\D$ and is supported in $J^{+}(\rotom)$ by the finite speed of propagation. 
By Proposition \ref{prop: principal symbol of three wave interaction}, $u_{(12\cdots m)}$ is a conormal distribution on $I^{3\mu+1/2}\l(\Lambda_{(123)},\Lambda_{(123)}^{g}\r)$ away from $\cup_{j=1}^{3}N^{\ast}K_{(j)}$ with principal symbol
\begin{align*}
  \sigma\l[u_{(12\cdots m)} \r](\c{z},\c{\zeta}) =& -m!(2\pi)^{-2} h_{m}(\c{y}) u_{\lambda}^{m-3}(\c{y})\sigma[Q](\c{z},\c{\zeta},\c{y},\c{\eta}) \\
  &\times\prod_{j=1}^{3}\sigma\l[u_{(j)}\r](\c{y},r^{-2}\kappa_{(j)}\c{\xi}_{(j)}). 
\end{align*}

 For two independent measurement $L_{N^{(1)}}$ and $L_{N^{(2)}}$, we have

$$
 \sigma\l[u^{(1)}_{(12\cdots m)} \r](\c{z},\c{\zeta}) = \sigma\l[u^{(2)}_{(12\cdots m)} \r](\c{z},\c{\zeta}).
$$
Then, we have
$$
  h_{m}^{(1)} = h_{m}^{(2)}.
$$
 since $u_{\lambda}(\c{y})$ is known and nonvanishing.
\end{proof}

The proof of Theorem \ref{thm: main theorem} consists of the proofs of Proposition \ref{prop : 3 wave}, Proposition \ref{prop: uniqueness of $V$ via higher order nonlinearity}, Proposition \ref{prop: recovery of h_2 and h_4} and Proposition \ref{prop: recovery of higher order nonlinearities}.

\bigskip

\noindent {\bf Acknowledgements.}  The authors were supported in part by Natural Science Foundation of Shanghai 23JC1400501. 
Part of the work was done during X.C.’s visit to Research Center for Mathematics of Data (MoD) at
Friedrich-Alexander-Universit\"{a}t Erlangen-N\"{u}rnberg under the support of  NSFC-DFG Sino-German Mobility Programme M-0548. R. Z. thanks Professor Lauri Oksanen for discussions on Lorentzian geometry and inverse problems for nonlinear wave equations with higher-order nonlinearities.

\bigskip	\noindent {\bf Data Availability Statement.} Data sharing not applicable to this article as no datasets were generated or analysed during the current study.

\bigskip	\noindent {\bf Conflict of Interest.} The authors have no conflicts of interest to declare that are relevant to the content of this article.

\bibliographystyle{abbrv}
\bibliography{reference}

\end{document}